\newtheorem{thm}{Theorem}[section]
\newtheorem{dfn}[thm]{Definition}
\newtheorem{prp}[thm]{Proposition}
\newtheorem{conj}[thm]{Conjecture}
\newtheorem{lmm}[thm]{Lemma}
\newtheorem{crl}[thm]{Corollary}
\newtheorem{exa}[thm]{Example}
\newtheorem{rmk}[thm]{Remark}
\numberwithin{equation}{section}
\theoremstyle{remark}
\newcommand{\Hol}{\mbox{{\rm Hol}}}
\newcommand{\Alg}{\mbox{{\rm Alg}}}
\newcommand{\Z}{\Bbb Z}
\newcommand{\C}{\Bbb C}
\newcommand{\R}{\Bbb R}
\newcommand{\T}{\Bbb T}
\renewcommand{\P}{{\rm P}}
\newcommand{\RP}{\Bbb R\mbox{{\rm P}}}
\newcommand{\Map}{\mbox{{\rm Map}}}
\newcommand{\CP}{\Bbb C {\rm P}}
\newcommand{\dis}{\displaystyle}
\newcommand{\p}{\prime}
\newcommand{\E}{\tilde{E}}
\newcommand{\XD}{X^{\Delta}}
\newcommand{\SZ}{{\mathcal{X}}^{D}}
\newcommand{\SZd}{{\mathcal{X}}^{D+\textbf{\textit{a}}}}
\newcommand{\I}{\mbox{{\rm (i)}}}
\newcommand{\II}{\mbox{{\rm (ii)}}}
\newcommand{\III}{\mbox{{\rm (iii)}}}
\newcommand{\IV}{\mbox{{\rm (iv)}}}
\newcommand{\XS}{X_{\Sigma}}
\newcommand{\ZS}{Z_{\Sigma}}
\newcommand{\GS}{G_{\Sigma}}
\newcommand{\n}{{\bf n}}
\newcommand{\db}{d_{\rm min}}
\newcommand{\rmin}{r_{\rm min}}
\begin{document}

\title{Spaces of algebraic maps from real projective spaces to toric varieties}

\author{Andrzej  \textsc{Kozlowski},
Masahiro \textsc{Ohno} and
Kohhei \textsc{Yamaguchi}\footnote{%
The second author and third  author
were supported by 
JSPS KAKENHI Grant Number
22540043, 23540079 and 26400083.}}

\classification{
Primary 55R80; Secondly 55P10, 55P35, 14M25}

\keyword{
Toric variety, fan, rational polyhedral cone, homogenous coordinate,
primitive element,
simplicial resolution, algebraic map,
Vassiliev spectral sequence%
}

\label{startpage}

\maketitle

\abstract{
The problem of approximating the  infinite dimensional space of all continuous maps from an algebraic variety $X$ to an
 algebraic variety  $Y$ by finite dimensional spaces of 
algebraic maps  arises in several areas of geometry and mathematical physics.
An often considered formulation of the problem (sometimes called the Atiyah-Jones problem after 
\cite{AJ}) is to determine a (preferably optimal) integer $n_D$ such that the inclusion from this finite dimensional algebraic space
into the corresponding infinite dimensional one induces isomorphisms of homology
(or homotopy) groups through dimension $n_D$, where $D$ denotes a tuple of integers called the \lq\lq degree\rq\rq
of the algebraic maps and $n_D\to\infty$ as $D\to\infty$.
In this paper we  investigate this problem in the case when
$X$ is a 
real projective space  and $Y$ is a  
smooth compact 
toric variety.
}

\section{Introduction.}\label{section 0}

Let $X$ and $Y$ be manifolds with some additional structure ${\cal S}$, e.g. holomorphic, symplectic, real algebraic etc. 
Let ${\cal S}(X,Y)$ denote the space of base-point preserving continuous maps
$f:X\to Y$ 
preserving the structure ${\cal S}$
and let $\Map^* (X,Y)$ be the space of corresponding continuous maps.
The relation between the topology of the spaces ${\cal S}(X,Y)$
and  
$\Map^* (X,Y)$ 
has long been an object of study in several areas of topology and geometry (e.g. \cite{BHM}, \cite{CJS}, \cite{Gu2}, 
\cite{GKY2}, \cite{KY1}, \cite{KY3}, \cite{KY4}, \cite{KY5}, \cite{Mo2}, 
\cite{Mo3}, \cite{Se}). 
In particular, in \cite{Mo2} and \cite{Mo3}
J. Mostovoy considered the case where the structure ${\cal S}$ is that of a complex manifold, and determined an integer $n_D$ such that
the inclusion map
$j_D:\Hol_D^*(X,Y) \to \Map_D^*(X,Y)$ induces  isomorphisms of homology groups
through dimension $n_D$ 
for complex projective spaces $X$ and $Y$,
where
$\Hol_D^*(X,Y)$ (resp. $\Map_D^*(X,Y)$) denotes the space of 
base-point preserving holomorphic (reps. continuous) maps 
from $X$ to $Y$ of degree $D$.
\par
In \cite{AKY1} and \cite{KY4}  the case where the structure ${\cal S}$ is that of a real algebraic variety was considered,
and integers  $n_D$ were found, such that
the natural projection map $i_D:A_D(X,Y)\to \Map_D^*(X,Y)$ induces
isomorphisms of homology groups through dimension $n_D$
for real projective spaces $X$ and $Y$, where
$A_D (X,Y)$ is a  space of  tuples of polynomials representing  the elements  of 
$\Alg_D^* (X,Y)$ - the space of base-point preserving algebraic   (regular) maps from $X$ to $Y$ of degree $D$
 (we will refer to $A_D (X,Y)$ as an \lq\lq algebraic approximation\rq\rq\  to the mapping space $\Map_D^*(X,Y)$).
\par
Recently Mostovoy and Munguia-Millanueva  generalized Mostovoy's earlier result 
\cite{Mo3} to the case of holomorphic maps  
from a complex projective space $\CP^m$ to
a compact smooth toric variety $\XS$ in \cite{MV},
where $\XS$ denotes the toric variety associated to a fan $\Sigma$.
\par\vspace{1mm}\par
In this paper, we study a real analogue of this result \cite{MV}
(a different kind of analogue will be studied in the subsequent paper \cite{KOY2}).
More precisely, our original aim was to generalize the  results of \cite{KY5} to the spaces of algebraic maps from a real projective space $\RP^m$ 
to a compact smooth toric variety $\XS$.%
\footnote{%
Note that an algebraic map from a real variety $V$ to a complex variety $W$
is a morphism defined on an open dense subset of the complexification $V_{\C}$
of $V$ which contains all the $\R$-valued points of $V$.
}
Although our  approach is based on  the original ideas of Mostovoy (\cite{Mo2}, \cite{Mo3}), the real case presents special difficulties. 
For example,
what we call here an algebraic map from $\RP^m$ to $\XS$ is defined on some open dense subset
of $\CP^m$, and it cannot be extended to the entire complexification 
$\CP^m$ in general.
Moreover, there is another difference between the complex case and the real one, which is the source of a greater difficulty. 
Namely, in the complex case, 
base-point preserving algebraic (equivalently holomorphic) maps from 
$\CP^m$ to 
$\XS$ are determined by $r$-tuples of of homogenous polynomials 
taking values outside a certain subvariety of $\C^r$, 
where $r$ is the number of the Cox's homogenous coordinates of $\XS$.
\par
On the other hand, in the real case, such
$r$-tuples of  polynomials determine algebraic maps only up to multiplication by certain positive valued homogenous polynomial functions.
This means that base-point preserving algebraic maps from $\RP^m$ to $\XS$
cannot be uniquely represented (in homogeneous coordinates) by $r$-tuples of polynomials with 
with a fixed \lq\lq degree\rq\rq\
since we can multiply any $r$-tuple by a positive valued homogeneous polynomial to obtain another tuple representing the same algebraic map.
While an algebraic map from $\RP^m$ to $\XS$ has a uniquely defined  \lq\lq minimal degree\rq\rq\ 
and the space of algebraic maps with a fixed minimal degree can be described in terms of $r$-tuples of homogenous polynomials of the same degree,  
the topology of the space of algebraic maps with a fixed minimal degree  is very complicated to analyze and
we shall not make here  any use of this concept 
(besides defining it).%
\footnote{%
See Proposition \ref{prp: the minimal degree} and Definition \ref{dfn: degree}.}
\par\vspace{1mm}\par
\par
Now, let $i_D:A_D(m,\XS)\to \Map^*(\RP^m,\XS)$
denote the natural map 
given by sending
$r$-tuple of polynomials to its representing 
algebraic map,
and
consider the natural surjective projection
$\Psi_D:A_D(m,\XS) \to \Alg_D^*(\RP^m,\XS)=i_D(A_D(m,\XS))$
on its image.
Because any fibre of $\Psi_D$ is contractible,
if we could prove that this map is a quasi-fibration, it would imply that it is a homotopy equivalence and we could then just imitate the method of \cite{MV}. 
 Unfortunately, it seems  difficult to prove that this map is a quasi-fibration or satisfies some other condition that leads to the conclusion that it is a
 homotopy equivalence.
\par
To get around this problem we adopt the approach used in \cite{AKY1}. 
Namely, 
we restrict ourselves to considering only the spaces of $r$-tuples of polynomials which represent algebraic maps rather than the spaces of algebraic maps themselves.  These will be our finite dimensional approximations to the space of all continuous maps. 
(We conjecture that both kinds of algebraic approximations, by tuples of polynomials and by algebraic maps which they determine,  are homotopy equivalent). 
Because we cannot work with spaces of algebraic maps we cannot make use of the real analogue of 
 \cite[Prop. 3]{MV} (although such an analogue can be proved by a similar method).
\par
Instead, we prove a similar theorem (Theorem \ref{thm: AKY1-stable}), where in place of the stabilized space of algebraic maps we use the stabilized space of tuples of polynomials representing them and obtain a homology equivalence instead of a homotopy one with the space of all continuous maps. 
Our method of proof is more complicated since we have to rely on the spectral sequence constructed by Vassiliev \cite{Va} for computing the homology of the space of continuous maps from a $q$-dimensional CW complex to a $q$-connected one 
(see \S \ref{section: Vassiliev} for more details).%
\par\vspace{1mm}\par
More precisely, our proof makes use of three spectral sequences corresponding  to three kinds of simplicial resolutions of discriminants;  the first one defined by Vassiliev, the other two, its variants due to Mostovoy.
\par  
The first resolution, called by Vassiliev 
\lq\lq the simplicial resolution\rq\rq\  (and  in this paper the \lq\lq non-degenerate simplicial resolution\rq\rq\  to distinguish it from the other two),  has  a $(k-1)$-simplex as its fibre
whenever the inverse image 
consists
of $k$ points. Vassiliev used this resolution to construct a spectral sequence for computing  cohomology of  spaces of continuous mappings.
The same kind of resolution and a corresponding spectral sequence can be defined for the space of algebraic maps and clearly there is a natural map  between the resolutions which induces a homomorphism between the spectral sequences. However, this is not enough to prove our result as the spectral sequence has too many non-vanishing terms, due to the fact that the values of an algebraic map of fixed degree at different points are not necessarily independent. 
Mostovoy's idea in \cite{Mo2} was to use a degenerate simplicial resolution (which we have decided to name 
{\it the Veronese resolution} because it it based on the use of a Veronese-like embedding). This resolution collapses some of the fibres of the non-degenerate resolution and makes the corresponding terms 0.  The problem with the Veronese resolution is that it does not map into the Vassiliev's non-degenerate simplicial resolution of the space of continuous maps. However, by combining the two resolutions we can prove  Theorem \ref{thm: AKY1-stable}. 
\par
It may be worth noting that  by using only these two resolutions we can prove Theorem \ref{thm: AKY1}, which differs from our main Theorem \ref{thm: I} only in giving a worse stabilization dimension. To obtain a stronger result we turn to another idea of Mostovoy: that of the (non-degenerate) simplicial resolution truncated after some term. 
\par
The basic idea behind the proofs is as follows. We want to compare the topology of two successive algebraic approximations. We make use of the fact that these spaces are complements of discriminants in affine spaces and their topology can be related to that of discriminants by Alexander duality.  
To compare the homology we resolve the singularities of 
both spaces using the non-degenerate simplicial resolution.  
The non-degenerate resolution has a natural filtration, 
the $k$-th term of which is the union of the $(k-1)$-th skeleta of the simplices in the fibres of the non-degenerate resolution. 
The first few terms of the filtration are easy to describe but after a certain dimension 
they become intractable. 
To deal with this problem we truncate the resolution after this dimension, 
by taking only skeleta of lower dimensions 
and collapsing non-contractible fibres (thus obtaining again a space homotopy equivalent to the discriminant). This truncated resolution inherits a filtration, which turns out to be much more manageable. 
  A comparison of these truncated resolutions and their spectral sequences gives  our main result.  
 \par\vspace{1mm}\par
This paper is organized as follows.
In \S \ref{section 1}, we define the basic notions of toric varieties and state our main results. 
In \S \ref{section 2}, we recall  the definitions of  the various simplicial resolutions used in this paper.
In \S \ref{section 3} we study the spectral sequence induced 
from the
non-degenerate simplicial resolution of  discriminants,
and in \S \ref{section 4} we prove our key stability result (Theorem \ref{thm: sd}).
In \S \ref{section: polyhedral product} we recall the elementary results of polyhedral
products and investigate the connectivity of the complement $\C^r\setminus \ZS$.
In \S \ref{section: main result}, we give the proofs of Theorem \ref{thm: AKY1-stable}
and the main result (Theorem \ref{thm: I}) by using Theorem \ref{thm: AKY1}. 
In \S \ref{section: Vassiliev} we prove Theorem \ref{thm: AKY1} by using
the Vassiliev spectral sequence \cite{Va} and that induced from the Veronese (degenerate) resolution. 
In \S \ref{section: Appendix} we describe  some
non-trivial facts, define the minimal degree of algebraic maps, and consider the problem concerning the relationship between  spaces of polynomial tuples and the spaces of algebraic maps induced by them. 

\section{Toric varieties and the main results.}\label{section 1}

\subsection{Toric varieties and polynomials representing
algebraic maps.}\label{section 1.1}
\paragraph{Toric varieties. }
An irreducible normal algebraic 
variety $X$ (over $\C$) is called {\it a toric variety} if
it has an algebraic action of algebraic torus $\T^r=(\C^*)^r$,
such that the orbit $\T^r\cdot *$ of some point $*\in X$
is dense in $X$ and isomorphic to $\T^r$.
{\it A strong convex rational polyhedral cone} $\sigma$ in $\R^n$ is a subset
of $\R^n$ of the form
$\sigma =\mbox{Cone}(\{{\bf n}_k\}_{k=1}^s)=
\{\sum_{k=1}^s a_k{\bf n}_k\vert a_k\geq 0\},$
where
$\{{\bf n}_k\}_{k=1}^s\subset \Z^n$, which
does not contain any line.
A finite collection $\Sigma$ of strongly convex rational polyhedral cones
in $\R^n$ is called {\it a fan}
if every face of an element of $\Sigma$ belongs to $\Sigma$ and
the intersection of any two elements of $\Sigma$ is a face of each.
\par
It is known that
a toric variety $X$ 
is completely characterized up to isomorphism by its fan
$\Sigma$. 
We denote by $\XS$
the toric variety associated to $\Sigma$.
\par
The dimension of a cone $\sigma$ is the minimal dimension of subspaces $W\subset \R^n$
such that $\sigma\subset W$.
A cone $\sigma$ in $\R^n$ is called {\it smooth} (reps. {\it simplicial})
if it is generated by a subset of a basis of
$\Z^n$ (resp. a subset of a basis of $\R^n$).
\par
A fan $\Sigma$ is called {\it smooth} (resp. {\it simplicial}) if every cone in $\Sigma$ is smooth
(resp. simplicial).
A fan is called {\it complete} if $\bigcup_{\sigma\in\Sigma}\sigma=\R^n$.
Note that $X_{\Sigma}$ is compact if and only if $\Sigma$ is complete
\cite[Theorem 3.4.1]{CLS}, and that
$\XS$ is a smooth toric variety if and only if $\Sigma$ is smooth
\cite[Theorem 1.3.12]{CLS}.
It is also known that $\pi_1(\XS)$ is isomorphic to the quotient of $\Z^n$
by the subgroup generated by $\bigcup_{\sigma \in \Sigma}\sigma \cap \Z^n$
\cite[Theorem 12.1.10]{CLS}.
In particular, if $\XS$ is compact, $\XS$ is simply connected.
\par

\paragraph{Homogenous coordinates on toric varieties.}

We shall use the symbols $\{z_k\}_{k=1}^r$ to denote variables of polynomials.
For $f_1,\cdots ,f_s\in \C [z_1,\cdots ,z_r],$
let $V(f_1,\cdots ,f_s)$ denote the affine variety
$
V(f_1,\cdots ,f_s)=\{\textbf{\textit{x}}  \in\C^r\ \vert \ 
f_k(\textbf{\textit{x}})=0\mbox{ for each }1\leq k\leq s\}
$
given by the polynomial equations
$f_1=\cdots =f_s=0$.
\par\vspace{2mm}\par
Let $\Sigma (1)=\{\rho_1,\cdots ,\rho_r\}$ denote the set of all
one dimensional cones ({\it rays}) in a fan $\Sigma$,
and for each $1\leq k\leq r$
let ${\bf n}_k\in \Z^n$ denote the generator of $\rho_k\cap \Z^n$
(called {\it the primitive element} of $\rho_k$)
such that $\rho_k\cap \Z^n=\Z_{\geq 0}\cdot {\bf n}_k$. 
Let $\ZS\subset \C^r$ denote the affine variety defined by
\begin{equation}\label{eq: ZS}
\ZS =V(z^{\hat{\sigma}}\vert \sigma\in \Sigma),
\end{equation}
where
$z^{\hat{\sigma}}$ denotes the monomial
\begin{equation}\label{zs}
z^{\hat{\sigma}}=
\prod_{1\leq k\leq r,\n_k\not\in\sigma}z_k
\in \Z [z_1,\cdots ,z_r]
\quad\quad
(\sigma\in \Sigma).
\end{equation}
\par
Let $\GS\subset \T^r$ denote the subgroup defined by
\begin{equation}\label{GS}
\GS =\{(\mu_1,\cdots ,\mu_r)\in \T^r\ \vert \ 
\prod_{k=1}^r\mu_k^{\langle {\bf m},\n_k\rangle}=1
\mbox{ for all }{\bf m}\in\Z^n\},
\end{equation}
where we set
$\langle \textbf{\textit{x}},\textbf{\textit{y}}\rangle =\sum_{k=1}^n x_ky_k$
for
$\textbf{\textit{x}}=(x_1,\cdots ,x_n),\ \textbf{\textit{y}} =(y_1,\cdots ,y_n)\in \R^n$.
\par
\par\vspace{2mm}\par
We say that a set of primitive elements
$\{{\bf n}_{i_1},\cdots ,{\bf n}_{i_s}\}$ is  
{\it primitive} 
if they do not lie in any cone in $\Sigma$ but every
proper subset has this property. 
It is known (\cite[Prop. 5.1.6]{CLS}) that
\begin{equation}\label{pri}
\ZS =\bigcup_{\{{\bf n}_{i_1},\cdots ,{\bf n}_{i_s}\}
: \ {\rm primitive}}
V(z_{i_1},\cdots ,z_{i_s}).
\end{equation}
Note that $\ZS$ is a closed variety of real dimension $2(r-\rmin )$,
where we set
\begin{equation}\label{dim rmin}
r_{\rm min}=\min \big\{s\in \Z_{\geq 1}\ \vert \ 
\{{\bf n}_{i_1},\cdots ,{\bf n}_{i_s}\}
\mbox{ is primitive}\big\}.
\end{equation}
It is known  that if the set $\{\n_1,\cdots ,\n_r\}$ spans $\R^n$
and $\XS$ is smooth, there is an isomorphism 
(\cite[Theorem 5.1.11]{CLS}, \cite[Prop. 6.7]{Pa1})
\begin{equation}\label{hs}
\XS \cong (\C^r\setminus \ZS)/\GS ,%
\end{equation}
where 
the group $\GS$ acts freely on the complement $\C^r\setminus \ZS$ by coordinate-wise multiplication. 

\begin{exa}\label{exa: Hirzebruch}
{\rm
For each $k\in \Z$, let $H(k)$ denote {\it the Hirzebruch surface} given by
$
H(k)=\{([x_0:x_1:x_2],[y_1:y_2])\in\CP^2\times\CP^1\ \vert \ x_1y_1^k=x_2y_2^k\}
\subset \CP^2\times\CP^1.
$
Note that there are isomorphisms 
$H(0)\cong \CP^1\times\CP^1$ and $H(k)\cong H(-k)$ for $k\geq 1$. 
Let  $\Sigma$ denote the fan in $\R^2$ given by
$$
\big\{\mbox{Cone}({\bf n}_1,{\bf n}_2),\mbox{Cone}({\bf n}_2,{\bf n}_3),
\mbox{Cone}({\bf n}_3,{\bf n}_4),\mbox{Cone}({\bf n}_4,{\bf n}_1),
\R_{\geq 0}\cdot {\bf n}_j\ (1\leq j\leq 4),\ {\bf 0}\big\},
$$ 
where
${\bf n}_1=(1,0)$, ${\bf n}_2=(0,1)$, ${\bf n}_3=(-1,k)$ and
${\bf n}_4=(0,-1)$.
Then we can  easily see that $H(k)=X_{\Sigma}$ \cite[Example 3.1.16]{CLS}.
Since $\{{\bf n}_1,{\bf n}_3\}$ and $\{{\bf n}_2,{\bf n}_4\}$ are primitive, $\rmin =2$.
Moreover,
by using (\ref{GS}), (\ref{pri}) and (\ref{hs})
we also obtain the isomorphism
\begin{equation}\label{H(k)}
H(k)\cong
\{(y_1,y_2,y_3,y_4)\in\C^4\ \vert \ (y_1,y_3)\not= (0,0),(y_2,y_4)\not= (0,0)\}/\GS,
\end{equation}
where $\GS =\{(\mu_1,\mu_2,\mu_1,\mu_1^k\mu_2)\ \vert \ \mu_1,\mu_2\in \C^*\}\cong \T^2.$
\qed
}
\end{exa}

\paragraph{Spaces of mappings. }
For connected spaces $X$ and $Y$,
let $\Map (X,Y)$ be the space of all
continuous maps $f:X\to Y$ and  
 $\Map^* (X,Y)$  the corresponding subspace
 of all based continuous maps.
If $m\geq 2$ and $g\in \Map^* (\RP^{m-1},X)$, let
$F(\RP^m,X;g)$ denote the subspace of $\Map^*(\RP^m,X)$ given by
\begin{equation*}
F(\RP^m,X;g)=\{f\in \Map^*(\RP^m,X)\ \vert \ \  f\vert \RP^{m-1}=g\},
\end{equation*}
where we identify $\RP^{m-1}\subset \RP^m$ by putting
$x_m=0$.
It is known that there is a homotopy equivalence
$F(\RP^m,X;g)\simeq \Omega^mX$ if it is not an empty set
(see Lemma \ref{lmm: A1}).

\paragraph{Assumptions. }
From now on, we adopt the following notational conventions and two assumptions:
\begin{enumerate}
\item[(\ref{H(k)}.1)]
Let
$\Sigma$ be a complete smooth fan in $\R^n$,  
$\Sigma (1)=\{\rho_1,\cdots ,\rho_r\}$ the set of all one-dimensional cones in $\Sigma$, and suppose that the primitive elements
$\{{\bf n}_1,\cdots ,{\bf n}_r\}$  span
$\R^n$, where
${\bf n}_k\in \Z^n$ denotes the primitive element of
$\rho_k$ for $1\leq k\leq r$.
\item[(\ref{H(k)}.2)]
Let
$D=(d_1,\cdots ,d_r)$ be an $r$-tuple of integers
such that $\sum_{k=1}^rd_k{\bf n}_k={\bf 0}$.
\end{enumerate}
\par\vspace{2mm}\par
Then, by (\ref{H(k)}.1) we can make the identification
$\XS =(\C^r\setminus \ZS)/\GS$ as in (\ref{hs}).
For each $(a_1,\cdots ,a_r)\in \C^r\setminus \ZS$, we denote by
$[a_1,\cdots,a_r]$ the corresponding element of $\XS$.

\paragraph{Spaces of polynomials.}
Let ${\cal H}_{d,m}\subset \C [z_0,\ldots ,z_m]$
denote the space of global sections
$H^{0}(\CP^m,{\cal O}_{\CP^m}(d))$ of the 
line bundle
${\cal O}_{\CP^m}(d)$ of degree $d$.
Note that
the space
${\cal H}_{d,m}\subset \C[z_0,\cdots ,z_m]$ coincides with
the subspace consisting of all
homogeneous polynomials of degree $d$ if $d\geq 0$ and that
${\cal H}_{d,m}=0$ if $d<0$.%
\footnote{This is because $H^{0}(\CP^m,{\cal O}_{\CP^m}(d))=0$ if $d<0$.}
For each $r$-tuple $D=(d_1,\cdots ,d_r)\in \Z^r$,
let $A_D(m)$ denote the space
\begin{equation}\label{AD}
A_D(m)={\cal H}_{d_1,m}\times {\cal H}_{d_2,m}\times \cdots \times
{\cal H}_{d_r,m}
\end{equation}
and
let $A_{D,\Sigma}(m)\subset A_D(m)$ denote the subspace defined by
\begin{equation}\label{ADSigma}
A_{D,\Sigma}(m)=\big\{(f_1,\cdots ,f_r)\in A_D(m)\ \vert \
F(\textbf{\textit{x}})
\notin \ZS
\mbox{ for any }\textbf{\textit{x}}\in \R^{m+1}\setminus
\{{\bf 0}\}\big\},
\end{equation}
where we  write $F(\textbf{\textit{x}})=(f_1(\textbf{\textit{x}}),\cdots,
f_r(\textbf{\textit{x}}))$
for $\textbf{\textit{x}}\in \R^{m+1}\setminus \{{\bf 0}\}$.
\par\vspace{1mm}\par

Next, we 
define a map
$j_D^{\p}:A_{D,\Sigma}(m) \to \Map (\RP^m,\XS)$ by the formula
\begin{equation}\label{def of j1D}
j_D^{\p}(f_1,\cdots ,f_r)([\textbf{\textit{x}}])=
[f_1(\textbf{\textit{x}}),\cdots ,f_r(\textbf{\textit{x}})]
\quad
\mbox{for }\textbf{\textit{x}}\in \R^{m+1}\setminus\{{\bf 0}\}.%
\end{equation}
\begin{rmk}\label{lmm: A2}
{\rm
Note that
the map 
$j_D^{\p}$
is well defined 
because $\sum_{k=1}^rd_k{\bf n}_k={\bf 0}$.
In fact,
if $\lambda\in \R^*$,
then since
$\prod_{k=1}^r (\lambda^{d_k})^{\langle {\bf m},{\bf n}_k\rangle}=
\lambda^{\langle {\bf m},\sum_{k=1}^rd_k{\bf n}_k
\rangle}=1$
for any ${\bf m}\in \R^n$,
$(\lambda^{d_1},\cdots ,\lambda^{d_r})\in \GS$.
So for $(f_1,\cdots ,f_r)\in A_{D,\Sigma}(m)$,
$(f_1(\lambda \textbf{\textit{x}}),\cdots ,f_r(\lambda \textbf{\textit{x}}))=
(\lambda^{d_1}f_1(\textbf{\textit{x}}),\cdots ,\lambda^{d_r}f_r(\textbf{\textit{x}})).$ Hence
$
[f_1(\lambda \textbf{\textit{x}}),\cdots ,f_r(\lambda \textbf{\textit{x}})]=
[f_1(\textbf{\textit{x}}),\cdots ,f_r(\textbf{\textit{x}})]$
in
$\XS$
for any $(\lambda ,\textbf{\textit{x}})\in \R^*\times (\R^{m+1}\setminus
\{{\bf 0}\})$ and the map $j_D^{\prime}$  is well-defined.
\qed
}
\end{rmk}

Since the space $A_{D,\Sigma}(m)$ is connected, the image of $j^{\p}_D$ lies in a connected
component of $\Map (\RP^m,\XS)$, which will be denoted by
$\Map_D(\RP^m,\XS)$.
This gives a natural map
\begin{equation}\label{id}
j_D^{\p}:A_{D,\Sigma}(m) \to \Map_D(\RP^m,\XS).
\end{equation}


\paragraph{Algebraic maps from a real algebraic variety to a complex algebraic variety.}
Let $X$ be an algebraic variety over 
$\mathbb{R}$,
and $Y$ an algebraic variety  over 
$\mathbb{C}$.
We set $X_{\mathbb{C}}=X\times_{\mathbb{R}}\mathbb{C}$,
and denote by $X(\mathbb{R})$ the set of $\mathbb{R}$-valued points of $X$.
Let $\Phi:X_{\mathbb{C}} - \to Y$ be a rational map%
\footnote{%
The notation $\Phi:X_{\mathbb{C}}- \to Y$ is used to stress 
the fact that $\Phi$ is not a map, namely is not necessarily defined at every point of $X_{\mathbb{C}}$
but is defined only on some Zariski dense open subset of $X_{\mathbb{C}}$.}
 defined over $\mathbb{C}$
and let $U$ be the largest Zariski dense open subset of $X_{\mathbb{C}}$
such that $(U,\varphi)$ represents $\Phi$, where $\varphi:U\to Y$ is a regular map
defined over $\mathbb{C}$.
If $X(\mathbb{R})\cap (X\setminus U)=\emptyset$, then we have a map
$\varphi|_{X(\mathbb{R})}:X(\mathbb{R})\to Y$.

An algebraic map $f:X\to Y$ is defined to be a map $X(\mathbb{R})\to Y$, which is also denoted by $f$
by abuse of notation, such that $f=\varphi|_{X(\mathbb{R})}$,
where $\varphi:U\to Y$ is a regular map defined over $\mathbb{C}$ 
on a Zariski open subset $U$ of $X_{\mathbb{C}}$ and $U$ contains $X(\mathbb{R})$. 

Note that an algebraic map $f:X\to Y$ is not a morphism of schemes,
although the algebraic varieties $X$ and $Y$ are (or can be regarded as) schemes and we use the adjective ``algebraic":
in fact, there does not exist a morphism from a variety defined over $\mathbb{R}$ with some $\mathbb{R}$-valued points to a variety defined over $\mathbb{C}$.
Note also that $\Phi:X_{\mathbb{C}} - \to Y$ is not only a meromorphic map but also a rational map. 
In particular, the map $\varphi$ is not only a holomorphic map but also a regular map. In this sense, 
an algebraic map $f:X\to Y$ defined above is indeed \lq\lq algebraic\rq\rq.
\par
Now let us consider the case
$(X,Y)=(\RP^m,\XS)$.
Let 
$C_D(m)$  denote the {\it incidence correspondence}
\begin{equation}
C_D(m)=\{(F,\textbf{\textit{x}})\in
A_{D,\Sigma}(m)\times\C^{m+1}
\ \vert \ F(\textbf{\textit{x}})\in \ZS\}
\end{equation}
and $pr_1:C_D(m)\to A_{D,\Sigma}(m)$ the first projection.
Because the map $C_D(m)\to \Z$ given by
$(F,\textbf{\textit{x}})\mapsto\dim_{\C}pr_1^{-1}(F)$
is upper semicontinuous, the subspace
$A_{D,\Sigma}(m)^{\circ}$
is a Zariski open subspace of $A_{D,\Sigma}(m)$, where
\begin{equation}\label{AD-circle}
A_{D,\Sigma}(m)^{\circ}=
\{F\in A_{D,\Sigma}(m)\ \vert 
\ \dim_{\C}pr_1^{-1}(F)<m\}.
\end{equation}
Note that we can see that every algebraic map $f:\RP^m\to \XS$ can be represented as
$f=j_D^{\p}(f_1,\cdots ,f_r)$ for some $D=(d_1,\cdots ,d_r)$ and
$(f_1,\cdots ,f_r)\in A_{D,\Sigma}(m)$ such that
$\sum_{k=1}^rd_k{\bf n}_k={\bf 0}$.
If $(f_1,\cdots ,f_r)\notin A_{D,\Sigma}(m)^{\circ},$
the representation of $f$  the degree $D$ are not unique.%
\footnote{%
See Remark \ref{rmk: homeo} (ii) (cf. Example \ref{exa: example}) for the details.}
However, the following holds.%
\footnote{
The proof of Proposition \ref{prp: the minimal degree}
is given in \S \ref{section: Appendix}.}

\begin{prp}
\label{prp: the minimal degree}
Let 
$X_{\Sigma}$ be a 
smooth compact complex toric variety
and  let $f:\RP^m \to \XS$ be an algebraic map.
Then there exists a unique $r$-tuple $D=(d_1,\dots,d_r)\in \Z^r$
such that $\sum_{k=1}^rd_k{\bf n}_k={\bf 0}$ and that
$
f=j_D^{\p}(f_1,\cdots ,f_r)=[f_1,\cdots ,f_r]$
for some
$(f_1,\cdots ,f_r)\in 
A_{D,\Sigma}(m)^{\circ}$, where $f_k\in H^0(\CP^m,\mathcal{O}(d_k))={\cal H}_{d_k,m}$
for each $1\leq k\leq r$.
Moreover, the element $(f_1,\cdots ,f_r)\in A_{D,\Sigma}(m)^{\circ}$
is uniquely determined by the map $f$ up to $\GS$-action$;$ i.e.
if
$f=j_D^{\prime}(h_1,\cdots ,h_r)$  
for another
$(h_1,\cdots ,h_r)\in A_{D,\Sigma}(m)^{\circ}$, then
there exists  an element $(\mu_1,\cdots ,\mu_r)\in\GS$
such that $(h_1,\cdots ,h_r)=(\mu_1f_1,\cdots ,\mu_rf_r).$
\end{prp}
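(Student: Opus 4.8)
The plan is to pass to the complexification, use Cox's homogeneous coordinates together with Hilbert's Theorem~90, and localize the combinatorics of $\Sigma$ into one place. First I would record the dictionary with algebraic geometry. Since $\RP^m$ is Zariski dense in $\CP^m$, an algebraic map $f\colon\RP^m\to\XS$ determines a \emph{unique} rational map $\Phi\colon\CP^m- \to\XS$ whose domain of definition contains $\RP^m$, and a tuple $(h_1,\dots,h_r)\in A_{D,\Sigma}(m)$ satisfies $j_D^{\p}(h_1,\dots,h_r)=f$ precisely when $[h_1:\cdots:h_r]$ represents $\Phi$. Thus Proposition~\ref{prp: the minimal degree} is equivalent to the conjunction of: (a) $\Phi$ admits a representative lying in $A_{D,\Sigma}(m)^{\circ}$ for some $D$; and (b) any two such representatives (for possibly different degrees) agree up to the $\GS$-action and have the same degree $D$. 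For the algebraic input I would use that, $\XS$ being smooth and complete, $\operatorname{Pic}\XS$ is free of rank $r-n$, so $\GS$ is a split torus $(\C^*)^{r-n}$ and $\C^r\setminus\ZS\to\XS$ is a $\GS$-torsor (cf.\ (\ref{hs})). Evaluating $\Phi$ at the generic point of $\CP^m$ and applying Hilbert~90 over $K=\C(z_0,\dots,z_m)$ gives a lift of $\Phi$ to an $r$-tuple of homogeneous rational functions, and this lift is unique up to the action of $\GS(K)$ (free action, geometric quotient). Hence two polynomial representatives $(f_k),(h_k)$ of $\Phi$ satisfy $h_k=\nu_kf_k$ for a unique tuple $(\nu_1,\dots,\nu_r)\in\GS(K)$, with $\nu_k$ homogeneous of degree $d_k'-d_k$.

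The main work is (b), and here the fan enters. Assuming $(f_k),(h_k)\in A_{D,\Sigma}(m)^{\circ}$ (of degrees $D,D'$) and that some $\nu_k$ is nonconstant, I would choose an irreducible homogeneous $\pi$ with $e:=(\operatorname{ord}_\pi\nu_k)_k\neq\mathbf 0$; applying $\operatorname{ord}_\pi$ to the monomial identities defining $\GS(K)$ yields the nontrivial relation $\sum_ke_k\n_k=\mathbf 0$. From $h_k=\nu_kf_k$ one reads off $\pi\mid f_k$ when $e_k<0$ and $\pi\mid h_k$ when $e_k>0$. By the $\circ$-condition, $\{\n_k:\pi\mid f_k\}$ contains no primitive collection — otherwise, by (\ref{pri}), $V(\pi)\subset F^{-1}(\ZS)$ and so $\dim_{\C}F^{-1}(\ZS)\ge m$ — hence this set lies in a single cone $\tau_F\in\Sigma$, and symmetrically $\{\n_k:\pi\mid h_k\}\subset\tau_H\in\Sigma$. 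Then $v:=\sum_{e_k>0}e_k\n_k=\sum_{e_k<0}(-e_k)\n_k$ lies in $\tau_F\cap\tau_H$, which is a face of each and a cone of $\Sigma$; being a non-negative combination of the $\n_k$ with $e_k>0$ (which lie in $\tau_H$) that lands in the face $\tau_F\cap\tau_H$, it forces each such $\n_k$ into $\tau_F\cap\tau_H$, and likewise for the $e_k<0$. So every $\n_k$ with $e_k\neq0$ is a ray generator of the \emph{smooth} cone $\tau_F\cap\tau_H$, hence these vectors are linearly independent — contradicting $\sum_ke_k\n_k=\mathbf 0$ with $e\neq\mathbf 0$. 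Therefore $(\nu_k)\in\GS(\C)=\GS$; comparing degrees gives $d_k'=d_k$, which is exactly (b).

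For (a) I would start from any representative $(f_1,\dots,f_r)\in A_{D,\Sigma}(m)$ (one exists, as noted just before the Proposition) and reduce it. If it is not in the $\circ$-locus then $\dim_{\C}F^{-1}(\ZS)=m$, so some irreducible homogeneous $\pi$ — necessarily without nonzero real zeros, since $F(\R^{m+1}\setminus\{\mathbf 0\})\cap\ZS=\emptyset$ — divides every $f_k$ indexed by a primitive collection. Writing $a_k=\operatorname{ord}_\pi f_k$, completeness of $\Sigma$ puts $\sum_ka_k\n_k$ in some cone $\sigma$, and smoothness lets me write $\sum_ka_k\n_k=\sum_kc_k\n_k$ with $c_k\in\Z_{\ge 0}$ supported on the rays of $\sigma$. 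Then $(\pi^{c_k-a_k}f_k)_k$ is again a polynomial tuple, lies in $A_{D',\Sigma}(m)$ and represents $f$ — the multiplier lies in $\GS$ at every nonzero real point because $\sum_k(c_k-a_k)\n_k=\mathbf 0$ — and over it $\pi$ no longer divides the $f_k$'s of any primitive collection (the relevant index set now lies in $\sigma$). Iterating over the finitely many such $\pi$ (each step touching only the $\pi$-adic data of the tuple) terminates at a representative in $A_{D,\Sigma}(m)^{\circ}$, whose degree is the $D$ of (a); part (b) then finishes the uniqueness statements.

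The hard part, shared by (a) and (b), is precisely this combinatorial mechanism: using completeness of $\Sigma$ to reduce the degree and clear denominators, and using smoothness — every cone of $\Sigma$ having linearly independent ray generators — to kill the residual $\GS(K)$-ambiguity between two minimal representatives. A further delicate point that I expect to require separate care is the bookkeeping when some $f_k$ vanishes identically (so that $\operatorname{ord}_\pi f_k=\infty$ and $\sum_ka_k\n_k$ is not literally defined), together with the routine but necessary verification, at each stage of the construction, that the tuples produced satisfy the real non-vanishing condition defining $A_{D,\Sigma}(m)$.
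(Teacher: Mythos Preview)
Your approach is correct in outline and genuinely different from the paper's. The paper follows Cox's functor-of-points description of $\XS$: it takes the regular extension $\varphi:U\to\XS$ of $f$, observes (from properness of $\XS$ and normality of $\CP^m$) that $\CP^m\setminus U$ has codimension $\ge 2$, pulls back the universal $\Sigma$-collection $(\mathcal O_{\XS}(D_\rho),\iota_\rho,c_{\chi^{\bf m}})$, and then extends the resulting line bundles and sections across the small locus $\CP^m\setminus U$ to obtain the $f_\rho$ with their degrees $d_\rho$; uniqueness is read off from the equivalence relation on $\Sigma$-collections exactly as in \cite[Theorem 3.1]{C}. Thus in the paper the $\circ$-condition enters only implicitly, via the codimension-$2$ statement, and the fan combinatorics is entirely hidden inside Cox's theorem.

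Your argument, by contrast, works directly with polynomial tuples and valuations and makes the role of the hypotheses transparent: completeness is used to place $\sum_k a_k\n_k$ in a cone during the reduction step (a), and smoothness is used in (b) to conclude that the ray generators appearing in a single cone are linearly independent, killing the residual $\GS(K)$-ambiguity. This is more elementary (no $\Sigma$-collections, no extension across codimension~$2$) and gives an explicit algorithm for producing the minimal representative. The price is the bookkeeping you flag at the end: when some $f_k\equiv 0$ the quantity $\sum_k a_k\n_k$ is not literally defined, and one has to reorganize the reduction in (a) around the cone containing $\{\n_k:f_k\equiv 0\}$. In the paper's approach this issue never arises, because the degree $d_\rho$ comes from the line bundle $L_\rho$ rather than from the section, so a vanishing section causes no ambiguity. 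Your uniqueness argument (b), on the other hand, goes through unchanged when some $f_k\equiv 0$: the element $\nu\in\GS(K)$ is still uniquely determined by freeness of the $\GS$-action on $(\C^r\setminus\ZS)(K)$, and the inclusion $\{k:e_k<0\}\subset\{k:\pi\mid f_k\}$ holds trivially for those $k$.
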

\begin{rmk}\label{rmk: AD}
{\rm
The above result  is the analogue of
\cite[Theorem 3.1]{C} in the case of algebraic maps, and
by using it we can define a unique \lq\lq minimal degree\rq\rq of an algebraic map
(as in Definition \ref{dfn: degree}).
However, because the topology of the space of algebraic maps of fixed minimal degree is difficult to analyze, we do not use the concept of minimal degree beyond this point in this paper.
\qed
}
\end{rmk}

Because ${\bf e}=(1,1,\cdots ,1)\in \C^r\setminus \ZS$, we can choose 
$x_0 =[1,1,\cdots,1]\in \XS$ as the base-point of $\XS$. 
Let $A_D(m,\XS)\subset A_{D,\Sigma}(m)$ be defined by
\begin{equation}\label{AD}
A_D(m,\XS) =\{(f_1,\cdots ,f_r)\in A_{D,\Sigma}(m)\ \vert\ 
(f_1(\textbf{\textit{e}}_1),\cdots ,f_r(\textbf{\textit{e}}_1))={\bf e}\},
\end{equation}
where $\textbf{\textit{e}}_1=(1,0,\cdots ,0)\in \R^{m+1}$. We choose
 $[\textbf{\textit{e}}_1]=[1:0:\cdots :0]$ as the base-point of
$\RP^m$.
Note that 
$j_D^{\p}(f_1,\cdots ,f_r)\in \Map^*(\RP^m,\XS)$ 
if $(f_1,\cdots ,f_r)\in A_D(m,\XS)$.
Hence, setting
 $\Map^*_D(\RP^m,\XS)=\Map^*(\RP^m,\XS)\cap
\Map_D(\RP^m,\XS)$, we have a map
\begin{equation}\label{def of iD}
i_D=j_D^{\p}\vert A_D(m,\XS):A_D(m,\XS) \to \Map^*_D(\RP^m,\XS).
\end{equation}
\par\vspace{2mm}\par
Suppose that $m\geq 2$ and let 
us choose a fixed element $(g_1,\cdots ,g_r)\in A_{D}(m-1,\XS).$
For each $1\leq k\leq r$, let
$B_k=\{g_k+z_mh:h\in {\cal H}_{d_k-1,m}\}$.
Let 
$A_D(m,\XS;g)\subset A_D(m,\XS)$ be the subspace
\begin{equation}\label{ADR}
A_D(m,\XS ;g)=A_D(m,\XS)\cap (B_1\times B_2\times \cdots \times B_r).
\end{equation}
It is easy to see that
$i_D(f_1,\cdots ,f_r)\vert \RP^{m-1}=g$ if
$(f_1,\cdots ,f_r)\in A_D(m,\XS;g)$, where
$g$ denotes the element of $\Map^*_D(\RP^{m-1},\XS)$ given by
$g([\textbf{\textit{x}}])=
[g_1(\textbf{\textit{x}}),\cdots ,g_r(\textbf{\textit{x}})]
$
for
$\textbf{\textit{x}}\in \R^{m}\setminus \{{\bf 0}\}.$
Let 
$i_D^{\p}:A_D(m,\XS;g)\to F(\RP^m,\XS;g)\simeq \Omega^m\XS$ be the map defined 
by
\begin{equation}\label{resAD}
i_D^{\p}=i_D\vert A_D(m,\XS;g):A_D(m,\XS;g)\to F(\RP^m,\XS;g)\simeq \Omega^m\XS.
\end{equation}

\paragraph{The action of $\GS$ on the space $A_{D,\Sigma}(m)$
and its orbit space.}
The group $\GS$ acts on the space
$A_{D,\Sigma}(m)$ by coordinate-wise multiplication.
Let 
$\widetilde{A_D}(m,\XS)$ 
denote the orbit space
\begin{equation}\label{ADS}
\widetilde{A_D}(m,\XS)=A_{D,\Sigma}(m)/\GS .
\end{equation}
Clearly, $j'_D$ also induces the map
$j_D:\widetilde{A_D}(m,\XS) \to \Map_D(\RP^m,\XS)$
given by
\begin{equation}\label{def of jD}
j_D([f_1,\cdots ,f_r])([\textbf{\textit{x}}])=
[f_1(\textbf{\textit{x}}),\cdots ,f_r(\textbf{\textit{x}})]
\quad
\end{equation}
for
$\textbf{\textit{x}}
=(x_0,\cdots ,x_m)\in \R^{m+1}\setminus\{{\bf 0}\}.$
\paragraph{Spaces of algebraic maps.}
Let
$\Alg (\RP^m,\XS)$ denote the space of all algebraic maps from
$\RP^m$ to $\XS$ and let
$\Gamma_D^{\p}:A_{D,\Sigma}(m)\to \Alg (\RP^m,\XS)$ 
denote the natural projection
given by
\begin{equation*}\label{equ: gamma}
\Gamma^{\p}_D(f_1,\cdots ,f_r)=j_D^{\p}(f_1,\cdots ,f_r)
=[f_1,\cdots ,f_r]
\qquad
\mbox{for }(f_1,\cdots ,f_r)\in A_{D,\Sigma}(m).
\end{equation*}
Let $\Alg_D(\RP^m,\XS)$ denote its image
\begin{equation} 
\Alg_D(\RP^m,\XS)=\Gamma^{\p}_D(A_{D,\Sigma}(m))
\subset \Map_D(\RP^m,\XS).
\end{equation}
We can identify $\Gamma^{\p}_D$ with the  projection map
\begin{equation}
\Gamma_D^{\p}:A_{D,\Sigma}(m)\to \Alg_D(\RP^m,\XS).
\end{equation}
Since the action of $\GS$ is compatible with $\Gamma^{\p}_D$,
it induces the natural projection map
\begin{equation}\label{projection_gamma}
\Gamma_D:\widetilde{A_D}(m,\XS) \to \Alg_D(\RP^m,\XS).
\end{equation}
It is easy to see that
\begin{equation}\label{equ: projection GammaD}
j_D=j_D^{\C}\circ \Gamma_D:
\widetilde{A_D}(m,\XS)\to \Map_D(\RP^m,\XS),
\end{equation}
where
$j_{D}^{\C}:\Alg_D(\RP^m,\XS) \stackrel{\subset}{\rightarrow}
 \Map_{D}(\RP^m,\XS)$ denotes
the inclusion map.
\par\vspace{2mm}\par

Let $\Alg^*_D(\RP^m,\XS)$ denote the subspace of 
$\Alg_D(\RP^m,\XS)$
given by $\Alg^*_D(\RP^m,\XS)= \Alg_D(\RP^m,\XS)\cap\Map^*(\RP^m,\XS)$.
\par\vspace{2mm}\par
If $m\geq 2$ and $g\in \Alg_D^*(\RP^{m-1},\XS)$,
we denote by $\Alg^*_D(\RP^m,\XS;g)$  the subspace of 
$\Alg_D(\RP^m,\XS)$ defined by $\Alg^*_D(\RP^m,\XS ;g)=\Alg_D(\RP^m,\XS)\cap F(\RP^m,\XS;g)$.
\par
By the restriction, the map $\Gamma_D^{\p}$ induces maps
\begin{equation}
\begin{cases}
\Psi_D:A_D(m,\XS) \to \Alg^*_D(\RP^m,\XS),
\\
\Psi_D^{\p}:A_D(m,\XS;g) \to 
\Alg^*_D(\RP^m,\XS;g).
\end{cases}
\end{equation}
Let 
\begin{equation}
\begin{cases}
i_{D}^{\C}:\Alg_D^*(\RP^m,\XS)\stackrel{\subset}{\rightarrow} \Map_{D}^*(\RP^m,\XS)
\\
\hat{i}_{D}^{\C}:\Alg_D^*(\RP^m,\XS;g)\stackrel{\subset}{\rightarrow} F(\RP^m,\XS;g)\simeq \Omega^m\XS
\end{cases}
\end{equation}
denote the inclusions.
It is easy to see that
\begin{equation}\label{jd}
\begin{cases}
i_D=i_{D}^{\C}\circ \Psi_D:A_D(m,\XS)\to \Map_{D}^*(\RP^m,\XS)
\\
i_D^{\p}=\hat{i}_{D}^{\C}\circ \Psi_D^{\p}:A_D(m,\XS;g)\to F(\RP^m,\XS;g)
\simeq \Omega^m\XS
\end{cases}
\end{equation}
The contents of the above definitions are be summarized in the following 
diagram.
\begin{equation*}
\xymatrix{%
\Map_{D}(\RP^m,\XS)     &     \Map^*_{D}(\RP^m,\XS) \ar@{_{(}->}[l]  &   F(\RP^m,\XS;g) \ar@{_{(}->}[l] 
\\
\Alg_D(\RP^m,\XS) \ar@{^{(}->}[u]^{j_{D}^{\C}}     &     \Alg_D^*(\RP^m,\XS) \ar@{^{(}->}[u]^{i_{D}^{\C}} \ar@{_{(}->}[l]  &   \Alg_D^*(\RP^m,\XS;g)  \ar@{^{(}->}[u]^{\hat{i}_{D}^{\C}} \ar@{_{(}->}[l]\\
\widetilde{A_D}(m,\XS) \ar@{->>}[u]^{\Gamma_D} \ar@/^4pc/[uu]^>>>>>>{j_D}   &     A_D(m,\XS) \ar@{->>}[u]^{\Psi_D} \ar@/^4pc/[uu]^>>>>>>{i_D} &   A_D(m,\XS;g) \ar@{->>}[u]^{\Psi_D^{\p}} \ar@/^4pc/[uu]^>>>>>>{i_D^{\p}}
}
\end{equation*}

\subsection{The main results.}
For $d_k\in \Z_{\geq 1}$ $(1\leq k\leq r)$,
let $\db$ and $D(d_1,\cdots ,d_r;m)$
be the positive integers defined by 
\begin{equation}\label{Dnumbers}
\db =\min \{d_1,d_2,\cdots ,d_r\},
\qquad
D(d_1,\cdots ,d_r;m)=(2r_{\rm min}-m-1)\db -2.
\end{equation}
Let $g\in \Alg_D^*(\RP^{m-1},\XS)$ be any fixed based algebraic map and 
we choose
an element $(g_1,\cdots ,g_r)\in A_D(m-1,\XS)$ such that
$g=i_D(g_1,\cdots ,g_r)=[g_1,\cdots ,g_r]$.
\par\vspace{2mm}\par
The main result  of this paper is the following theorem. 

\begin{thm}\label{thm: I}
Let $\Sigma$ be a complete fan in $\R^n$
satisfying the conditions
$(\ref{H(k)}.1)$, $(\ref{H(k)}.2)$, and
$\XS$ be a smooth compact toric variety associated to the fan
$\Sigma$.
If $2\leq m\leq 2(\rmin -1)$ and $D=(d_1,\cdots ,d_r)\in (\Z_{\geq 1})^r$, the map
$
i_D^{\p}:A_D(m,\XS;g) \to F(\RP^m,\XS;g)\simeq \Omega^m\XS
$
is a homology equivalence through dimension $D(d_1,\cdots ,d_r;m).$
\end{thm}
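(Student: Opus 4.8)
The plan is to deduce Theorem \ref{thm: I} from the weaker stabilization statement Theorem \ref{thm: AKY1} together with the stability result Theorem \ref{thm: sd} and the two "building block" theorems (Theorem \ref{thm: AKY1-stable}, which identifies a stabilized mapping space with $\Omega^m\XS$ up to homology, and the comparison of truncated simplicial resolutions). Concretely, I would set up the commutative square in which $i_D^{\p}:A_D(m,\XS;g)\to F(\RP^m,\XS;g)\simeq\Omega^m\XS$ is compared with the analogous map for a larger degree $D+\textbf{\textit{a}}$ (where $\textbf{\textit{a}}$ is a tuple pushing each $d_k$ up), the two horizontal arrows being stabilization maps. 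The top stabilization map $A_D(m,\XS;g)\to A_{D+\textbf{\textit{a}}}(m,\XS;g')$ should be a homology equivalence through a range that grows with $D$, by Theorem \ref{thm: sd}; the bottom map on $\Omega^m\XS$ is a homotopy equivalence (or at least a homology equivalence) by construction. Passing to the colimit over increasing degrees, the stabilized version of $i_D^{\p}$ becomes a homology equivalence by Theorem \ref{thm: AKY1-stable}, and then the finite-stage map $i_D^{\p}$ inherits a homology equivalence through the explicit dimension $D(d_1,\dots,d_r;m)=(2\rmin-m-1)\db-2$ by chasing the stability range.

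In more detail, the key steps in order are: (1) express $A_D(m,\XS;g)$ as the complement in an affine space $A_D(m,\XS;g)\cong\prod_k{\cal H}_{d_k-1,m}$ of a discriminant $\Sigma_D$ consisting of tuples $F$ with $F(\textbf{\textit{x}})\in\ZS$ for some real $\textbf{\textit{x}}$, using the decomposition (\ref{pri}) of $\ZS$ into coordinate subspaces $V(z_{i_1},\dots,z_{i_s})$ indexed by primitive collections, so that the discriminant is a union of "several-polynomials-vanish-simultaneously" loci and each irreducible piece has real codimension controlled by $\rmin$; (2) apply Alexander duality in the one-point compactification to convert homology of the complement into Borel–Moore homology of the discriminant; (3) resolve the singularities of the discriminant using the non-degenerate simplicial resolution and its truncation after the critical term, as described in the introduction, obtaining a spectral sequence whose columns are computed from configuration-space-like data (Theorem \ref{thm: sd} is precisely the statement controlling how this spectral sequence stabilizes in $D$); (4) compare with the corresponding (Vassiliev) spectral sequence for $\Omega^m\XS$, using that the connectivity of $\C^r\setminus\ZS$ (from \S\ref{section: polyhedral product}) is high enough — here the hypothesis $m\le 2(\rmin-1)$, equivalently $2\rmin-m-1\ge m+1\ge 3$, guarantees the relevant truncation is in the stable range and that $D(d_1,\dots,d_r;m)\ge 1$; (5) identify the stable limit via Theorem \ref{thm: AKY1-stable} and read off the range.

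The main obstacle, I expect, is step (3)–(4): controlling the truncated non-degenerate simplicial resolution and showing that, below dimension $D(d_1,\dots,d_r;m)$, its spectral sequence agrees with the one for the continuous mapping space. This requires (a) checking that the "bad" fibres that are collapsed in the truncation are indeed non-contractible in a way that does not create spurious homology below the cutoff, (b) matching the first few filtration terms — which are governed by symmetric products of $\RP^m$ and of the $V(z_{i_1},\dots,z_{i_s})$ — on both sides, and (c) verifying that the degeneracy introduced by the Veronese resolution (needed to kill the extra columns coming from non-independence of values of a fixed-degree algebraic map, as flagged in the introduction) is compatible with the truncation. The numerology $(2\rmin-m-1)\db-2$ should emerge as the smallest dimension at which the first potentially-different column of the truncated spectral sequences can contribute, namely roughly $\rmin$ times the real codimension $2\rmin-m-1$ of the smallest stratum of the discriminant, shifted by the Veronese degree and the Alexander-duality dimension count; pinning down this bookkeeping precisely, and showing the comparison map is an iso strictly below it, is the crux.
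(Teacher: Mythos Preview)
Your strategy is correct and matches the paper's proof of Theorem~\ref{thm: I} exactly: the paper deduces it in one line from diagram~(\ref{diagram_sd}), Theorem~\ref{thm: AKY1-stable}, and Theorem~\ref{thm: sd}, precisely the combination you outline in your first paragraph. Your steps (1)--(5) and the discussion of obstacles really concern the proofs of those ingredient theorems rather than of Theorem~\ref{thm: I} itself (and note a small numerical slip: $m\le 2(\rmin-1)$ yields only $2\rmin-m-1\ge 1$, not $2\rmin-m-1\ge m+1$).
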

\begin{rmk}
{\rm
A map $f:X\to Y$ is called  {\it a homology equivalence through dimension} 
$N$ 
if
$
f_*:H_k(X,\Z) \to H_k(Y,\Z)
$
is an isomorphism for any $k\leq N$.}
\end{rmk}
\par\vspace{2mm}\par

By the same method as in \cite[Theorem 3.5]{AKY1}
(cf. \cite{GM}), we also obtain the following:
\begin{crl}\label{cor: II}
Under the same assumptions as in Theorem \ref{thm: I}, 
if $2\leq m\leq 2(\rmin -1)$ and $D=(d_1,\cdots ,d_r)\in (\Z_{\geq 1})^r$,
the maps
$$
\begin{cases}
j_D:\widetilde{A_D}(m,\XS) \to \Map_D(\RP^m,\XS)
\\
i_D:A_D(m,\XS)\to \Map^*_D(\RP^m,\XS)
\end{cases}
$$
are  homology equivalences through dimension 
$D(d_1,\cdots ,d_r;m).$
\qed
\end{crl}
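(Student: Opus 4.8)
The plan is to deduce both assertions from Theorem \ref{thm: I} by the comparison-of-fibrations argument of \cite[Theorem 3.5]{AKY1} (cf. \cite{GM}). Consider the two families of restriction maps obtained by setting $x_m=0$,
\[
R_m:A_D(m,\XS)\longrightarrow A_D(m-1,\XS),
\qquad
r_m:\Map^*_D(\RP^m,\XS)\longrightarrow \Map^*_D(\RP^{m-1},\XS),
\]
which are compatible via the maps $i_D$. The fibre of $R_m$ over the chosen $g$ is exactly $A_D(m,\XS;g)$, the fibre of $r_m$ is $F(\RP^m,\XS;g)\simeq\Omega^m\XS$ by Lemma \ref{lmm: A1}, and $R_m|_{A_D(m,\XS;g)}=i_D^{\p}$ maps to $r_m^{-1}(g)$. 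The map $r_m$ is a fibration; the first technical point is that $R_m$ is a quasifibration onto $A_D(m-1,\XS)$, and this is where the open condition ``$F(\textbf{\textit{x}})\notin\ZS$ for all real $\textbf{\textit{x}}\neq{\bf 0}$'' must be controlled. I expect this to be the main obstacle; it is handled, as in \cite{AKY1}, by constructing partial sections over a suitable open cover of $A_D(m-1,\XS)$ and applying Dold's criterion.

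Granting this, I would show that $i_D:A_D(m,\XS)\to\Map^*_D(\RP^m,\XS)$ is a homology equivalence through dimension $D(d_1,\dots,d_r;m)$ by induction on $m$. In the inductive step, Theorem \ref{thm: I} gives that the map on fibres $i_D^{\p}$ is a homology equivalence through dimension $D(d_1,\dots,d_r;m)$, while the inductive hypothesis gives that the map $i_D$ on the bases is a homology equivalence through dimension $D(d_1,\dots,d_r;m-1)$; since $D(d_1,\dots,d_r;m)=(2\rmin-m-1)\db-2$ is strictly decreasing in $m$, this latter bound exceeds the one we need. Comparing the Leray--Serre spectral sequences of $R_m$ and $r_m$ by Zeeman's comparison theorem (the local systems being trivial, since $\XS$, hence $\Omega^m\XS$ and the relevant function spaces, are simply connected) then yields that $i_D$ is a homology equivalence through dimension $D(d_1,\dots,d_r;m)$. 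The base case $m=1$ asserts that $i_D:A_D(1,\XS)\to\Map^*_D(\RP^1,\XS)=\Omega\XS$ is a homology equivalence through dimension $(2\rmin-2)\db-2$; this is the $m=1$ analogue of Theorem \ref{thm: I} (with $\Omega\XS$ in place of $F(\RP^1,\XS;g)$) and is proved by the same Vassiliev spectral sequence argument as in \S\ref{section: Vassiliev} and \S\ref{section: main result}.

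Finally, to pass from $i_D$ to $j_D$ I would compare the two evaluation fibrations at the base point $[\textbf{\textit{e}}_1]\in\RP^m$,
\[
A_D(m,\XS)\hookrightarrow\widetilde{A_D}(m,\XS)\xrightarrow{\ \mathrm{ev}\ }\XS,
\qquad
\Map^*_D(\RP^m,\XS)\hookrightarrow\Map_D(\RP^m,\XS)\xrightarrow{\ \mathrm{ev}\ }\XS.
\]
The first is a fibre bundle: $A_{D,\Sigma}(m)\to\widetilde{A_D}(m,\XS)$ is a principal $\GS$-bundle, and because $\GS$ acts freely on $\C^r\setminus\ZS$ the fibre of $\mathrm{ev}$ over the base point $x_0$ is exactly the slice $A_D(m,\XS)$; the maps $i_D$ and $j_D$ are compatible with these two fibrations and induce the identity on the common base $\XS$. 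Since $\XS$ is simply connected (being smooth and compact), Zeeman's comparison theorem applies once more and shows that $j_D$ is a homology equivalence through dimension $D(d_1,\dots,d_r;m)$ whenever $i_D$ is. The only delicate steps are the two (quasi)fibration verifications and the attendant handling of the subvariety $\ZS$; everything else is the routine spectral-sequence bookkeeping of \cite[Theorem 3.5]{AKY1}.
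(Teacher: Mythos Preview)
Your overall architecture---induction on $m$ via the restriction (quasi)fibrations $R_m$ and $r_m$, followed by the evaluation fibrations to pass from $i_D$ to $j_D$---is precisely the method of \cite[Theorem~3.5]{AKY1} that the paper invokes, and the passage from $i_D$ to $j_D$ is correct as you describe it.

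There is, however, a genuine gap at your base case. You assert that $i_D:A_D(1,\XS)\to\Map^*_D(\RP^1,\XS)\simeq\Omega\XS$ is a homology equivalence through dimension $(2\rmin-2)\db-2=D(d_1,\dots,d_r;1)$ and that this follows from the same Vassiliev argument as in \S\ref{section: main result}--\S\ref{section: Vassiliev}. Both claims fail. First, Remark~\ref{rmk: m=1} states explicitly that the assertion of Corollary~\ref{cor: II} is \emph{false} for $m=1$ (already for $\XS=\CP^n$), so the statement you take as base case is not true in the range you claim. Second, the argument of \S\ref{section: main result} cannot be run at $m=1$: the key step there is that $\Omega^m p_\Sigma:\Omega^m U({\cal K}_\Sigma)\to\Omega^m\XS$ is a homotopy equivalence, which uses $m\geq 2$ (the fibre of $p_\Sigma$ is the torus $\T^{r-n}$, so looping once does not kill it). Theorem~\ref{thm: AKY1} for $m=1$ only compares $A_D(1,\XS)$ with $\Omega U({\cal K}_\Sigma)$, not with $\Omega\XS$, and these differ on $\pi_1$.

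What your induction actually needs at the bottom is only the weaker statement that the map on bases $A_D(1,\XS)\to\Map^*_D(\RP^1,\XS)$ is a homology equivalence through dimension $D(d_1,\dots,d_r;2)$, not $D(d_1,\dots,d_r;1)$; this is not contradicted by Remark~\ref{rmk: m=1}, but it is also not a consequence of anything proved in \S\ref{section: Vassiliev} or \S\ref{section: main result}, and you give no independent argument for it. You should either establish this weaker $m=1$ input directly (this is where the real work specific to the one-variable case lies, cf.\ \cite{GKY2}), or reorganize the induction so that the start does not require comparing with $\Omega\XS$ at $m=1$. As written, the proposal rests on a base case that the paper itself flags as false.
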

\begin{rmk}\label{rmk: m=1}
{\rm
It is known that
the assertion of Corollary  \ref{cor: II} does not hold for $m=1$.
For example, we can easily see this for $\XS =\CP^n$
(cf. \cite[Theorem 3.5]{GKY2}).}
\end{rmk}

Finally  we consider an example illustrating these results.
If $\XS =\CP^n$, 
it is easy to see that
$(r,\rmin )=(n+1,n+1)$ ( this case was already treated in \cite{KY5}).
Now
consider the case  
$\XS =H(k)$
(the Hirzebruch surface). 
Since there is an isomorphism $H(k)\cong H(-k)$,  we may assume 
that $k\geq 0$.
Let ${\{\bf n}_j\in\Z^2\ \vert \ 1\leq j\leq 4\}$ be the set of the primitive elements of the fan 
$\Sigma$
given in Example \ref{exa: Hirzebruch}.
It is easily see that $(r,\rmin)=(4,2)$. 
Since $\sum_{j=1}^4d_j{\bf n}_j={\bf 0}$ if and only if
$(d_3,d_4)=(d_1,d_2+kd_1)$, we obtain the following result.
\begin{exa}[The case $(\XS ,m) =(H(k),2)$]
\label{exa: III}
If $k\geq 0$,
$d_j\geq 1$ $(j=1,2)$ are integers and
$D=(d_1,d_2,d_1,d_2+kd_1)$,  the  maps
$$
\begin{cases}
j_D:\widetilde{A_D}(2,H(k))\to \Map_D(\RP^2,H(k)),\quad
i_D:A_D(2,H(k))\to \Map_D^*(\RP^2,H(k)),
\\
i_D^{\p}:A_D(2,H(k);g)\to \Omega^2_DH(k)
\end{cases}
$$
are homology equivalences through dimension
$\min \{d_1,d_2\}-2$.
\qed 
\end{exa}

\section{Simplicial resolutions.}\label{section 2}

In this section, we summarize  the definitions of the non-degenerate simplicial resolution
and the associated truncated resolutions (\cite{AKY1}, \cite{KY4}, \cite{Mo2}, \cite{Mo3}, \cite{Va}).
\begin{dfn}\label{def: 2.1}
{\rm
(i) For a finite set $\textbf{\textit{v}} \subset \R^N$,
let $\sigma (\textbf{\textit{v}})$ denote the convex hull spanned by 
$\textbf{\textit{v}}.$
Let $h:X\to Y$ be a surjective map such that
$h^{-1}(y)$ is a finite set for any $y\in Y$, and let
$i:X\to \R^N$ be an embedding.
Let  $\mathcal{X}^{\Delta}$  and $h^{\Delta}:{\mathcal{X}}^{\Delta}\to Y$ 
denote the space and the map
defined by
$$
\mathcal{X}^{\Delta}=
\big\{(y,u)\in Y\times \R^N:
u\in \sigma (i(h^{-1}(y)))
\big\}\subset Y\times \R^N,
\ h^{\Delta}(y,u)=y.
$$
The pair $(\mathcal{X}^{\Delta},h^{\Delta})$ is called
{\it a simplicial resolution of }$(h,i)$.
In particular, $(\mathcal{X}^{\Delta},h^{\Delta})$
is called {\it a non-degenerate simplicial resolution} if for each $y\in Y$
any $k$ points of $i(h^{-1}(y))$ span $(k-1)$-dimensional simplex of $\R^N$.
\par
(ii)
For each $k\geq 0$, let $\mathcal{X}^{\Delta}_k\subset \mathcal{X}^{\Delta}$ be the subspace
given by 
$$
\mathcal{X}_k^{\Delta}=\big\{(y,u)\in \mathcal{X}^{\Delta}:
u \in\sigma (\textbf{\textit{v}}),
\textbf{\textit{v}}=\{v_1,\cdots ,v_l\}\subset i(h^{-1}(y)),l\leq k\big\}.
$$
We make identification $X=\mathcal{X}^{\Delta}_1$ by identifying 
 $x\in X$ with the pair
$(h(x),i(x))\in \mathcal{X}^{\Delta}_1$,
and we note that  there is an increasing filtration
$$
\emptyset =
\mathcal{X}^{\Delta}_0\subset X=\mathcal{X}^{\Delta}_1\subset \mathcal{X}^{\Delta}_2\subset
\cdots \subset \mathcal{X}^{\Delta}_k\subset \mathcal{X}^{\Delta}_{k+1}\subset
\cdots \subset \bigcup_{k= 0}^{\infty}\mathcal{X}^{\Delta}_k=\mathcal{X}^{\Delta}.
$$
}
\end{dfn}


\begin{rmk}\label{Remark: non-degenerate}
{\rm
Even for a  surjective map $h:X\to Y$ which is not finite to one,  
it is still possible to construct an associated non-degenerate simplicial resolution.
In fact, a non-degenerate simplicial resolution may be
constructed by choosing a sequence of embeddings
$\{\tilde{i}_k:X\to \R^{N_k}\}_{k\geq 1}$ satisfying the following two conditions
for each $k\geq 1$ (cf. \cite{Va}).
\begin{enumerate}
\item[(\ref{def: 2.1}$)_k$]
\begin{enumerate}
\item[(i)]
For any $y\in Y$,
any $t$ points of the set $\tilde{i}_k(h^{-1}(y))$ span $(t-1)$-dimensional affine subspace
of $\R^{N_k}$ if $t\leq 2k$.
\item[(ii)]
$N_k\leq N_{k+1}$ and if we identify $\R^{N_k}$ with a subspace of
$\R^{N_{k+1}}$, 
then $\tilde{i}_{k+1}=\hat{i}\circ \tilde{i}_k$,
where
$\hat{i}:\R^{N_k}\stackrel{\subset}{\rightarrow} \R^{N_{k+1}}$
denotes the inclusion.
\end{enumerate}
\end{enumerate}
Let
$\dis\mathcal{X}^{\Delta}_k=\big\{(y,u)\in Y\times \R^{N_k}:
u\in\sigma (\textbf{\textit{v}}),
\textbf{\textit{v}}
=\{v_1,\cdots ,v_l\}\subset \tilde{i}_k(h^{-1}(y)),l\leq k\big\}.$
Then
by identifying  ${\cal X}^{\Delta}_k$ with a subspace
of ${\cal X}_{k+1}^{\Delta}$,  we define the non-degenerate simplicial
resolution ${\cal X}^{\Delta}$ of  $h$ as the union  
$\dis {\cal X}^{\Delta}=\bigcup_{k\geq 1} {\cal X}^{\Delta}_k$.
}
\end{rmk}



\begin{dfn}\label{def: 2.3}
{\rm
Let $h:X\to Y$ be a surjective semi-algebraic map between semi-algebraic spaces, 
$j:X\to \R^N$ be a semi-algebraic embedding, and let
$({\cal X}^{\Delta},h^{\Delta}:{\cal X}^{\Delta}\to Y)$
denote the associated non-degenerate  simplicial resolution of $h$.
\par
Let $k$ be a fixed positive integer and let
$h_k:{\cal X}^{\Delta}\to Y$ be the map
defined by the restriction
$h_k:=h^{\Delta}\vert {\cal X}^{\Delta}$.
The fibres of the map $h_k$ are $(k-1)$-skeleta of the fibres of $h^{\Delta}$ and, in general,  fail to be simplices over the subspace
$$
Y_k=\{y\in Y:h^{-1}(y)\mbox{ consists of more than $k$ points}\}.
$$
Let $Y(k)$ denote the closure of the subspace $Y_k$.
We modify the subspace ${\cal X}^{\Delta}_k$ so as to make the all
the fibres of $h_k$ contractible by adding to each fibre of $Y(k)$ a cone whose base
is this fibre.
We denote by $X^{\Delta}(k)$ this resulting space and by
$h^{\Delta}_k:X^{\Delta}(k)\to Y$ the natural extension of $h_k$.
Following  \cite{Mo3}, we call the map $h^{\Delta}_k:X^{\Delta}(k)\to Y$
{\it the truncated $($after the $k$-th term$)$  simplicial resolution} of $Y$.
Note that there is a natural filtration
$$
\emptyset =X^{\Delta}_0\subset X^{\Delta}_1\subset
\cdots 
\subset X^{\Delta}_l\subset X^{\Delta}_{l+1}\subset \cdots
\subset  X^{\Delta}_k\subset X^{\Delta}_{k+1}=
X^{\Delta}_{k+2}
=\cdots =X^{\Delta}(k),
$$
where $X^{\Delta}_l={\cal X}^{\Delta}_l$ if $l\leq k$ and
$X^{\Delta}_l=X^{\Delta}(k)$ if $l>k$.
}
\end{dfn}
\section{Non-degenerate simplicial resolutions.
}
\label{section 3}

\begin{dfn}\label{Def: 3.1}
{\rm
Fix a based algebraic map $g\in\Alg_D^*(\RP^{m-1},\XS)$ of degree $D$ together with a representation $(g_1,\ldots,g_r)\in A_D(m-1,\XS)$
such that $g=[g_1,\cdots ,g_r]$.
 Note that 
$A_D(m,\XS;g)$ is an open subspace of the affine space $B_D=B_1\times \cdots \times B_r$. 
\par\vspace{2mm}\par
(i)
 Let $N_D=\dim_{\C} B_D=\sum_{k=1}^r\binom{m+d_k-1}{m}$, and 
let $\Sigma_D\subset B_D$ denote the \emph{discriminant} of $A_D(m,\XS;g)$ in $B_D$, that is,
the complement
\begin{eqnarray*}
\Sigma_D
&=&
B_D\setminus A_D(m,\XS;g)
\\
&=&
\{(f_1,\cdots ,f_r)\in B_D\ \vert \ 
(f_1(\textbf{\textit{x}}),\cdots ,f_r(\textbf{\textit{x}}))\in \ZS
\mbox{ for some }\textbf{\textit{x}}\in \R^{m+1}\setminus \{{\bf 0}\}\}.
\end{eqnarray*}
\par
(ii)
Let  $Z_D\subset \Sigma_D\times \R^m$
denote 
{\it the tautological normalization} of 
 $\Sigma_D$
consisting of all pairs 
$(F,x)=((f_1,\ldots ,f_r),
(x_0,\ldots ,x_{m-1}))\in \Sigma_D\times\R^m$
such that 
$(f_1(\textbf{\textit{x}}),\cdots ,f_r(\textbf{\textit{x}}))\in \ZS$,
where we set
$\textbf{\textit{x}}=(x_0,\cdots ,x_{m-1},1).$
Projection on the first factor  gives a surjective map
$\pi_D :Z_D\to\Sigma_D.$
}
\end{dfn}

Our goal in this section is to construct, by means of the
{\it non-degenerate} simplicial resolution  of the discriminant, a spectral sequence converging to the homology of the space
$A_D(m,\XS;g)$.
\begin{dfn}\label{non-degenerate simp.}
{\rm
Let 
$(\SZ,{\pi}^{\Delta}_D:\SZ\to\Sigma_D)$ 
be the non-degenerate simplicial resolution of the surjective map
$\pi_D:Z_D\to \Sigma_D$ 
with the following natural increasing filtration as in Definition \ref{def: 2.1},
$
\SZ_0=\emptyset
\subset \SZ_1\subset 
\SZ_2\subset \cdots
\subset 
\SZ=\bigcup_{k= 0}^{\infty}\SZ_k.
$
}
\end{dfn}


By \cite[Lemma 2.2]{KY4}, the map
$\pi_D^{\Delta}:
\SZ\stackrel{\simeq}{\rightarrow}\Sigma_D$
is a homotopy equivalence,
which
extends to  a homotopy equivalence
$\pi_{d+}^{\Delta}:\SZ_+\stackrel{\simeq}{\rightarrow}{\Sigma_{D+}},$
where $X_+$ denotes the one-point compactification of a
locally compact space $X$.
Since
${{\cal X}_k^{D}}_+/{\SZ_{k-1}}_+
\cong (\SZ_k\setminus \SZ_{k-1})_+$,
we have a spectral sequence 
\begin{equation}\label{SS-original}
\big\{E_{t;D}^{k,s},
d_t:E_{t;D}^{k,s}\to E_{t;D}^{k+t,s+1-t}
\big\}
\Rightarrow
H^{k+s}_c(\Sigma_D,\Z),
\end{equation}
where
$E_{1;D}^{k,s}=H^{k+s}_c(\SZ_k\setminus\SZ_{k-1},\Z)$ and
$H_c^k(X,\Z)$ denotes the cohomology group with compact supports given by 
$
H_c^k(X,\Z)= H^k(X_+,\Z).
$
\par
By Alexander duality  there is a natural
isomorphism
\begin{equation}\label{Al}
H_k(A_D(m,\XS;g),\Z)\cong
H_c^{2N_D-k-1}(\Sigma_D,\Z)
\quad
\mbox{for }1\leq k\leq 2N_D-2.
\end{equation}
By
reindexing we obtain a
spectral sequence
\begin{eqnarray}\label{SS}
&&\big\{\E^{t;D}_{k,s}, \tilde{d}^{t}:\E^{t;D}_{k,s}\to \E^{t;D}_{k+t,s+t-1}
\big\}
\Rightarrow H_{s-k}(A_D(m,\XS;g),\Z)
\end{eqnarray}
if $s-k\leq 2N_D-2$,
where
$\E^{1;D}_{k,s}=
\tilde{H}^{2N_D+k-s-1}_c(\SZ_k\setminus\SZ_{k-1},\Z).$
\par\vspace{2mm}\par
For a connected space $X$, 
let $F(X,k)$ denote the configuration space  of distinct $k$ points in $X$.
The symmetric group $S_k$ of $k$ letters acts on $F(X,k)$ freely by permuting 
coordinates. Let $C_k(X)$ be the configuration space of unordered 
$k$-distinct  points in $X$ given by the orbit space
$C_k(X)=F(X,k)/S_k$.
Similarly, let
 $L_k\subset (\R^m\times \ZS)^k$ denote the subspace
defined by
$L_k=\{((x_1,s_1),\cdots ,(x_k,s_k))\ \vert \ 
x_j\in \R^m,s_j\in\ZS,
x_l\not= x_j\mbox{ if }l\not= j\}.$
The group $S_k$ also acts on $L_k$ by permuting coordinates, and let
$C_k$ denote the orbit space
\begin{equation}\label{Ck}
C_k=L_k/S_k.
\end{equation}
Note that $C_k$ is a cell complex of dimension
$(m+2r-2r_{\rm min})k$
(cf. (\ref{pri})).
%
\begin{lmm}\label{lemma: vector bundle*}
If  
$1\leq k\leq \db$,
$\SZ_k\setminus\SZ_{k-1}$
is homeomorphic to the total space of a real affine
bundle $\xi_{D,k}$ over $C_k$ with rank 
$l_{D,k}=2N_D-2kr+k-1$.
\end{lmm}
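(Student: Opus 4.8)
The plan is to analyze the bottom stratum of the non-degenerate simplicial resolution directly, using the fact that for $k \le \db$ the discriminant points counted with at most $k$ multiplicities behave like an honest configuration space. First I would note that a point of $\SZ_k \setminus \SZ_{k-1}$ is a triple consisting of an $r$-tuple $F = (f_1,\ldots,f_r) \in \Sigma_D$, a set of exactly $k$ distinct points $\{x_1,\ldots,x_k\}\subset\R^m$ (with $\textbf{\textit{x}}_j = (x_j,1)\in\R^{m+1}$) each satisfying $F(\textbf{\textit{x}}_j)\in\ZS$, together with an interior point $u$ of the $(k-1)$-simplex these $k$ points span in the resolution. The map sending such a triple to the configuration $\{(x_1,F(\textbf{\textit{x}}_1)),\ldots,(x_k,F(\textbf{\textit{x}}_k))\}$, together with the barycentric coordinates of $u$, exhibits $\SZ_k\setminus\SZ_{k-1}$ as a bundle over $C_k$ with fibre an open $(k-1)$-simplex times the affine space of $r$-tuples of polynomials satisfying the prescribed $k$ vanishing-type conditions; combining the open simplex (an affine space of rank $k-1$) with the affine fibre gives the claimed total rank $l_{D,k}$, once the dimension count is verified. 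The only subtlety in this identification is that we must know the embedding used to define the non-degenerate resolution puts the $k$ points into general position, which is guaranteed by $k\le\db$ via condition $(\ref{def: 2.1})_k$ for an appropriately chosen Veronese-type embedding; the distinctness of the $x_j$ is what keeps us inside the non-degenerate regime.

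Next I would carry out the dimension count for the affine fibre. Over a fixed configuration, imposing $F(\textbf{\textit{x}}_j)\in\ZS$ for a single point $x_j$ means $(f_1(\textbf{\textit{x}}_j),\ldots,f_r(\textbf{\textit{x}}_j))$ lies in $\ZS$; but in the stratum $\SZ_k\setminus\SZ_{k-1}$ with $k\le\rmin$ we are on the part of $\ZS$ where some $\rmin$-tuple of the coordinates vanishes, so up to the local structure of $\ZS$ this is $\rmin$ complex linear conditions, i.e. $2\rmin$ real conditions, on the values at $x_j$. Wait — more precisely, $\ZS$ has real codimension $2\rmin$ in $\C^r$ by the remark following (\ref{pri}), so requiring $F(\textbf{\textit{x}}_j)\in\ZS$ is $2\rmin$ real conditions, and the $k$ distinct points impose independent conditions on $F$ precisely because evaluation at distinct points is surjective onto the product (here $k\le\db\le d_i$ for all $i$ ensures each ${\cal H}_{d_i,m}\to\C^k$ is onto). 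This gives an affine subspace of $B_D$ of real dimension $2N_D - 2\rmin k$, and the choice of which primitive stratum of $\ZS$ one lands in is locally constant, so one gets an affine bundle, not merely a fibration. Adding the open $(k-1)$-simplex contributes $k-1$ to the rank, yielding $l_{D,k} = 2N_D - 2\rmin k + (k-1) = 2N_D - 2kr + k - 1$ — hmm, this requires reconciling $\rmin$ versus $r$ in the formula, so I would double-check against (\ref{dim rmin}) and the statement, adjusting the bookkeeping so the stated $l_{D,k}$ emerges; likely the relevant codimension contribution is $r$ per point because one is prescribing all $r$ homogeneous coordinates up to the structure of $\ZS$, and the configuration space $C_k$ already absorbs the $2r-2\rmin$ discrepancy in its own dimension (recall $C_k$ has dimension $(m+2r-2\rmin)k$).

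The key steps in order are therefore: (1) describe points of $\SZ_k\setminus\SZ_{k-1}$ as (configuration, polynomial tuple, barycentric coordinate) triples, valid because $k\le\db$ forces general position; (2) define the projection to $C_k$ and check local triviality by a standard argument — over a small neighbourhood in $C_k$ one trivializes both the simplex coordinates and the affine space of admissible $F$; (3) verify that the fibre over a configuration is an affine space by checking that evaluation $A_D(m) \to (\C^r)^k$ restricted to the relevant sections is a surjective affine-linear map with the prescribed target $\ZS^k$-type conditions cutting out an affine subspace, again using $k\le\db$; (4) assemble the rank count to match $l_{D,k}$. The main obstacle will be step (3) together with the precise dimension bookkeeping in step (4): one must be careful that the conditions defining membership in $\ZS$ (a union of linear subspaces, by (\ref{pri})) restrict, on the open stratum corresponding to exactly $k$-fold degeneracy with $k\le\rmin$, to a single linear-subspace branch locally, so that the total space is genuinely an affine bundle and the rank is constant; handling the passage from the non-linear variety $\ZS$ to its local linear model, and confirming the stated value $l_{D,k} = 2N_D - 2kr + k - 1$ rather than a variant, is where the real work lies. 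I expect this to follow the template of \cite[Lemma 2.2]{KY4} and the analogous lemma in \cite{Mo2}, adapted to the toric target.
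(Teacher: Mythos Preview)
Your overall approach matches the paper's: project $\SZ_k\setminus\SZ_{k-1}$ to $C_k$ via $(F,u)\mapsto\{(x_j,F(\textbf{\textit{x}}_j))\}_{j=1}^k$, then identify the fibre as (open $(k-1)$-simplex)$\times$(affine space of admissible $F$). But your dimension count is tangled because you have not fully absorbed what the base $C_k$ records. A point of $C_k$ is an unordered set $\{(x_j,s_j)\}$ with $x_j\in\R^m$ \emph{and} $s_j\in\ZS$. Over a fixed base point the fibre condition on $F\in B_D$ is therefore the \emph{equality} $F(\textbf{\textit{x}}_j)=s_j$, i.e.\ $f_t(\textbf{\textit{x}}_j)=s_{t,j}$ for all $1\le t\le r$, $1\le j\le k$. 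These are $r$ complex-linear conditions per point $x_j$, and for $k\le\db$ they are independent (evaluation of polynomials in $B_t$ at $k\le d_t$ distinct points is surjective, as in \cite[Lemma 4.3]{AKY1}). So the fibre is an affine space of real dimension $2N_D-2kr$, times the open $(k-1)$-simplex, giving rank $l_{D,k}=2N_D-2kr+k-1$ immediately. No $\rmin$ appears in the fibre; it sits entirely in the base via $\dim C_k=(m+2r-2\rmin)k$. There is nothing to linearize: $\ZS$ lives in the base, and the fibre is cut out by honest linear equations. Your eventual remark that ``$C_k$ already absorbs the $2r-2\rmin$ discrepancy'' is exactly right and should be the starting point, not an afterthought.

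Two smaller corrections. First, $k\le\db$ is \emph{not} what guarantees general position of the $k$ points in the resolution --- that is built into the construction of the non-degenerate simplicial resolution (Definition~\ref{def: 2.1}, Remark~\ref{Remark: non-degenerate}) and holds for all $k$; rather, $k\le\db$ is precisely what makes the $k$ evaluation conditions on each $f_t$ independent, so that the fibre has constant dimension. Second, your worry about ``which primitive stratum of $\ZS$ one lands in'' and ``passing to a local linear model'' is a non-issue for the same reason: that data is part of the point $s_j\in\ZS$ in the base $C_k$, not a condition on the fibre.
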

\begin{proof}
The argument is exactly analogous to the one in the proof of  
\cite[Lemma 4.4]{AKY1}. Namely, an element of $\SZ_k\setminus\SZ_{k-1}$ is represented by $(F,u)=((f_1,\cdots ,f_r),u)$, where 
$F=(f_1,\cdots ,f_r)$ is a 
$r$-tuple of polynomials in $\Sigma_D$ and $u$ is an element of 
the interior of
the span of the images of $k$ distinct points $x_1,\cdots, x_k\in \R^m$ 
such that
$F(\textbf{\textit{x}}_j)=(f_1(\textbf{\textit{x}}_j),\cdots ,f_r(\textbf{\textit{x}}_j))\in \ZS$ for each $1\leq j\leq k$, 
under a suitable embedding, where
we set $\textbf{\textit{x}}_j=(x_j,1)\in\R^{m+1}$.
Let $\pi_k :{\cal X}^{D}_k\setminus
{\cal X}^{D}_{k-1}\to C_k$
be the projection map
$((f_1,\cdots ,f_r),u) \mapsto \{(x_1,F(\textbf{\textit{x}}_1)),\dots, (x_k,F(\textbf{\textit{x}}_k))\}$. 
(Note that the points $x_i$ are uniquely determined by $u$ by the construction of the non-degenerate simplicial resolution.)
\par
Next, let $c=\{(x_j,s_j)\}_{j=1}^k\in C_k$
$(x_j\in \R^m$, $s_j\in \ZS)$ be any fixed element and consider the fibre  $\pi_k^{-1}(c)$.
For this purpose, for each $1\leq j\leq k$ let us consider the condition  
\begin{equation}\label{equ: pik}
F(\textbf{\textit{x}}_j)=(f_1(\textbf{\textit{x}}_j),\cdots ,f_r(\textbf{\textit{x}}_j))=s_j
\quad
\Leftrightarrow
\quad
f_t(\textbf{\textit{x}}_j)=s_{t,j}
\quad
\mbox{for }1\leq t\leq r,
\end{equation}
where we set $s_j=(s_{1,j},\cdots ,s_{r,j})$.
In general, 
the condition $f_t(\textbf{\textit{x}}_j)=s_{t,j}$ gives
one  linear condition on the coefficients of $f_t$,
which determines an affine hyperplane in $B_t$. 
Since $\{\textbf{\textit{x}}_j\}_{j=1}^k$ is mutually distinct, 
if $1\leq k\leq \db$,
by \cite[Lemma 4.3]{AKY1} the condition (\ref{equ: pik}) 
produces exactly $k$ independent conditions on the coefficients of $f_t$.
Thus  the space of polynomials $f_t$ in $B_t$ which satisfies
(\ref{equ: pik})
is the intersection of $k$ affine hyperplanes in general position
and it has codimension $k$ in $B_t$.
Hence, if $1\leq k\leq \db$
the fibre $\pi_k^{-1}(c)$ is homeomorphic  to the product of open $(k-1)$-simplex
 with the real affine space of dimension
 $2\sum_{t=1}^r\big(\binom{d_t+m-1}{m}-k\big)=2N_D-2kr.$
Thus $\pi_k$ is a real affine bundle over $C_k$ of rank $l_{D,k}
=2N_D-2kr+k-1$.
\end{proof}

\begin{lmm}\label{lemma: E11}
If $1\leq k\leq  \db$, there is a natural isomorphism
$$
\E^{1;D}_{k,s}\cong
\tilde{H}_c^{2rk-s}(C_k,\pm \Z).
$$
\end{lmm}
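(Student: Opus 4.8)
The plan is to compute the cohomology with compact supports of the affine bundle $\xi_{D,k}$ from Lemma \ref{lemma: vector bundle*}, using the Thom isomorphism, and then identify the resulting twisted cohomology of $C_k$ with the $E^1$-term. First I would recall from Lemma \ref{lemma: vector bundle*} that for $1\leq k\leq \db$ the stratum $\SZ_k\setminus\SZ_{k-1}$ is the total space of a real affine bundle $\xi_{D,k}$ of rank $l_{D,k}=2N_D-2kr+k-1$ over $C_k$. An affine bundle is homotopy equivalent to its underlying vector bundle, and for cohomology with compact supports one has a Thom isomorphism
\begin{equation*}
\tilde{H}^{j}_c(\SZ_k\setminus\SZ_{k-1},\Z)\cong \tilde{H}^{j-l_{D,k}}_c(C_k,\mathcal{L}),
\end{equation*}
where $\mathcal{L}$ is the orientation local system of the bundle $\xi_{D,k}$ twisted appropriately. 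The key point is to determine this local system: the fibre directions split into the ``affine space'' part of dimension $2N_D-2kr$ (which is canonically oriented, being a complex-linear condition on coefficients) and the open $(k-1)$-simplex part, whose orientation local system over $C_k$ is exactly the sign representation of $S_k$ pulled back along the covering $F(\R^m\times\ZS,k)\to C_k$. Hence $\mathcal{L}=\pm\Z$ in the notation of the paper (the local system associated to the sign representation), and this matches the twisting that appears in \cite[Lemma 4.4, Lemma 4.5]{AKY1}.

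Next I would substitute $j=2N_D+k-s-1$, which is the degree appearing in the definition $\E^{1;D}_{k,s}=\tilde{H}^{2N_D+k-s-1}_c(\SZ_k\setminus\SZ_{k-1},\Z)$. Then
\begin{equation*}
j-l_{D,k}=(2N_D+k-s-1)-(2N_D-2kr+k-1)=2rk-s,
\end{equation*}
so the Thom isomorphism gives $\E^{1;D}_{k,s}\cong\tilde{H}^{2rk-s}_c(C_k,\pm\Z)$, which is precisely the claimed formula. The bookkeeping of indices is routine once the rank of the bundle and the twisting coefficient are pinned down.

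The main obstacle, as usual in this type of argument, is the careful identification of the orientation local system $\pm\Z$ and the verification that the Thom isomorphism for Borel–Moore homology / compactly supported cohomology applies to this (merely semi-algebraic, non-compact) affine bundle. One must check that $C_k$ is a reasonable space — here it is a cell complex of dimension $(m+2r-2\rmin)k$, as noted after \eqref{Ck} — and that $\xi_{D,k}$ is a genuine (semi-algebraic, locally trivial) affine bundle, which is exactly the content of the proof of Lemma \ref{lemma: vector bundle*}; this is where the hypothesis $k\leq\db$ is used, via \cite[Lemma 4.3]{AKY1}, to guarantee that the $k$ evaluation conditions on each $f_t$ are independent so that the fibre dimension is constant. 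Granting all of this, I would simply invoke the compactly-supported Thom isomorphism for the (twisted) rank-$l_{D,k}$ affine bundle and read off the stated isomorphism; the proof is a direct transcription of the argument for \cite[Lemma 4.5]{AKY1}, with the dimension count adapted to the present values of $N_D$ and $l_{D,k}$.
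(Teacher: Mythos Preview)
Your proposal is correct and follows essentially the same approach as the paper: invoke Lemma \ref{lemma: vector bundle*} to identify the stratum with the total space of the affine bundle $\xi_{D,k}$, apply the Thom isomorphism (equivalently, identify the one-point compactification with the Thom space), carry out the index computation $(2N_D+k-s-1)-l_{D,k}=2rk-s$, and recognize the twisting as the sign local system $\pm\Z$ coming from the open $(k-1)$-simplex factor. The paper's proof is slightly terser and cites \cite{Va} rather than \cite{AKY1} for the identification of the twisted coefficients, but the argument is the same.
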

\begin{proof}
Suppose that $1\leq k\leq \db$.
By Lemma \ref{lemma: vector bundle*}, there is a
homeomorphism
$(\SZ_k\setminus\SZ_{k-1})_+\cong T(\xi_{D,k}),$
where $T(\xi_{D,k})$ denotes the Thom space of
$\xi_{D,k}$.
Since 
$$
(2N_D+k-s-1)-l_{D,k}
=(2N_D+k-s-1)-(2N_D-2kr+k-1)
=
2rk-s,
$$
by using the Thom isomorphism theorem we obtain
a natural isomorphism
$$
\E^{1;D}_{k,s}
\cong \tilde{H}^{2N_D+k-s-1}(T(\xi_{D,k}),\Z)
\cong
\tilde{H}_c^{2rk-s}(C_k,\pm \Z),
$$
where the twisted coefficient system $\pm \Z$
appears in the computation of the cohomology of the Thom space
induced by the
sign representation of the symmetric group
as in  
\cite[page 37-38, page 114 and 254]{Va}).
This completes the proof.
\end{proof}
\section{Truncated spectral sequences.}\label{section 4}
In this section, we prove a  key result (Theorem \ref{thm: sd}) 
about the homology stability of \lq\lq stabilization 
maps\rq\rq \ $s_D:A_D(m,\XS;g)\to A_{D+\textbf{\textit{a}}}(m,\XS;g)$.

\begin{dfn}\label{dfn: 4.1}
{\rm
Let $X^{\Delta}$ denote the  truncated (after $\db$-th term) simplicial resolution 
of $\Sigma_D$ with its natural filtration as in Definition \ref{def: 2.3},
$$
\emptyset =X^{\Delta}_0\subset X^{\Delta}_1\subset
\cdots \subset  X^{\Delta}_{\db}\subset 
X^{\Delta}_{\db+1}=
X^{\Delta}_{\db+2}=
\cdots
 =X^{\Delta},
$$
where $X^{\Delta}_k=\SZ_k$ if $k\leq \db$ and
$X^{\Delta}_k=X^{\Delta}$ if $k\geq \db+1$.
}
\end{dfn}

\begin{rmk}\label{rmk: truncated}
{\rm
Note that our notation  
$X^{\Delta}$ conflicts with that of  \cite{Mo3} and Definition 2.3, where
${\cal X}^{\Delta}$ denotes the non-degenerate simplicial resolution. 
}
\end{rmk}
By \cite[Lemma 2.5]{KY4},
there is a homotopy equivalence
$\pi^{\Delta}:X^{\Delta} \stackrel{\simeq}{\rightarrow}
\Sigma_D$.
Hence, 
by using the filtration on $X^{\Delta}$ given in Definition \ref{dfn: 4.1}, 
we have a spectral sequence
\begin{equation}\label{SS-*}
\big\{\hat{E}_{t;D}^{k,s},
d_t:\hat{E}_{t;D}^{k,s}\to \hat{E}_{t;D}^{k+t,s+1-t}
\big\}
\Rightarrow
H^{k+s}_c(\Sigma_D,\Z),
\end{equation}
where
$\hat{E}_{1;D}^{k,s}=\tilde{H}^{k+s}_c(X^{\Delta}_k\setminus X^{\Delta}_{k-1},\Z)$.
Then by reindexing and using Alexander duality
(\ref{Al}), 
we obtain the truncated spectral sequence
(cf. \cite[(4.1)]{KY4}, \cite{Mo3})
\begin{eqnarray}\label{SSS}
&&\big\{E^t_{k,s}, d^t:E^t_{k,s}\to E^t_{k+t,s+t-1}\big\}
\Rightarrow H_{s-k}(A_D(m,\XS;g),\Z)
\end{eqnarray}
if $s-k\leq 2N_D-2$,
where
$E^1_{k,s}=
\tilde{H}^{2N_D+k-s-1}_c(X^{\Delta}_k\setminus X^{\Delta}_{k-1},\Z).$

\begin{lmm}\label{lemma: E1}
\begin{enumerate}
\item[$\I$]
If $1\leq k\leq  \db$, there is a natural isomorphism
$$
E^1_{k,s}\cong
\tilde{H}_c^{2rk-s}(C_k,\pm \Z).
$$
\item[$\II$] 
$E^1_{k,s}=0$ if $k<0$, or if $k\geq \db+2$, or if $k=0$ and $s\not= 2N_D-1$.
\item[$\III$] If $1\leq k\leq \db$,
$E^1_{k,s}=0$ for any $s\leq (2r_{\rm min}-m)k-1$.
\item[$\IV$] If $k=\db+1$,
$E^1_{\db+2,s}=0$ for any $s\leq (2r_{\rm min}-m)\db-1$.
\end{enumerate}
\end{lmm}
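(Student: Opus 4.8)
The four assertions are of quite different natures, so I would treat them in turn. Part (i) is immediate: when $1\le k\le\db$ we are still in the range covered by Lemma \ref{lemma: E11}, and the truncated filtration agrees with the non-degenerate one in this range (by Definition \ref{dfn: 4.1}, $X^{\Delta}_k=\SZ_k$ for $k\le\db$), so $E^1_{k,s}=\E^{1;D}_{k,s}\cong\tilde H^{2rk-s}_c(C_k,\pm\Z)$ with no further work. Part (ii) is likewise mostly bookkeeping: for $k<0$ there is nothing in the filtration; for $k=0$ we have $X^{\Delta}_0=\emptyset$, so $X^{\Delta}_0\setminus X^{\Delta}_{-1}=\emptyset$ and the only surviving term would come from the base of the reindexing — I would check that $E^1_{0,s}=\tilde H^{2N_D-s-1}_c(\emptyset)=0$ unless the degree shift forces $s=2N_D-1$, exactly as stated; and for $k\ge\db+2$ we have $X^{\Delta}_k=X^{\Delta}_{k-1}=X^{\Delta}$, hence $X^{\Delta}_k\setminus X^{\Delta}_{k-1}=\emptyset$, so $E^1_{k,s}=0$.

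The real content is in (iii) and (iv), which are vanishing statements coming from dimension counting on the configuration-type complexes. For (iii), by part (i) I have $E^1_{k,s}\cong\tilde H^{2rk-s}_c(C_k,\pm\Z)$, and $C_k=L_k/S_k$ is a cell complex of dimension $(m+2r-2r_{\rm min})k$ as noted after \eqref{Ck}. Compactly-supported cohomology $\tilde H^{j}_c(C_k,\pm\Z)$ vanishes for $j$ greater than this dimension; setting $j=2rk-s$, the condition $2rk-s>(m+2r-2r_{\rm min})k$ rearranges to $s<(2r_{\rm min}-m)k$, i.e. $s\le(2r_{\rm min}-m)k-1$, which is exactly the claimed range. (I should be slightly careful that $C_k$, being an open cell complex / semialgebraic set of that dimension, really has $H^j_c=0$ above its dimension — this is standard for such spaces, and is the same fact used implicitly in \cite{AKY1}.) For (iv) the situation is the boundary case $k=\db+1$: here $X^{\Delta}_{\db+1}\setminus X^{\Delta}_{\db}$ is obtained from the non-degenerate stratum over the $(\db+1)$-point (and higher) locus by truncating — collapsing the $\db$-skeleta and coning. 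I would identify $X^{\Delta}_{\db+1}\setminus X^{\Delta}_{\db}$ with (the one-point compactification of) the relevant truncated stratum, whose fibre over each point of the appropriate configuration space is a cone on a $(\db-1)$-skeleton of a simplex, hence an open cone of dimension $\db$; combining with the base, which is governed by configurations of at least $\db+1$ points in $\R^m$ hitting $\ZS$, I get an upper bound on the total dimension of the form $2N_D-2(\db+1)r+(\text{base dim})+\db$, and translating through the reindexing $E^1_{\db+2,s}=\tilde H^{2N_D+(\db+2)-s-1}_c(X^{\Delta}_{\db+2}\setminus X^{\Delta}_{\db+1})$ — wait, here I must be careful: the index in (iv) is $E^1_{\db+2,s}$, and by (ii) $X^{\Delta}_{\db+2}=X^{\Delta}$ with $X^{\Delta}_{\db+1}=X^{\Delta}$ too, so actually $E^1_{\db+2,s}=0$ identically, and the real assertion must concern $E^1_{\db+1,s}$; I would follow the indexing conventions of \cite[(4.1)]{KY4} precisely here and then the vanishing range $s\le(2r_{\rm min}-m)\db-1$ falls out of the same dimension estimate as in (iii) applied one step further.

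The main obstacle is (iv): unlike $k\le\db$, where Lemma \ref{lemma: vector bundle*} gives a clean affine-bundle description of the stratum, the truncated stratum in degree $\db+1$ mixes the honest simplicial part over the $(\db+1)$-point locus with the added cones over $Y(\db)$, so one cannot simply invoke a Thom isomorphism. I expect the argument to proceed by stratifying $X^{\Delta}_{\db+1}\setminus X^{\Delta}_{\db}$ according to how many of the $\db+1$ (or more) collision points are involved, bounding the compactly-supported cohomological dimension of each piece separately using the configuration-space dimension count from \eqref{pri} together with the fact that the non-degeneracy condition $(\ref{def: 2.1})_k$ still supplies enough independent linear conditions on the coefficients for $k\le\db$ but that the $(\db+1)$-st point need not add a new independent condition — which is precisely why the resolution is truncated there. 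Carrying the bookkeeping through the reindexing is routine once the stratification is set up, so I would present (iii) in full and (iv) as the analogous but slightly more delicate estimate, citing \cite{Mo3} and \cite{KY4} for the truncation formalism.
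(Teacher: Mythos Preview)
Your treatment of (i)--(iii) matches the paper exactly: (i) is Lemma \ref{lemma: E11} together with $X^{\Delta}_k=\SZ_k$ for $k\le\db$; (ii) is bookkeeping from the filtration; (iii) is the dimension count on $C_k$. You are also right that the statement of (iv) contains a typo: the paper's own proof computes $E^1_{\db+1,s}$, not $E^1_{\db+2,s}$ (which, as you note, vanishes identically by (ii)).

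Where you diverge is in the argument for (iv). You propose stratifying $X^{\Delta}_{\db+1}\setminus X^{\Delta}_{\db}$ by the number of collision points and bounding the compactly-supported cohomological dimension piece by piece. This could be made to work, but the paper's route is considerably shorter: it simply invokes \cite[Lemma 2.6]{KY4}, which says that for the truncated resolution the added stratum satisfies
\[
\dim\big(X^{\Delta}_{\db+1}\setminus X^{\Delta}_{\db}\big)=\dim\big(\SZ_{\db}\setminus\SZ_{\db-1}\big)+1.
\]
Since $\dim(\SZ_k\setminus\SZ_{k-1})=l_{D,k}+\dim C_k=2N_D-(2r_{\rm min}-m-1)k-1$ from Lemma \ref{lemma: vector bundle*}, this gives $\dim(X^{\Delta}_{\db+1}\setminus X^{\Delta}_{\db})=2N_D-(2r_{\rm min}-m-1)\db$, and the vanishing range $s\le(2r_{\rm min}-m)\db-1$ drops out of the same ``degree exceeds dimension'' argument you used for (iii). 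No stratification or analysis of cones over skeleta is needed; the single dimension bound already encapsulates the effect of the truncation. Your proposed approach would reprove a special case of \cite[Lemma 2.6]{KY4} by hand --- correct in principle, but redundant once you cite that lemma.
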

\begin{proof}
Since 
$X^{\Delta}_k\setminus X^{\Delta}_{k-1}
=\SZ_k\setminus\SZ_{k-1}$ for $1\leq k\leq \db$, the assertion (i) follows from
Lemma \ref{lemma: E11}. 
Next, because $\XD_0=\emptyset$ and $X^{\Delta}=\XD_k$ for $k\geq \db+2$,
the assertion (ii) easily follows.
Now  we assume that $1\leq k\leq \db$ and try to prove (iii).
Since $\dim C_k=(m+2r-2\rmin)k$,
$2rk-s>\dim C_k$ iff
%
$s\leq (2r_{\rm min}-m)k-1$ and (iii)  follows from (i).
It remains to show (iv).
An easy computation shows that
\begin{eqnarray*}
\dim (
\SZ_{k}\setminus \SZ_{k-1})&=&
(2N_D-2kr+k-1)+\dim C_k
\\
&=&(2N_D-2kr+k-1)+(m+2r-2r_{\rm min})k
\\
&=&
2N_D-(2r_{\rm min}-m-1)k-1.
\end{eqnarray*}
Since
$\dim (\XD_{\db+1}\setminus \XD_{\db})=
\dim (
\SZ_{\db}\setminus \SZ_{\db -1})+1$
by \cite[Lemma 2.6]{KY4}, 
$$
\dim (\XD_{\db+1}\setminus \XD_{\db})
=2N_D-(2r_{\rm min}-m-1)\db.
$$
Since
$
E^1_{\db+1,s}=\tilde{H}_c^{2N_D+\db-s}(\XD_{\db +1}\setminus \XD_{\db},\Z)
$
and 
$$
2N_D+\db-s>\dim (\XD_{\db+2}\setminus \XD_{\db+1})\  
\Leftrightarrow
\ s\leq (2r_{\rm min}-m)\db -1,
$$
 we see that
$E^1_{\db+1,s}=0$ for $s\leq (2r_{\rm min}-m)\db-1$.
\end{proof}
Let $\textbf{\textit{a}}=(a_1,\cdots ,a_r)\in (\Z_{\geq 1})^r$
be fixed $r$-tuple of positive integers such that
\begin{equation}\label{number: a}
\sum_{k=1}^ra_k{\bf n}_k={\bf 0}.
\end{equation}
We set
$D+\textbf{\textit{a}}=(d_1+a_1,\cdots ,d_r+a_r)$.
Similarly,
let $Y^{\Delta}$ denote the  truncated (after $\db$-th term)
simplicial resolution 
of $\Sigma_{D+\textbf{\textit{a}}}$ with its natural filtration
$$
\emptyset =
Y^{\Delta}_0\subset Y^{\Delta}_1\subset
\cdots \subset  Y^{\Delta}_{\db}\subset Y^{\Delta}_{\db+1}=
Y^{\Delta}_{\db+2}=
\cdots
 =Y^{\Delta},
$$
where $Y^{\Delta}_k=\SZd_k$ if $k\leq \db$ and
$Y^{\Delta}_k=Y^{\Delta}$ if $k\geq \db+1$.
\par
By \cite[Lemma 2.5]{KY4}, 
there is a homotopy equivalence
$^{\p}\pi^{\Delta}:Y^{\Delta} \stackrel{\simeq}{\rightarrow}
\Sigma_{D+\textbf{\textit{a}}}$.
Hence, by using the same method as above,
we obtain a spectral sequence
\begin{eqnarray}\label{SSSS}
&&\big\{\ ^{\p}E^t_{k,s},\  ^{\p}d^t:E^t_{k,s}\to 
\ ^{\p}E^t_{k+t,s+t-1}\big\}
\Rightarrow H_{s-k}(A_{D+\textbf{\textit{a}}}(m,\XS;g),\Z)
\end{eqnarray}
if $s-k\leq 2N_{D+\textbf{\textit{a}}}-2$,
where
$^{\p}E^1_{k,s}=
\tilde{H}^{2N_{D+\textbf{\textit{a}}}+k-s-1}_c(Y^{\Delta}_k\setminus Y^{\Delta}_{k-1},\Z).$
\par
Applying again the same argument  we obtain the following
result.

\begin{lmm}\label{lemma: E1*}
\begin{enumerate}
\item[$\I$]
If $1\leq k\leq  \db$, there is a natural isomorphism
$$
^{\p}E^1_{k,s}\cong
\tilde{H}_c^{2rk-s}(C_k,\pm \Z).
$$
\item[$\II$]
$^{\p}E^1_{k,s}=0$ if $k<0$, or if $k\geq \db+2$, or if $k=0$ and 
$s\not= 2N_{D+\textbf{\textit{a}}}-1$.
\item[$\III$]
If $1\leq k\leq \db$,
$^{\p}E^1_{k,s}=0$ for any $s\leq (2r_{\rm min}-m)k-1$.
\item[$\IV$]
If $k=\db +1$,
$^{\p}E^1_{\db+2,s}=0$ for any $s\leq (2r_{\rm min}-m)\db -1$.
\qed
\end{enumerate}
\end{lmm}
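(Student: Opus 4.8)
**Proof proposal for Lemma \ref{lemma: E1*}.**

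The plan is to observe that this lemma is the exact analogue of Lemma \ref{lemma: E1} with the degree $D$ replaced by $D+\textbf{\textit{a}}$, and that every ingredient used in the proof of Lemma \ref{lemma: E1} is insensitive to this replacement. Concretely, the constructions of the truncated simplicial resolution $Y^{\Delta}$ of $\Sigma_{D+\textbf{\textit{a}}}$, its filtration, the reindexed spectral sequence (\ref{SSSS}), and the homotopy equivalence $^{\p}\pi^{\Delta}:Y^{\Delta}\stackrel{\simeq}{\rightarrow}\Sigma_{D+\textbf{\textit{a}}}$ are all parallel to the corresponding objects for $D$; these are already in place in the text preceding the statement. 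So the content to verify is that the four assertions hold with the appropriate numerical substitutions, and I would carry this out by reusing, nearly verbatim, the arguments of Lemmas \ref{lemma: vector bundle*}, \ref{lemma: E11} and \ref{lemma: E1}.

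First, for (i): one notes that since $d_k+a_k\geq d_k\geq 1$ for each $k$, the integer $\db$ (the minimum of the $d_k$) still satisfies $\db\leq \min\{d_k+a_k\}$, so for $1\leq k\leq \db$ the hypothesis of Lemma \ref{lemma: vector bundle*} (applied with degree $D+\textbf{\textit{a}}$) is met; hence $Y^{\Delta}_k\setminus Y^{\Delta}_{k-1}=\SZd_k\setminus\SZd_{k-1}$ is the total space of a real affine bundle over the same configuration-type space $C_k$, this time of rank $l_{D+\textbf{\textit{a}},k}=2N_{D+\textbf{\textit{a}}}-2kr+k-1$. Applying the Thom isomorphism as in Lemma \ref{lemma: E11}, the shift
$$
(2N_{D+\textbf{\textit{a}}}+k-s-1)-l_{D+\textbf{\textit{a}},k}=2rk-s
$$
is again independent of the degree, which yields $^{\p}E^1_{k,s}\cong \tilde{H}_c^{2rk-s}(C_k,\pm\Z)$. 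Assertion (ii) follows from $Y^{\Delta}_0=\emptyset$, $Y^{\Delta}=Y^{\Delta}_k$ for $k\geq\db+2$, and the fact that $\tilde{H}^{2N_{D+\textbf{\textit{a}}}-1-s}_c$ of a point-type bottom stratum vanishes unless $s=2N_{D+\textbf{\textit{a}}}-1$, exactly as before.

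For (iii) and (iv), I would repeat the dimension count from the proof of Lemma \ref{lemma: E1}. Since $\dim C_k=(m+2r-2\rmin)k$ does not depend on the degree, $2rk-s>\dim C_k$ is again equivalent to $s\leq(2\rmin-m)k-1$, giving (iii) from (i). For (iv), one computes as before
$$
\dim(\SZd_{\db}\setminus\SZd_{\db-1})=2N_{D+\textbf{\textit{a}}}-(2\rmin-m-1)\db-1,
$$
and by \cite[Lemma 2.6]{KY4} the top stratum $Y^{\Delta}_{\db+1}\setminus Y^{\Delta}_{\db}$ has dimension one larger, namely $2N_{D+\textbf{\textit{a}}}-(2\rmin-m-1)\db$; since $^{\p}E^1_{\db+1,s}=\tilde{H}_c^{2N_{D+\textbf{\textit{a}}}+\db-s}(Y^{\Delta}_{\db+1}\setminus Y^{\Delta}_{\db},\Z)$, this group vanishes when $2N_{D+\textbf{\textit{a}}}+\db-s$ exceeds that dimension, i.e.\ when $s\leq(2\rmin-m)\db-1$, which is (iv). I do not anticipate a genuine obstacle here; the only point requiring a little care is bookkeeping the degree-dependent constants $N_{D+\textbf{\textit{a}}}$ and $l_{D+\textbf{\textit{a}},k}$ and confirming that all the degree-dependence cancels in the cohomological shifts — which it does, precisely because these shifts measure codimensions of strata inside the ambient affine space $B_{D+\textbf{\textit{a}}}$ whose dimension is itself $N_{D+\textbf{\textit{a}}}$.
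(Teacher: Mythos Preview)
Your proposal is correct and follows exactly the approach indicated in the paper: the authors give no separate proof, stating only that ``Applying again the same argument we obtain the following result,'' and your write-up simply makes explicit the substitutions $D\mapsto D+\textbf{\textit{a}}$, $N_D\mapsto N_{D+\textbf{\textit{a}}}$ in the proofs of Lemmas \ref{lemma: vector bundle*}, \ref{lemma: E11} and \ref{lemma: E1}, together with the observation that $\db\leq\min_k(d_k+a_k)$ since each $a_k\geq 1$.
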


\begin{dfn}\label{Def: 4.4}
{\rm
Let $g\in \Alg_D^*(\RP^{m-1},\XS)$ be a fixed algebraic map,
and let $(g_1,\cdots ,g_r)\in A_D(m-1,\XS)$ be its fixed representative.
Let
 $\textbf{\textit{a}}=(a_1,\cdots,a_r)\in (\Z_{\geq 1})^r$ be a fixed $r$-tuple of positive integers satisfying the condition (\ref{number: a}).
If we set $\tilde{g}=\sum_{k=0}^mz_k^2,$ we see that the tuple
$((\tilde{g}\vert_{z_m=0})^{a_1}g_1,\cdots ,(\tilde{g}\vert_{z_m=0})^{a_r}g_r)$ can also be chosen as a representative of the
map $g\in \Alg_{D+\textbf{\textit{a}}}^*(\RP^{m-1},\XS)$.
So one can define a stabilization map
$s_D:A_D(m,\XS;g)\to A_{D+\textbf{\textit{a}}}(m,\XS;g)$ by
\begin{equation}\label{sd}
s_D(f_1,\cdots ,f_r)=(\tilde{g}^{a_1}f_1,\cdots ,\tilde{g}^{a_r}f_r)
\quad 
\mbox{ for }\quad (f_1,\cdots ,f_r)\in A_D(m,\XS;g).
\end{equation}
Because there is a commutative diagram
\begin{equation}\label{diagram_sd}
\begin{CD}
A_D(m,\XS;g) @>s_D>> A_{D+\textbf{\textit{a}}}(m,\XS;g)
\\
@V{i_D^{\p}}VV @V{i_{D+\textbf{\textit{a}}}^{\p}}VV
\\
F(\RP^m,\XS;g) @>=>> F(\RP^m,\XS;g)
\end{CD}
\end{equation}
it induces a map
\begin{equation}\label{sdinfty}
s_{D,\infty}=\lim_{k\to\infty} s_{D+k\textbf{\textit{a}}}:
A_{D,\infty}(m,\XS;g)
\to F(\RP^m,\XS;g)\simeq\Omega^m_0\XS,
\end{equation}
where 
$A_{D,\infty}(m,\XS;g)$ denotes
the colimit 
$\dis \lim_{k\to \infty}A_{D+k\textbf{\textit{a}}}(m,\XS;g)$
induced from the stabilization maps 
$s_{D+k\textbf{\textit{a}}}$'s $(k\geq 0)$.}
\end{dfn}
\begin{thm}\label{thm: AKY1-stable}
If $2\leq m\leq 2(\rmin -1)$, the map
$\dis s_{D,\infty}:A_{D,\infty}(m,\XS;g)
\stackrel{\simeq}{\rightarrow} \Omega^m_0\XS$
is a homology equivalence.
\end{thm}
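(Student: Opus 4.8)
The plan is to compare the spectral sequence \eqref{SSS} for $\Sigma_D$ with the analogous one \eqref{SSSS} for $\Sigma_{D+\textbf{\textit{a}}}$ via the map induced by the stabilization $s_D$, and to show the resulting homomorphism of spectral sequences is an isomorphism in a range of total degrees that grows without bound as we iterate $s_D$. First I would observe that $s_D$ sends the discriminant $\Sigma_D$ into $\Sigma_{D+\textbf{\textit{a}}}$ (multiplying a tuple by powers of $\tilde g=\sum z_k^2$ does not change the set of real zeros on $\R^{m+1}\setminus\{\mathbf 0\}$, since $\tilde g$ has no real zeros there) and is compatible with the tautological normalizations and hence with the non-degenerate simplicial resolutions truncated after the $\db$-th term; this gives a map of filtered spaces $X^{\Delta}\to Y^{\Delta}$ and thus a morphism $\{E^t_{k,s}\}\to\{{}^{\p}E^t_{k,s}\}$ of the truncated spectral sequences. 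By Lemmas~\ref{lemma: E1} and \ref{lemma: E1*}, for $1\le k\le\db$ both $E^1$-terms are canonically $\tilde H_c^{2rk-s}(C_k,\pm\Z)$ and the induced map on these terms is the identity (the base $C_k$ is the same configuration space, independent of $D$; only the fibre dimension $l_{D,k}=2N_D-2kr+k-1$ changes, and the Thom isomorphism intertwines the two identifications). The only possibly nonzero columns with $k\ge\db+1$ are $k=\db+1$, and by Lemma~\ref{lemma: E1}(iv) and Lemma~\ref{lemma: E1*}(iv) that column vanishes for $s\le(2\rmin-m)\db-1$.

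Next I would pass to the colimit. Writing $D+k\textbf{\textit{a}}$ and letting $k\to\infty$, the spaces $A_{D+k\textbf{\textit{a}}}(m,\XS;g)$ stabilize to $A_{D,\infty}(m,\XS;g)$, and since homology commutes with filtered colimits the spectral sequences \eqref{SSSS} assemble into a single spectral sequence converging to $H_*(A_{D,\infty}(m,\XS;g),\Z)$ whose $E^1$-page in columns $1\le k\le\db$ is $\tilde H_c^{2rk-s}(C_k,\pm\Z)$, with the $k=\db+1$ column contributing only in total degree $s-k$ at least $(2\rmin-m)\db-1-(\db+1)=(2\rmin-m-1)\db-2=D(d_1,\dots,d_r;m)$ — but here, crucially, $\db\to\infty$ along the colimit, so that bound goes to $+\infty$ and the truncation column disappears entirely in the limit. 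Hence in the colimit the spectral sequence is, in every fixed total degree, eventually built only out of the $C_k$-columns, which are exactly the same columns that appear in the Vassiliev-type spectral sequence computing $H_*(\Omega^m_0\XS)$ (this is where the identification of the stable $E^1$-page with that of the mapping space enters; it is the content — in the stabilized, tuple-of-polynomials setting — of the comparison underlying Theorem~\ref{thm: AKY1}). I would therefore invoke Theorem~\ref{thm: AKY1} (valid since $2\le m\le 2(\rmin-1)$ forces $2\rmin-m-1\ge1$, so $D(d_1,\dots,d_r;m)\to\infty$ with $\db$) to conclude that $s_{D,\infty}$ induces an isomorphism on $H_j$ for every $j$, i.e.\ is a homology equivalence.

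More carefully, rather than re-deriving the mapping-space side from scratch I would set it up as a direct limit of the statements of Theorem~\ref{thm: AKY1}: that theorem (once proved in \S\ref{section: Vassiliev}) asserts $i_D^{\p}\colon A_D(m,\XS;g)\to\Omega^m\XS$ is a homology equivalence through dimension $D(d_1,\dots,d_r;m)=(2\rmin-m-1)\db-2$. Under the stabilization maps $s_D$ the diagram \eqref{diagram_sd} commutes, so the maps $i_{D+k\textbf{\textit{a}}}^{\p}$ are compatible and pass to the colimit to give $s_{D,\infty}\colon A_{D,\infty}(m,\XS;g)\to\Omega^m_0\XS$. For any fixed $j$, once $k$ is large enough that $(2\rmin-m-1)(\db+\min_l a_l\cdot k)-2\ge j$ (possible precisely because $2\rmin-m-1\ge1$), the map $(i_{D+k\textbf{\textit{a}}}^{\p})_*$ is an isomorphism on $H_j$; taking the colimit over $k$ and using that $H_j$ commutes with filtered colimits shows $(s_{D,\infty})_*$ is an isomorphism on $H_j$. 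Since $j$ was arbitrary, $s_{D,\infty}$ is a homology equivalence.

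The main obstacle is making the comparison of the truncated spectral sequences genuinely stable: one must check that the morphism of spectral sequences induced by $s_D$ is an isomorphism on $E^1$ in the columns $1\le k\le\db$ and that the potentially troublesome $k=\db+1$ truncation column is pushed out of any fixed total degree as $\db$ grows — in other words that the stabilization dimension $(2\rmin-m-1)\db-2$ really does tend to infinity, which is exactly where the hypothesis $m\le 2(\rmin-1)$ is used. Everything else is bookkeeping with Lemmas~\ref{lemma: E1}, \ref{lemma: E1*} and the already-established Theorem~\ref{thm: AKY1}; the real content is that these two facts together let the colimit wash out the truncation error.
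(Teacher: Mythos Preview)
Your overall strategy in the third paragraph --- show that $i_{D}^{\p}$ is a homology equivalence through a dimension tending to infinity, then pass to the filtered colimit using diagram \eqref{diagram_sd} --- is exactly the paper's approach. But you have misquoted Theorem~\ref{thm: AKY1}, and the resulting gap is precisely the substance of the paper's proof.

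Theorem~\ref{thm: AKY1} is \emph{not} a statement about $i_D^{\p}:A_D(m,\XS;g)\to\Omega^m\XS$; it says that $\iota_D:A_D(m,\XS;g)\to \Omega^m U({\cal K}_\Sigma)=\Omega^m(\C^r\setminus\ZS)$ is a homology equivalence through dimension $D_*(d_1,\dots,d_r;m)$ (the weaker bound \eqref{D*number}, not $D(d_1,\dots,d_r;m)$). The bound discrepancy is harmless for the colimit, but the change of target is not. The map $s_{D,\infty}$ lands in $F(\RP^m,\XS;g)\simeq\Omega^m\XS$, so you must pass from a statement about $\iota_D$ to one about $i_D^{\p}$. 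In diagram \eqref{diagram: key} these differ by post-composition with $\gamma_m^{\#}\circ i^{\p}:F(\RP^m,\XS;g)\to\Omega^m\XS$, and there is no a priori reason this map induces an isomorphism on homology (it is induced by the double cover $\gamma_m:S^m\to\RP^m$). The paper's argument is: since $\Omega^m p_\Sigma$ is a homotopy equivalence for $m\ge 2$ (from the fibration \eqref{fiber sequence} with torus fibre), the composite $\gamma_m^{\#}\circ i^{\p}\circ i_D^{\p}=\Omega^m p_\Sigma\circ\iota_D$ is a homology equivalence through $D_*$; hence $\gamma_m^{\#}\circ i^{\p}$ is an epimorphism on $H_k(\,\cdot\,,\Bbb F)$ in that range, and since source and target are both homotopy equivalent to $\Omega^m\XS$ with finite-dimensional $\Bbb F$-homology in each degree, the epimorphism is forced to be an isomorphism. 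Only then can one cancel and conclude that $i_D^{\p}$ itself is a homology equivalence through $D_*$. Without this step your colimit argument does not reach $\Omega^m\XS$.

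Your first two paragraphs sketch a different route --- comparing the truncated spectral sequences for $\Sigma_D$ and $\Sigma_{D+\textbf{\textit{a}}}$ and then identifying the colimit $E^1$-page with the Vassiliev $E^1$-page for $\Omega^m\XS$. The first half of that is the content of Theorem~\ref{thm: sd}, not of Theorem~\ref{thm: AKY1-stable}; and the second half (matching the stable $E^1$ with the mapping-space $E^1$) runs into the same obstacle: the Vassiliev spectral sequence you would compare with converges to $H_*(\Omega^m U({\cal K}_\Sigma))$, not $H_*(\Omega^m\XS)$, and you again need the $\Omega^m p_\Sigma$ and finite-dimensionality argument to bridge the two.
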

We postpone the proof of Theorem \ref{thm: AKY1-stable}
 to \S \ref{section: main result}.  We first prove another key  result (Theorem \ref{thm: sd}).
Recall the 
definition of the map
$s_D$, and
consider the map 
$\tilde{s}_D:\Sigma_D\to \Sigma_{D+\textbf{\textit{a}}}$ given by the multiplication,
$\tilde{s}_D(f_1,\cdots ,f_r)=(\tilde{g}^{a_1}f_1,\cdots ,\tilde{g}^{a_r}f_r)$.
One can show that
it
extends to the embedding 
$\tilde{s}_d:\R^{2(N_{D+\textbf{\textit{a}}}-N_D)}\times \Sigma_D\to
\Sigma_{D+\textbf{\textit{a}}}$
 and that
it induces
the filtration preserving open embedding
$\hat{s}_D:\R^{2(N_{D+\textbf{\textit{a}}}-N_D)}\times\SZ\to \SZd$
by using \cite[Prop. 7]{MV} or 
\cite[page 103-106]{Va}. 
Hence, it also induces the filtration preserving open embedding
$\hat{s}_D:\R^{2(N_{D+\textbf{\textit{a}}}-N_D)}\times\XD  \to Y^{\Delta}$. 
Since one-point compactification is contravariant for open embeddings, it induces a homomorphism 
of spectral sequences
\begin{equation}
\{\theta^t_{k,s}:E^t_{k,s}\to \ ^{\p}E^t_{k,s}\}.
\end{equation}

\begin{lmm}\label{lemma: Thom}
If $1\leq k\leq \db$,
$\theta^1_{k,s}:E^1_{k,s}\to \ ^{\p}E^1_{k,s}$
is an isomorphism for any $s$.
\end{lmm}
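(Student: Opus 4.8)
\textbf{Proof proposal for Lemma \ref{lemma: Thom}.}

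The plan is to identify both $E^1_{k,s}$ and $\,^{\p}E^1_{k,s}$ (for $1\le k\le\db$) with twisted compactly-supported cohomology of the \emph{same} configuration-type space $C_k$, and to check that the homomorphism $\theta^1_{k,s}$ induced by the open embedding $\hat s_D$ realizes this identification. By Lemma \ref{lemma: E1}(i) and Lemma \ref{lemma: E1*}(i) we already have natural isomorphisms $E^1_{k,s}\cong\tilde H_c^{2rk-s}(C_k,\pm\Z)\cong\,^{\p}E^1_{k,s}$ for $1\le k\le\db$, so it suffices to show that $\theta^1_{k,s}$ is compatible with these two isomorphisms, i.e. that the square relating $\theta^1_{k,s}$ to the identity map on $\tilde H_c^{2rk-s}(C_k,\pm\Z)$ commutes. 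Since $k\le\db$ in this range we have $X^{\Delta}_k\setminus X^{\Delta}_{k-1}=\SZ_k\setminus\SZ_{k-1}$ and $Y^{\Delta}_k\setminus Y^{\Delta}_{k-1}=\SZd_k\setminus\SZd_{k-1}$, so the truncation plays no role here and we are comparing strata of the two \emph{non-degenerate} resolutions.

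First I would recall from the proof of Lemma \ref{lemma: vector bundle*} that $\SZ_k\setminus\SZ_{k-1}$ is the total space of an affine bundle $\xi_{D,k}$ over $C_k$, with the projection $\pi_k$ sending $((f_1,\dots,f_r),u)$ to the configuration $\{(x_j,F(\textbf{\textit{x}}_j))\}_{j=1}^k\in C_k$; likewise $\SZd_k\setminus\SZd_{k-1}$ is the total space of $\xi_{D+\textbf{\textit a},k}$ over the same $C_k$. The key observation is that multiplication by $\tilde g$ does not vanish on $\R^{m+1}\setminus\{\mathbf0\}$ (indeed $\tilde g=\sum z_k^2>0$ there), so for $F\in\Sigma_D$ and each point $x_j$ we have $(\tilde g^{a_1}f_1,\dots,\tilde g^{a_r}f_r)(\textbf{\textit{x}}_j)=(\tilde g(\textbf{\textit{x}}_j)^{a_1},\dots,\tilde g(\textbf{\textit{x}}_j)^{a_r})\cdot F(\textbf{\textit{x}}_j)$, which lies in $\ZS$ if and only if $F(\textbf{\textit{x}}_j)$ does, because $\GS$-multiplication (here by the positive reals $\tilde g(\textbf{\textit{x}}_j)^{a_t}$) preserves $\C^r\setminus\ZS$ — and in fact $\ZS$ is a union of coordinate subspaces $V(z_{i_1},\dots,z_{i_s})$, which are visibly invariant under coordinatewise scaling. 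Hence $\tilde s_D$ carries the roots $x_1,\dots,x_k$ of an element of $\Sigma_D$ to the \emph{same} points $x_1,\dots,x_k$, only changing the values $s_j=F(\textbf{\textit{x}}_j)$ to $\tilde g(\textbf{\textit{x}}_j)^{a}\cdot s_j$. Composing with the deformation retraction of $\C^r\setminus\ZS$ that rescales (or simply noting that the self-map of $C_k$ induced by $s\mapsto\lambda\cdot s$ with $\lambda>0$ is isotopic to the identity), $\hat s_D$ covers a map $C_k\to C_k$ homotopic to $\mathrm{id}_{C_k}$, and it is a bundle map $\xi_{D,k}\oplus\underline{\R}^{2(N_{D+\textbf{\textit a}}-N_D)}\hookrightarrow\xi_{D+\textbf{\textit a},k}$ which is a fibrewise open (in fact affine-linear) embedding onto a subbundle summand.

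Given this, the argument is pure naturality of the Thom isomorphism. Passing to one-point compactifications, $\hat s_D$ induces, on the $k$-th stratum, the map of Thom spaces $T(\xi_{D+\textbf{\textit a},k})\to T(\xi_{D,k}\oplus\underline{\R}^{2(N_{D+\textbf{\textit a}}-N_D)})$ given by collapsing the complementary directions; under the Thom isomorphisms (with the $\pm\Z$ coefficients coming from the sign representation of $S_k$, exactly as in \cite[pp.~37--38, 114, 254]{Va}) this becomes the map on $\tilde H_c^{2rk-s}(C_k,\pm\Z)$ induced by the self-map of $C_k$ homotopic to the identity, hence the identity. Therefore $\theta^1_{k,s}$, which by construction is the map induced on $E^1$-terms by $\hat s_D$ after reindexing via Alexander duality, is identified with $\mathrm{id}$ on $\tilde H_c^{2rk-s}(C_k,\pm\Z)$, and is in particular an isomorphism for all $s$.

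The main obstacle I anticipate is the bookkeeping in the last paragraph: making precise that the homomorphism of spectral sequences induced by an \emph{open} (not proper) embedding, through the contravariant one-point-compactification functor, together with the two Alexander-duality reindexings and the Thom isomorphisms on both sides, really does collapse to $\mathrm{id}$ on $C_k$-cohomology rather than to something twisted by an unexpected orientation or shift. The degree count $(2N_D+k-s-1)-l_{D,k}=2rk-s$ from Lemma \ref{lemma: E11} must be re-run with $D$ replaced by $D+\textbf{\textit a}$ and checked to give the \emph{same} answer $2rk-s$ (it does, since $l_{D+\textbf{\textit a},k}-l_{D,k}=2(N_{D+\textbf{\textit a}}-N_D)$ is exactly the rank of the added trivial factor), so no shift appears; and the orientation/sign system on the Thom space is determined by the $S_k$-action on the simplex fibre, which is literally unchanged by $\hat s_D$ because $\hat s_D$ is a fibrewise affine isomorphism onto its image. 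Once these identifications are spelled out, the isomorphism statement is immediate.
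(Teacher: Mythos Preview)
Your proposal is correct and follows essentially the same approach as the paper: both arguments identify the $k$-th strata (for $1\le k\le\db$) with affine bundles over the common base $C_k$, observe that $\hat s_D$ covers a self-map of $C_k$ homotopic to the identity, and conclude via naturality of the Thom isomorphism. Your treatment is in fact more explicit than the paper's---you spell out why the induced base map is only homotopic to the identity (the $\tilde g(\textbf{\textit{x}}_j)^{a_t}$ rescaling of the $s_j$-coordinates) and verify the degree bookkeeping, whereas the paper simply draws the homotopy-commutative square and invokes Lemmas \ref{lemma: E1} and \ref{lemma: E1*}.
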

\begin{proof}
Suppose that $1\leq r\leq \db$.
Then
it follows from the proof of Lemma \ref{lemma: vector bundle*}
that there is a homotopy commutative diagram of open affine bundles
$$
\begin{CD}
\SZ_k\setminus \SZ_{k-1} @>\pi_k>> 
C_k
\\
@V{\hat{s}_D}VV  \Vert @.
\\
\SZd_k\setminus \SZd_{k-1} @>{\pi_k}>> C_k
\end{CD}
$$
Since $X^{\Delta}_k\setminus X^{\Delta}_{k-1}=
\SZ_k\setminus \SZ_{k-1}$ and
$Y^{\Delta}_k\setminus Y^{\Delta}_{k-1}=
\SZd_k\setminus \SZd_{k-1}$,
by  Lemma \ref{lemma: E1} and Lemma \ref{lemma: E1*},
we have a commutative diagram
\begin{equation*}\label{Thom}
\begin{CD}
E^1_{k,s}
@>T>\cong> \tilde{H}_c^{2rk-s}(C_k,\pm \Z)
\\
@V{\theta}^1_{k,s}VV  \Vert @.
\\
^{\p}E^1_{k,s}
@>T>\cong> \tilde{H}_c^{2rk-s}(C_k,\pm \Z)
\end{CD}
\end{equation*}
where $T$ denotes the Thom isomorphism.
Hence, 
$\theta^1_{k,s}$ is an isomorphism.
\end{proof}

\begin{thm}\label{thm: sd}
$s_D:A_D(m,\XS;g) \to A_{D+\textbf{\textit{a}}}(m,\XS;g)$ is a homology equivalence through
dimension $D(d_1,\cdots ,d_r;m)$.
\end{thm}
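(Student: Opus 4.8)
The plan is to deduce Theorem \ref{thm: sd} from the comparison of the two truncated spectral sequences \eqref{SSS} and \eqref{SSSS} via the homomorphism $\{\theta^t_{k,s}\}$ induced by the filtration-preserving open embedding $\hat s_D$. First I would observe that by Lemma \ref{lemma: Thom} the map $\theta^1_{k,s}\colon E^1_{k,s}\to {}^{\p}E^1_{k,s}$ is an isomorphism in the range $1\le k\le\db$, and by Lemma \ref{lemma: E1}(ii) and Lemma \ref{lemma: E1*}(ii) both $E^1_{k,s}$ and ${}^{\p}E^1_{k,s}$ vanish unless $0\le k\le\db+1$. So the only column where $\theta^1$ is not yet known to be an isomorphism is $k=\db+1$ (the column $k=0$ contributes only in degree $s-k=-1$, hence is irrelevant for positive-degree homology). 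Thus $\theta^1_{k,s}$, and therefore $\theta^{\infty}_{k,s}$, is an isomorphism for all $k\le\db$, and it remains to control the contribution of the single column $k=\db+1$.

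Next I would run the standard ``comparison of spectral sequences plus a vanishing line'' argument. Using Lemma \ref{lemma: E1}(iii)--(iv) (and the identical statements in Lemma \ref{lemma: E1*}) the $E^1$-terms vanish below the line $s\le(2r_{\rm min}-m)k-1$ for $1\le k\le\db$ and below $s\le(2r_{\rm min}-m)\db-1$ for $k=\db+1$. Translating into the total degree $n=s-k$ of the abutment $H_n(A_D(m,\XS;g),\Z)$, a nonzero entry in column $k$ ($1\le k\le\db$) requires $n\ge(2r_{\rm min}-m-1)k$, which is smallest at $k=1$, giving $n\ge 2r_{\rm min}-m-1$; and a nonzero entry in the truncation column $k=\db+1$ requires $n=s-k\ge(2r_{\rm min}-m)\db-1-(\db+1)=(2r_{\rm min}-m-1)\db-2=D(d_1,\dots,d_r;m)$. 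Hence for $n\le D(d_1,\dots,d_r;m)-1$ the column $k=\db+1$ contributes nothing, so the abutment in those degrees is assembled entirely from columns $k\le\db$ on which $\theta$ is an isomorphism; for $n=D(d_1,\dots,d_r;m)$ one still gets surjectivity of $(s_D)_*$ because the only possibly-extra class lives in the top filtration column $k=\db+1$, which receives no differentials that could kill it, so a diagram chase through the filtration on $H_*$ gives surjectivity in degree $D(d_1,\dots,d_r;m)$ and isomorphism below it. One must also check the Alexander-duality range: $N_{D+\textbf{\textit{a}}}$ and $N_D$ grow like $d_k^m$, so $2N_D-2\gg D(d_1,\dots,d_r;m)$ and the identifications \eqref{SSS}, \eqref{SSSS} are valid throughout the relevant range — this is routine and I would state it in one line.

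I expect the main obstacle to be the bookkeeping around the truncation column $k=\db+1$: unlike columns $k\le\db$, there is no Thom-space description of $X^\Delta_{\db+1}\setminus X^\Delta_{\db}$ and no a priori isomorphism $\theta^1_{\db+1,s}$, so the whole point is to arrange the numerics of the truncation (the ``$\dim(\XD_{\db+1}\setminus\XD_{\db})=\dim(\SZ_{\db}\setminus\SZ_{\db-1})+1$'' from \cite[Lemma 2.6]{KY4}) so that this column is pushed far enough up in total degree that it cannot obstruct the comparison below dimension $D(d_1,\dots,d_r;m)$. Once the estimate $E^1_{\db+1,s}=0$ for $s\le(2r_{\rm min}-m)\db-1$ is in hand (Lemma \ref{lemma: E1}(iv) for both $D$ and $D+\textbf{\textit{a}}$), the rest is the classical Zeeman-style comparison theorem: $\theta^r_{k,s}$ an isomorphism on $E^1$ for $k\le\db$ and source and target both concentrated in columns $0\le k\le\db+1$ forces $(s_D)_*\colon H_n(A_D(m,\XS;g))\to H_n(A_{D+\textbf{\textit{a}}}(m,\XS;g))$ to be an isomorphism for $n<D(d_1,\dots,d_r;m)$ and an epimorphism for $n=D(d_1,\dots,d_r;m)$, which is exactly the assertion that $s_D$ is a homology equivalence through dimension $D(d_1,\dots,d_r;m)$.
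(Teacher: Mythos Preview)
Your overall strategy matches the paper's: compare the two truncated spectral sequences via $\{\theta^t_{k,s}\}$, use Lemma \ref{lemma: Thom} for $k\le\db$, and control column $k=\db+1$ via the vanishing in Lemmas \ref{lemma: E1}(iv) and \ref{lemma: E1*}(iv). But there are two genuine problems in the execution.

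First, the sentence ``$\theta^1_{k,s}$, and therefore $\theta^\infty_{k,s}$, is an isomorphism for all $k\le\db$'' is not justified. The differentials $d^t\colon E^t_{k,s}\to E^t_{k+t,s+t-1}$ from a column $k\le\db$ can land in column $\db+1$, where $\theta^1$ is \emph{not} known to be an isomorphism above the vanishing range; this spoils $\theta^{t+1}$ at the source, and the spoiling propagates backward through the columns as $t$ grows. The paper handles exactly this by tracking the ``poisoned'' region: one sets ${\cal S}_1=\{(\db+1,s):s\ge(2r_{\rm min}-m)\db\}$ and inductively pulls it back along all possible differentials, obtaining sets $A_t$ whose minimum total degree $s-k$ is $D_0+t+1$; hence for $0\le k\le\db$ one gets $\theta^\infty_{k,s}$ an isomorphism whenever $s-k\le D_0+1$. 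Your appeal to a ``Zeeman-style comparison theorem'' does not replace this chase: the classical Zeeman theorem concerns fibre/base isomorphisms in a Serre-type situation and does not apply here.

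Second, your arithmetic in column $\db+1$ is off by one. Lemma \ref{lemma: E1}(iv) says $E^1_{\db+1,s}=0$ for $s\le(2r_{\rm min}-m)\db-1$, so a nonzero entry needs $s\ge(2r_{\rm min}-m)\db$, hence
\[
n=s-(\db+1)\ \ge\ (2r_{\rm min}-m-1)\db-1\ =\ D(d_1,\dots,d_r;m)+1,
\]
not $D(d_1,\dots,d_r;m)$. With the correct bound, both spectral sequences have column $\db+1$ vanishing for $n\le D_0$, and (combined with the propagation argument above) one gets $\theta^\infty_{k,s}$ an isomorphism for all $s-k\le D_0$, hence $(s_D)_*$ an isomorphism on $H_n$ for all $n\le D_0$. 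Your version yields only an isomorphism for $n<D_0$ and an epimorphism at $n=D_0$, which is strictly weaker than what the theorem asserts (the paper defines ``homology equivalence through dimension $N$'' as an isomorphism for all $k\le N$). The separate surjectivity argument you sketch at $n=D_0$ is both unnecessary once the arithmetic is fixed and not correct as stated: the uncontrolled piece sitting at the top of the filtration in \emph{both} spectral sequences does not by itself force surjectivity of the map on abutments.
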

\begin{proof}
We set
$D_0=D(d_1,\cdots ,d_r;m)=(2\rmin -m-1)\db -2$, and
consider 
two spectral sequences
$$
\begin{cases}
\{E^{t}_{k,s},d^t:E^t_{k,s}\to \ E^t_{k+t,s+t-1}\}\ & 
\Rightarrow \quad H_{s-k}(A_D(m,\XS;g),\Z),
\\
\{\ ^{\p}E^{t}_{k,s},\ ^{\p}d^t:\ ^{\p}E^t_{k,s}
\to \ ^{\p}E^t_{k+t,s+t-1}\}  & 
\Rightarrow \quad H_{s-k}(A_{D+\textbf{\textit{a}}}(m,\XS;g),\Z),
\end{cases}
$$
with a homomorphism 
$\{\theta^t_{k,s}:E^t_{k,s}\to \ ^{\p}E^t_{k,s}\}$
of spectral sequences.
\par
Next, 
we shall try to estimate the maximal positive integer $D_{\rm max}$ such that
$$
D_{\rm max}=\max\{N\in\Z:\theta^{\infty}_{k,s}
\mbox{ is  an isomorphism for all $(k,s)$ if }s-k\leq N\}.
$$
By Lemma \ref{lemma: E1} and  \ref{lemma: E1*}, we see that
$E^1_{k,s}=\ ^{\p}E^1_{k,s}=0$ if
$k<0$, or if $k>\db +1$, or if $k=\db +1$ with $s\leq (2\rmin -m)\db -1$.
Since $(2\rmin -m)\db -(\db +1)=D_0+1$,  we deduce that:
\begin{enumerate}
\item[$(*)_1$]
if $k<0$ or $k\geq \db +1$,
$\theta^{\infty}_{k,s}$ is an isomorphism for all $(k,s)$ if
$s-k\leq D_0$.
\end{enumerate}
Next, we assume that $0\leq k\leq \db$, and again investigate the condition for
$\theta^{\infty}_{k,s}$  to be an isomorphism.
Note that the group $E^1_{k_1,s_1}$ is not known for
$(k_1,s_1)\in{\cal S}_1=\{(\db +1,s)\in\Z^2:s\geq (2\rmin -m)\db \}$.
By considering the differentials
$d^1:E^1_{k,s}\to E^{1}_{k+1,s}$
and
$\ ^{\p}d^1:\ ^{\p}E^1_{k,s}\to \ ^{\p}E^{1}_{k+1,s}$
and applying Lemma \ref{lemma: Thom}, we see that
$\theta^2_{k,s}$ is an isomorphism if
$(k,s)\notin {\cal S}_1\cup {\cal S}_2$, where
$$
{\cal S}_2=:
\{(k_1,s_1)\in\Z^2:(k_1+1,s_1)\in {\cal S}_1\}
=\{(\db ,s_1)\in \Z^2:s_1\geq (2\rmin -m)\db \}.
$$
A similar argument for the differentials $d^2$ and $^{\p}d^2$ shows that
$\theta^3_{k,s}$ is an isomorphism if
$(k,s)\notin \bigcup_{u=1}^3{\cal S}_u$, where
${\cal S}_3=\{(k_1,s_1)\in\Z^2:(k_1+2,s_1+1)\in {\cal S}_1\cup
{\cal S}_2\}.$
Continuing in the same fashion,
considering the differentials
$d^t:E^t_{k,s}\to E^{t}_{k+t,s+t-1}$
and
$\ ^{\p}d^t:\ ^{\p}E^t_{k,s}\to \ ^{\p}E^{t}_{k+t,s+t-1},$
and applying 
Lemma \ref{lemma: Thom}, we  easily see that $\theta^{\infty}_{k,s}$ is an isomorphism
if $\dis (k,s)\notin {\cal S}:=\bigcup_{t\geq 1}{\cal S}_t
=\bigcup_{t\geq 1}A_t$,
where  $A_t$ denotes the set given by
$$
A_t:=
\left\{
\begin{array}{c|l}
 &\mbox{ There are positive integers }l_1,l_2,\cdots ,l_t
\mbox{ such that},
\\
(k_1,s_1)\in \Z^2 &\  1\leq l_1<l_2<\cdots <l_t,\ 
k_1+\sum_{j=1}^tl_j=\db +1,
\\
& \ s_1+\sum_{j=1}^t(l_j-1)\geq (2\rmin -m)\db
\end{array}
\right\}.
$$
If $\dis A_t\not= \emptyset$, it is easy to see that
\begin{eqnarray*}
a(t)&=&\min \{s-k:(k,s)\in A_t\}=
(2\rmin -m)\db -(\db +1)+t
=D_0+t+1.
\end{eqnarray*}
Hence, 
$\min \{a(t):t\geq 1,A_t\not=\emptyset\}=D_0+2,$
and we have the following:
\begin{enumerate}
\item[$(*)_2$]
If $0\leq k\leq \db $,
$\theta^{\infty}_{k,s}$ is  an isomorphism for any $(k,s)$ if
$s-k\leq  D_0+1.$
\end{enumerate}
Then, by $(*)_1$ and $(*)_2$, we see that
$\theta^{\infty}_{k,s}:E^{\infty}_{k,s}\stackrel{\cong}{\rightarrow}
\ ^{\p}E^{\infty}_{k,s}$ is an isomorphism for any $(k,s)$
if $s-k\leq D_0$.
Hence, we can now see that
$s_D$ is a homology equivalence through dimension
$D_0$.
\end{proof}
\section{The complement $\C^r\setminus \ZS$.}\label{section: polyhedral product}
In this section, we recall the basic results on polyhedral products and
investigate the connectivity of the complement $\C^r\setminus \ZS$.
 
\begin{dfn}
{\rm
Let $[r]=\{1,2,\cdots ,r\}$ be a set of indices and let
$K$ be a simplicial  complex on the vertex set $[r]$.
\par
(i)
For each $\sigma =\{i_1,\cdots ,i_k\}\subset [r]$, let
$L_{\sigma}\subset \C^r$ denote {\it the coordinate subspace} in $\C^r$ given by
$L_{\sigma}=\{\textbf{\textit{x}}=(x_1,\cdots ,x_r)\in \C^r:x_{i_1}=\cdots =x_{i_k}=0\}$,
and $U(K)$ the complement  of the {\it  arrangement of coordinate subspaces} in $\C^r$

\begin{equation}\label{arrangement}
U(K)=\C^r\setminus \bigcup_{\sigma\notin K}L_{\sigma}.
\end{equation}
\par (ii)
Let $(\underline{X},\underline{A})$ be a collection of spaces of pairs
$\{(X_k,A_k)\}_{k=1}^r$.
The {\it  polyhedral product}
${\cal Z}_K(\underline{X},\underline{A})$ of the collection $(\underline{X},\underline{A})$ with respect to $K$
by
\begin{eqnarray}\label{polyhedral product}
{\cal Z}_K(\underline{X},\underline{A})&=&
\bigcup_{\sigma\in K}(\underline{X},\underline{A})^{\sigma},
\end{eqnarray}
where we set
$(\underline{X},\underline{A})^{\sigma}=
\{(x_1,\cdots ,x_r)\in X_1\times \cdots \times X_r\ \vert \ 
x_j\in A_j\mbox{ if }j\notin \sigma\}$ for $\sigma \in K$.
\par
In particular,
when $(X_j,A_j)=(X,A)$ for each $1\leq j\leq r$, we set
${\cal Z}_K(\underline{X},\underline{A})={\cal Z}_K(X,A)$.
For $(X,A)=(D^2,S^1)$, ${\cal Z}_K={\cal Z}_K(D^2,S^1)$  is called the
{\it moment-angle complex} of type $K$, while
$DJ(K)={\cal Z}_K(\CP^{\infty},*)$  is called the {\it Davis-Januszkiwicz space} of type $K$.
\par
(iii)
Let ${\cal K}_{\Sigma}$ denote the simplicial complex on the vertex set $[r]$
defined by
\begin{equation}\label{KSigma}
{\cal K}_{\Sigma}=\big\{\{i_1,\cdots ,i_k\}\subset [r]\ \vert \ 
{\bf n}_{i_1},\cdots ,{\bf n}_{i_k}\mbox{ span a cone}\in \Sigma\big\},
\end{equation}
and let $q_{\Sigma}$ denote the positive integer given by}
\begin{equation}\label{equ: qSigma}
q_{\Sigma}=\max \{s\in\Z_{\geq 1}\ \vert \ 
\mbox{Any $s$ vectors }{\bf n}_{i_1},{\bf n}_{i_2},\cdots ,{\bf n}_{i_s}
\mbox{ span a cone in }\Sigma\}.
\end{equation}
\end{dfn}
\begin{lmm}\label{lmm: KSigma}
$U({\cal K}_{\Sigma})=\C^r\setminus \ZS $, and
$\rmin =q_{\Sigma}+1$.
\end{lmm}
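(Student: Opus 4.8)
The statement has two parts, and I would prove them as follows.

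\emph{First part: $U(\mathcal{K}_\Sigma) = \C^r \setminus \ZS$.} The plan is to unwind both sides to a common description in terms of which coordinate subspaces they exclude. By definition (\ref{arrangement}), $U(\mathcal{K}_\Sigma) = \C^r \setminus \bigcup_{\sigma \notin \mathcal{K}_\Sigma} L_\sigma$, and by (\ref{pri}), $\ZS = \bigcup V(z_{i_1}, \dots, z_{i_s})$ where the union runs over primitive collections $\{\n_{i_1}, \dots, \n_{i_s}\}$. Note that $V(z_{i_1}, \dots, z_{i_s}) = L_{\{i_1,\dots,i_s\}}$, so $\ZS$ is the union of the coordinate subspaces $L_\tau$ for $\tau$ a primitive collection. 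Meanwhile $\bigcup_{\sigma \notin \mathcal{K}_\Sigma} L_\sigma$ is the union of $L_\sigma$ over all $\sigma$ that are \emph{not} faces of $\mathcal{K}_\Sigma$, i.e. over all $\sigma$ such that $\{\n_i : i \in \sigma\}$ do not span a cone of $\Sigma$. The key observation is that $L_\sigma \subseteq L_\tau$ whenever $\tau \subseteq \sigma$, so the union over all non-faces $\sigma$ equals the union over the \emph{minimal} non-faces. I would then verify that $\sigma$ is a minimal non-face of $\mathcal{K}_\Sigma$ precisely when $\{\n_i : i \in \sigma\}$ is a primitive collection in the sense defined before (\ref{pri}): the primitivity condition says $\{\n_{i_1}, \dots, \n_{i_s}\}$ lie in no cone of $\Sigma$ (so $\sigma \notin \mathcal{K}_\Sigma$) but every proper subset does (so every proper subset is a face of $\mathcal{K}_\Sigma$, i.e. $\sigma$ is minimal among non-faces). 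This establishes that the two excluded unions coincide, hence $U(\mathcal{K}_\Sigma) = \C^r \setminus \ZS$.

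\emph{Second part: $\rmin = q_\Sigma + 1$.} By (\ref{dim rmin}), $\rmin$ is the minimal cardinality of a primitive collection, which by the identification above is the minimal cardinality of a minimal non-face of $\mathcal{K}_\Sigma$. On the other hand, $q_\Sigma$ as defined in (\ref{equ: qSigma}) is the largest $s$ such that every $s$-element subset $\{\n_{i_1}, \dots, \n_{i_s}\}$ spans a cone of $\Sigma$ — equivalently, every $s$-element subset of $[r]$ is a face of $\mathcal{K}_\Sigma$, i.e. $\mathcal{K}_\Sigma$ contains the full $(s-1)$-skeleton of the simplex on $[r]$. The plan is to argue both inequalities. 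If every $q_\Sigma$-element subset is a face but some $(q_\Sigma+1)$-element subset is not, take such a non-face $\sigma$ of size $q_\Sigma + 1$; all its proper subsets have size $\le q_\Sigma$ hence are faces, so $\sigma$ is a minimal non-face of size $q_\Sigma + 1$, giving $\rmin \le q_\Sigma + 1$. Conversely, any minimal non-face has all proper subsets being faces; in particular no $s$-subset with $s \le \rmin - 1$ can be a non-face (else it would contain, or be, a smaller minimal non-face, contradicting minimality of $\rmin$), so every subset of size $\rmin - 1$ is a face, whence $q_\Sigma \ge \rmin - 1$, i.e. $\rmin \le q_\Sigma + 1$ again — and combined with $q_\Sigma \ge \rmin - 1$ this forces equality. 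I should double-check the edge direction carefully: that $q_\Sigma$ is well-defined (at least singletons span rays, which are cones, so $q_\Sigma \ge 1$) and that the maximality in (\ref{equ: qSigma}) is exactly the failure threshold.

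\emph{Main obstacle.} The conceptual content is entirely the translation between the combinatorics of primitive collections and the Stanley--Reisner combinatorics of the simplicial complex $\mathcal{K}_\Sigma$; once that dictionary is set up correctly, both parts are short. The step requiring the most care is matching the definition of ``primitive collection'' with ``minimal non-face'' — in particular making sure the hypothesis that $\Sigma$ is a fan (so that faces of cones are cones, and $\mathcal{K}_\Sigma$ is genuinely a simplicial complex) is what makes the notion of minimal non-face behave well, and that the assumption (\ref{H(k)}.1) that the $\n_k$ span $\R^n$ is not secretly needed here (it is not, for this particular lemma). I do not anticipate any genuinely hard analytic or geometric difficulty; the risk is purely in bookkeeping the quantifiers.
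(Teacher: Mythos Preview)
Your approach is the same as the paper's, which simply says the first part is clear from (\ref{eq: ZS}), (\ref{zs}), (\ref{arrangement}), (\ref{KSigma}) and the second is immediate from (\ref{dim rmin}) and (\ref{equ: qSigma}); you have merely supplied the details.

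One small logical slip in the second part: your two ``directions'' both establish the same inequality $\rmin \le q_\Sigma + 1$. The inequality $q_\Sigma \ge \rmin - 1$ is literally equivalent to $\rmin \le q_\Sigma + 1$, so combining them gives nothing new. What you still need is $q_\Sigma \le \rmin - 1$, i.e.\ that not every $\rmin$-element subset of $[r]$ is a face of $\mathcal{K}_\Sigma$. This is immediate: by definition of $\rmin$ there \emph{is} a primitive collection of size $\rmin$, and the corresponding index set is a non-face of $\mathcal{K}_\Sigma$, so $q_\Sigma < \rmin$. With that one line added, the argument is complete.
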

\begin{proof}
From  (\ref{eq: ZS}), (\ref{zs}), (\ref{arrangement}) and (\ref{KSigma}),
it is clear that $U({\cal K}_{\Sigma})=\C^r\setminus \ZS $.
Recalling the definitions (\ref{dim rmin}) and (\ref{equ: qSigma}),
we easy see that $\rmin =q_{\Sigma}+1$.
\end{proof}

\begin{lmm}[\cite{BP}]
\label{lmm: BP}
If $K$ is a simplicial complex on the vertex set $[r]$,
 there is a homotopy equivalence 
${\cal Z}_K\simeq U(K)$ and the space ${\cal Z}_K$ is a homotopy fibre
of the inclusion map 
$DJ(K)\stackrel{\subset}{\rightarrow}(\CP^{\infty})^r.$
\end{lmm}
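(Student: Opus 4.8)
The plan is to recall the standard argument of Buchstaber and Panov \cite{BP}, which proceeds in two steps: first identify $U(K)$ with a polyhedral product that deformation retracts onto ${\cal Z}_K$, and then realize ${\cal Z}_K$ as a Borel fibre.

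For the equivalence ${\cal Z}_K\simeq U(K)$, I would first observe that $U(K)$ is itself a polyhedral product, namely $U(K)={\cal Z}_K(\C,\C^*)$. The point is that $\textbf{\textit{x}}=(x_1,\dots,x_r)$ lies in $(\underline{X},\underline{A})^{\sigma}$ for $(\underline{X},\underline{A})=(\C,\C^*)$ exactly when $\{j:x_j=0\}\subseteq\sigma$, so $\bigcup_{\sigma\in K}(\C,\C^*)^{\sigma}=\{\textbf{\textit{x}}:\{j:x_j=0\}\in K\}$; since $K$ is closed under taking faces this coincides with the complement $U(K)=\C^r\setminus\bigcup_{\sigma\notin K}L_{\sigma}$ of $(\ref{arrangement})$. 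Then the radial deformation retraction of $\C$ onto the unit disc $D^2$ restricts to the deformation retraction $\C^*\to S^1$, giving a deformation retraction of pairs $(\C,\C^*)\to(D^2,S^1)$; applying it in each coordinate deformation retracts ${\cal Z}_K(\C,\C^*)$ onto ${\cal Z}_K(D^2,S^1)={\cal Z}_K$, which yields $U(K)\simeq{\cal Z}_K$.

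For the second assertion I would use the coordinatewise rotation action of the torus $(S^1)^r$ on ${\cal Z}_K={\cal Z}_K(D^2,S^1)$ and pass to Borel constructions. The computation to carry out for a single factor is that, since $S^1$ acts on $D^2$ by rotation with the origin as its unique fixed point, the equivariant scaling $(z,t)\mapsto tz$ contracts $D^2$ to $\{0\}$ equivariantly, so $D^2\times_{S^1}ES^1\simeq BS^1=\CP^{\infty}$ while $S^1\times_{S^1}ES^1=ES^1\simeq *$; thus the Borel construction sends the pair $(D^2,S^1)$ to $(\CP^{\infty},*)$. Because the Borel construction commutes (up to homotopy) with the colimit over the face poset of $K$ defining the polyhedral product, it sends ${\cal Z}_K(D^2,S^1)$ to ${\cal Z}_K(\CP^{\infty},*)=DJ(K)$, i.e.\ there is an equivalence ${\cal Z}_K\times_{(S^1)^r}E(S^1)^r\simeq DJ(K)$. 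The Borel fibration
\[
{\cal Z}_K\longrightarrow {\cal Z}_K\times_{(S^1)^r}E(S^1)^r\longrightarrow B(S^1)^r=(\CP^{\infty})^r
\]
then exhibits ${\cal Z}_K$ as the homotopy fibre, and one checks that under the equivalence above its base map corresponds to the inclusion $DJ(K)\stackrel{\subset}{\rightarrow}(\CP^{\infty})^r$, since on Borel constructions it is induced by the collapse ${\cal Z}_K\to *$.

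The main thing to be careful about is the claim that the Borel construction commutes with the polyhedral-product union --- that gluing the Borel constructions $D^2\times_{S^1}ES^1$ and $S^1\times_{S^1}ES^1$ of the factors along the common $E(S^1)^r$ reproduces ${\cal Z}_K\times_{(S^1)^r}E(S^1)^r$. This holds because ${\cal Z}_K(\underline{X},\underline{A})$ is a homotopy colimit of the diagram $\sigma\mapsto(\underline{X},\underline{A})^{\sigma}$ over the face category of $K$, the gluings being along cofibrations, and the Borel construction preserves such homotopy colimits; the full verification is carried out in \cite{BP}.
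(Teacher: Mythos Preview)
Your proposal is correct and follows the standard Buchstaber--Panov argument; the paper's own proof is simply a citation to \cite[Corollary 6.30, Theorem 8.9]{BP}, so you have supplied the details of exactly the approach the paper invokes.
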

\begin{proof}
This follows from \cite[Corollary 6.30, Theorem 8.9]{BP}.
\end{proof}
\begin{lmm}\label{lmm: connectivity}
The space $U({\cal K}_{\Sigma})=\C^r\setminus \ZS$ is $2(\rmin -1)$-connected.
\end{lmm}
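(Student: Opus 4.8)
The plan is to identify the connectivity of $U(\mathcal{K}_\Sigma)=\C^r\setminus\ZS$ with that of the moment-angle complex $\mathcal{Z}_{\mathcal{K}_\Sigma}=\mathcal{Z}_{\mathcal{K}_\Sigma}(D^2,S^1)$ via Lemma \ref{lmm: BP}, and then to exploit the combinatorial structure of $\mathcal{K}_\Sigma$ recorded by $q_\Sigma=\rmin-1$ (Lemma \ref{lmm: KSigma}). The key point is that, by the definition of $q_\Sigma$ in (\ref{equ: qSigma}) together with Lemma \ref{lmm: KSigma}, every subset of $[r]$ of cardinality $\le q_\Sigma=\rmin-1$ is a simplex of $\mathcal{K}_\Sigma$; in other words $\mathcal{K}_\Sigma$ contains the full $(\rmin-2)$-skeleton of the simplex $\Delta^{r-1}$ on the vertex set $[r]$. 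So the strategy reduces to the general fact that if a simplicial complex $K$ on $[r]$ contains the full $(\ell-1)$-skeleton of $\Delta^{r-1}$, then $\mathcal{Z}_K(D^2,S^1)$ is $2\ell$-connected.

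First I would recall the standard cellular/homotopy-theoretic description of moment-angle complexes: for $K'\subset K$ an inclusion of complexes on $[r]$ that is an isomorphism on $j$-skeleta, the pair $(\mathcal{Z}_K,\mathcal{Z}_{K'})$ is obtained by attaching cells of dimension $\ge$ (something governed by the dimension of the new faces of $K$), so that the inclusion $\mathcal{Z}_{K'}\hookrightarrow\mathcal{Z}_K$ is highly connected. Concretely, taking $K'$ to be the full $(\rmin-2)$-skeleton of $\Delta^{r-1}$, which is contained in $\mathcal{K}_\Sigma$, the inclusion $\mathcal{Z}_{K'}\hookrightarrow\mathcal{Z}_{\mathcal{K}_\Sigma}$ is an isomorphism on homotopy groups in a range determined by $\rmin$. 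It therefore suffices to compute the connectivity of $\mathcal{Z}_{K'}$ for $K'$ the $(\rmin-2)$-skeleton of $\Delta^{r-1}$. For the skeleton complex the moment-angle complex is well understood: $\mathcal{Z}_{(\Delta^{r-1})^{(\ell-1)}}$ is, up to homotopy, a wedge of spheres of dimension $2\ell+1$ (this can be extracted either from the fat-wedge filtration of Bahri–Bendersky–Cohen–Gitler, or from the fibration in Lemma \ref{lmm: BP} together with the known homotopy type of the relevant fat wedge in $(\CP^\infty)^r$), hence $2\ell$-connected. Setting $\ell=\rmin-1$ gives that $\mathcal{Z}_{K'}$ is $2(\rmin-1)$-connected, and combining with the connectivity of the inclusion $\mathcal{Z}_{K'}\hookrightarrow\mathcal{Z}_{\mathcal{K}_\Sigma}$ yields the claim.

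Alternatively, and perhaps more cleanly, I would use the homotopy fibration $\mathcal{Z}_{\mathcal{K}_\Sigma}\to DJ(\mathcal{K}_\Sigma)\to(\CP^\infty)^r$ of Lemma \ref{lmm: BP}. The Davis–Januszkiewicz space $DJ(K)=\mathcal{Z}_K(\CP^\infty,*)$ has the property that the inclusion $DJ(K)\hookrightarrow(\CP^\infty)^r$ is an isomorphism on $\pi_i$ for $i\le 2\ell+1$ whenever $K$ contains the full $(\ell-1)$-skeleton of $\Delta^{r-1}$, because then $DJ(K)$ contains the product of $2$-skeleta and more, and the relative cells of $(\CP^\infty)^r$ not already in $DJ(K)$ have dimension $\ge 2(\ell+1)$. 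From the long exact sequence of the fibration, this forces $\pi_i(\mathcal{Z}_{\mathcal{K}_\Sigma})=0$ for $i\le 2\ell+1$ — here again $\ell=\rmin-1$, but one only needs $i\le 2(\rmin-1)$, so there is even slack. Transporting through the homotopy equivalence $\mathcal{Z}_{\mathcal{K}_\Sigma}\simeq U(\mathcal{K}_\Sigma)=\C^r\setminus\ZS$ of Lemma \ref{lmm: BP} completes the proof.

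I expect the main obstacle to be pinning down precisely the connectivity of the inclusion induced by a skeletal inclusion of simplicial complexes — i.e.\ making rigorous the statement that adding faces of dimension $\ge\rmin-1$ to $\mathcal{K}_\Sigma$ (starting from the $(\rmin-2)$-skeleton) only changes $\mathcal{Z}_K$ (or $DJ(K)$) by cells of dimension $\ge 2\rmin$. This is routine cellular homotopy theory for the polyhedral product functor, but it requires care with the exact cell dimensions (each new $k$-face of $K$ contributes cells of dimension $2(k+1)$ to $DJ(K)$ and dimension $\ge k+2+ (\text{number of remaining vertices})$ type estimates to $\mathcal{Z}_K$, so one must track the worst case). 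Everything else — Lemmas \ref{lmm: KSigma}, \ref{lmm: BP}, and the identity $\rmin=q_\Sigma+1$ — is already in hand.
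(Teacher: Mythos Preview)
Your second (``alternative'') approach is essentially the paper's own proof. The paper argues exactly as you do: since every subset of $[r]$ of size $\le q_\Sigma$ lies in $\mathcal{K}_\Sigma$, the Davis--Januszkiewicz space $DJ(\mathcal{K}_\Sigma)$ contains the $2q_\Sigma$-skeleton of $(\CP^\infty)^r$; because $(\CP^\infty)^r$ has no odd-dimensional cells it in fact contains the $(2q_\Sigma+1)$-skeleton, so the pair $((\CP^\infty)^r,DJ(\mathcal{K}_\Sigma))$ is $(2q_\Sigma+1)$-connected; the fibration of Lemma~\ref{lmm: BP} then gives that $\mathcal{Z}_{\mathcal{K}_\Sigma}\simeq U(\mathcal{K}_\Sigma)$ is $2q_\Sigma=2(\rmin-1)$-connected. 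One small correction: from $(2\ell+1)$-connectivity of the pair you get $\pi_i(\mathcal{Z}_K)=0$ only for $i\le 2\ell$, not $i\le 2\ell+1$ (since $\pi_i$ of the fibre is $\pi_{i+1}$ of the pair), so there is no ``slack'' --- but this is exactly the bound required.

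Your first approach, via the moment-angle complex of the skeleton $(\Delta^{r-1})^{(\rmin-2)}$ and the BBCG-type splitting into a wedge of spheres, is correct but considerably more elaborate than necessary; it imports machinery (explicit stable splittings, or a careful cell-attachment analysis for $\mathcal{Z}_K$ rather than for $DJ(K)$) that the $DJ$ argument sidesteps entirely. The $DJ$ route is cleaner precisely because the cell structure of $DJ(K)\subset(\CP^\infty)^r$ is transparent: a face $\sigma\in K$ contributes exactly the cells of $\prod_{i\in\sigma}\CP^\infty$, so containment of the $(\ell-1)$-skeleton of $\Delta^{r-1}$ immediately translates into containment of the $2\ell$-skeleton of $(\CP^\infty)^r$.
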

\begin{proof}
Because $q_{\Sigma}=\rmin -1$, it suffices to show that
$U({\cal K}_{\Sigma})$ is $2q_{\Sigma}$-connected.
Note that the $2s$-skeleton of $(\CP^{\infty})^r$ is
$\bigcup_{i_1+i_2+\cdots +i_r=s}
\CP^{i_1}\times \cdots \times \CP^{i_r}$.
Since $1\leq q_{\Sigma}<r$, 
by (\ref{KSigma}) and (\ref{equ: qSigma}) we see that
$\bigcup_{(i_1,\cdots ,i_r)\in {\cal I}}
\CP^{i_1}\times \cdots \times \CP^{i_r}
\subset DJ({\cal K}_{\Sigma})$,
where 
${\cal I}=\{(i_1,\cdots ,i_r)\ \vert \ i_j\in \{0,\infty\},\ 
\mbox{card}(\{i_j\ \vert \  1\leq j\leq r, i_j=\infty\})=q_{\Sigma}\}.$
Hence,
$DJ({\cal K}_{\Sigma})$ contains the $2q_{\Sigma}$-skeleton 
of $(\CP^{\infty})^r$.
Since $(\CP^{\infty})^r$ has no odd dimensional cells,
$DJ({\cal K}_{\Sigma})$ contains the $(2q_{\Sigma}+1)$-skeleton of $(\CP^{\infty})^r.$
Thus
the pair $((\CP^{\infty})^r,DJ({\cal K}_{\Sigma}))$ is
$(2q_{\Sigma}+1)$-connected.
Hence, by Lemma \ref{lmm: BP},  ${\cal Z}_{{\cal K}_{\Sigma}}$ is
$2q_{\Sigma}$-connected and  so is $U({\cal K}_{\Sigma})$.
\end{proof}
\begin{rmk}\label{rmk: complete}
{\rm The assertion of Lemma  \ref{lmm: connectivity}
holds even if $\Sigma$ in not complete.}
\end{rmk}
\section{The proof of the main result.}\label{section: main result}
In this section, we 
give the proofs of Theorem \ref{thm: AKY1-stable}
and the main result (Theorem \ref{thm: I}) by using Lemma \ref{lmm: connectivity}. 
 
\begin{dfn}
{\rm
Define a map
$\iota_D:A_D(m,\XS ;g)\to \Omega^m(\C^r\setminus \ZS)$ by
\begin{equation}\label{j'}
\iota_D(f_1,\cdots ,f_r)(\textbf{\textit{x}})=
(f_1(\textbf{\textit{x}}),\cdots ,f_r(\textbf{\textit{x}}))
\quad
\mbox{for }\textbf{\textit{x}}\in S^m.
\end{equation}
}
\end{dfn}
\par\vspace{2mm}\par
Consider the natural toric morphism
$p_{\Sigma}:\C^r\setminus \ZS \to \XS$.
By \cite[(8.6)]{BP} and \cite[Prop. 6.7]{Pa1}, we see that
there is an isomorphism $\GS\cong \T^{r-n}$
and that
the group $\GS$ acts on $U({\cal K}_{\Sigma})$
freely.
Hence, we have a  fibration sequence
\begin{equation}\label{fiber sequence}
\T^{r-n}\stackrel{}{\longrightarrow} U({\cal K}_{\Sigma})=\C^r\setminus \ZS 
\stackrel{p_{\Sigma}}{\longrightarrow} \XS .
\end{equation}
Let $\gamma_m:S^m\to\RP^m$ be the double covering
and 
$
\gamma^{\#}_m:\Map^*(\RP^m,\XS)\to \Omega^m\XS
$ 
the map $\gamma^{\#}_m(f)=f\circ \gamma_m$.
Assume that $m\geq 2$, and consider the commutative diagram:
\begin{equation}\label{diagram: key}
\begin{CD}
A_D(m,\XS;g) @>i_D^{\p}>> F(\RP^m,\XS ;g) @>i^{\p}>\subset>\Map^*(\RP^m,\XS)
\\
@V{\iota_D}VV @. \Vert @.
\\
\Omega^mU({\cal K}_{\Sigma}) @>\Omega^mp_{\Sigma}>\simeq> \Omega^m\XS
@<\gamma_m^{\#}<< \Map^*(\RP^m,\XS )
\end{CD}
\end{equation}
Let $D_*(d_1,\cdots ,d_r;m)$ denote the positive integer defined by
\begin{equation}\label{D*number}
D_*(d_1,\cdots ,d_r;m)=(2r_{\rm min} -m-1)\big(\lfloor \frac{\db +1}{2}\rfloor
+1\big) -1
\end{equation}
where $\lfloor x\rfloor$ denotes the integer part of a real number $x$.
\begin{thm}\label{thm: AKY1}
If $1\leq m\leq 2(\rmin -1)$, the map
$\iota_D:A_D(m,\XS;g) \to \Omega^mU({\cal K}_{\Sigma})$
is a homology equivalence through dimension
$D_*(d_1,\cdots ,d_r;m)$.
\end{thm}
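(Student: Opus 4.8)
The plan is to deduce Theorem~\ref{thm: AKY1} from the stable result together with the stability theorem already proved, following the scheme of \cite{AKY1}. First I would use the commutative diagram (\ref{diagram: key}): since $\gamma_m^{\#}\circ i^{\p}$ and $\Omega^m p_\Sigma$ are the relevant comparison maps and $\Omega^m p_\Sigma$ is a homotopy equivalence (it is obtained by looping the fibration (\ref{fiber sequence}) whose fibre $\T^{r-n}$ is killed after taking $\Omega^m$ with $m\geq 2$; more precisely $\Omega^m U(\mathcal K_\Sigma)\xrightarrow{\simeq}\Omega^m\XS$ because $p_\Sigma$ is $(2\rmin-1)$-connected by Lemma~\ref{lmm: connectivity}, hence an iso on $\pi_k$ for $k\leq 2\rmin-2\geq m$, so its loop map is a homology equivalence in the relevant range), the map $\iota_D$ has the same homological behaviour as $i_D^{\p}$ composed with an equivalence. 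So it suffices to control $\iota_D$ directly.

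Next I would set up the stabilisation tower. The maps $s_D\colon A_D(m,\XS;g)\to A_{D+\textbf{\textit{a}}}(m,\XS;g)$ are compatible with the $\iota$'s, i.e. $\iota_{D+\textbf{\textit{a}}}\circ s_D\simeq \iota_D$ (up to the identification of $g$ with its stabilised representative as in Definition~\ref{Def: 4.4}), so passing to the colimit gives $\iota_{D,\infty}\colon A_{D,\infty}(m,\XS;g)\to \Omega^m U(\mathcal K_\Sigma)$. By Theorem~\ref{thm: AKY1-stable} (via the equivalence $\Omega^m p_\Sigma$), $\iota_{D,\infty}$ is a homology equivalence. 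Now by Theorem~\ref{thm: sd}, each stabilisation map $s_{D+k\textbf{\textit{a}}}$ is a homology equivalence through dimension $D(d_1+ka_1,\dots,d_r+ka_r;m)$, and these dimensions tend to infinity; hence the natural map $A_D(m,\XS;g)\to A_{D,\infty}(m,\XS;g)$ is a homology equivalence through dimension $D(d_1,\dots,d_r;m)=(2\rmin-m-1)\db-2$. Composing, $\iota_D$ is a homology equivalence through dimension $D(d_1,\dots,d_r;m)$.

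The remaining point — and the one that actually produces the sharper bound $D_*(d_1,\dots,d_r;m)$ of (\ref{D*number}) rather than the weaker $D(d_1,\dots,d_r;m)$ — is to improve the stable comparison range. Here I would not use the truncated-after-$\db$ spectral sequence of \S\ref{section 4} directly, but instead run the argument of Theorem~\ref{thm: sd} with a truncation after the $\lfloor(\db+1)/2\rfloor$-th term, which is exactly the level up to which the non-degenerate resolution is still described by the configuration spaces $C_k$ (this is the content of Lemma~\ref{lemma: vector bundle*} and Lemma~\ref{lemma: E11}, valid for $k\leq\db$, but the spectral-sequence comparison via $\theta^1_{k,s}$ only needs the Thom-isomorphism identification to hold, and the relevant vanishing range pushes the first unknown column to $k=\lfloor(\db+1)/2\rfloor+1$). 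Redoing the diagonal bookkeeping with the sets $\mathcal S_t$ but with $\db+1$ replaced by $\lfloor(\db+1)/2\rfloor+1$ yields precisely $a(t)=(2\rmin-m-1)(\lfloor(\db+1)/2\rfloor+1)-(\lfloor(\db+1)/2\rfloor+1)+t+\cdots$, whose minimum over $t\geq 1$ gives the exponent $D_*(d_1,\dots,d_r;m)$. The main obstacle is thus the careful re-examination of which columns of the Veronese/non-degenerate spectral sequences are identified by the stabilisation map and which remain unknown, since the Veronese resolution does not map to Vassiliev's non-degenerate resolution of the continuous mapping space; this is where the interplay of the three resolutions (non-degenerate, Veronese, and truncated) described in the introduction is essential, and where one must be careful that the connectivity hypothesis $m\leq 2(\rmin-1)$ guarantees that $\C^r\setminus\ZS$ is $m$-connected so that Vassiliev's spectral sequence for $\Map^*(\RP^m,\C^r\setminus\ZS)$ applies.
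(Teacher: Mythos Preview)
Your argument is circular. In this paper, Theorem~\ref{thm: AKY1-stable} is \emph{deduced from} Theorem~\ref{thm: AKY1}: look at the proof of Theorem~\ref{thm: AKY1-stable} in \S\ref{section: main result}, where the diagram~(\ref{diagram: key}) together with Theorem~\ref{thm: AKY1} is used to show that $\gamma_m^{\#}\circ i'$ is a homology epimorphism, hence isomorphism, in the stated range. So you cannot invoke Theorem~\ref{thm: AKY1-stable} as an input to prove Theorem~\ref{thm: AKY1}. The logical flow is $\text{AKY1}\Rightarrow\text{AKY1-stable}$, and then $\text{AKY1-stable}+\text{Theorem~\ref{thm: sd}}\Rightarrow\text{Theorem~\ref{thm: I}}$; you have reversed the first arrow.

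You also have the two bounds backwards. For large $\db$ one has $D_*(d_1,\dots,d_r;m)\approx\tfrac12(2\rmin-m-1)\db$ while $D(d_1,\dots,d_r;m)\approx(2\rmin-m-1)\db$, so $D_*$ is the \emph{weaker} bound. Theorem~\ref{thm: AKY1} only claims the range $D_*$; the stronger range $D$ appears in Theorem~\ref{thm: I}, obtained precisely by combining the weak result (via its stable consequence) with the sharper stability of Theorem~\ref{thm: sd}. Your third paragraph, which tries to ``improve'' to $D_*$, is therefore pointing in the wrong direction, and in any case it still compares $A_D$ with $A_{D+\textbf{\textit{a}}}$, not with $\Omega^mU(\mathcal K_\Sigma)$.

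The paper's actual proof of Theorem~\ref{thm: AKY1} is a direct spectral-sequence comparison with no stabilisation at all: one sets up the Veronese (degenerate) resolution $\mathcal Z^D$ of $\Sigma_D$, giving a spectral sequence $\{\hat E^t_{k,s}\}\Rightarrow H_{s-k}(A_D(m,\XS;g))$ (Lemma~\ref{lmm: hatE1}), and Vassiliev's spectral sequence $\{E^t_{k,s}\}\Rightarrow H_{s-k}(\Omega^mU(\mathcal K_\Sigma))$ (Lemma~\ref{lemma: Vass}), which exists because $U(\mathcal K_\Sigma)$ is $2(\rmin-1)$-connected and $m\leq 2(\rmin-1)$. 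The map $\iota_D$ induces, via the non-degenerate resolution $\mathcal X^D$ and the filtration-preserving equivalence $\mathcal X^D\simeq\mathcal Z^D$, a homomorphism $\theta^t_{k,s}:\hat E^t_{k,s}\to E^t_{k,s}$. Both $E^1$-pages are identified with $\tilde H^{2rk-s}_c(C_k,\pm\Z)$ for $k\leq\lfloor(\db+1)/2\rfloor$, and both vanish when $s\leq(2\rmin-m)k-1$; the first column that can differ is $k=\lfloor(\db+1)/2\rfloor+1$, and the vanishing line there gives exactly the range $D_*$. This is the argument you gesture at in your last paragraph, but it is the entire proof, not a refinement of a stabilisation argument.
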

\begin{rmk}\label{rmk: 5.4}
{\rm The assertion of Theorem \ref{thm: AKY1} holds
even if the condition (\ref{H(k)}.2) is not satisfied.}
\end{rmk}
We postpone the proof of Theorem \ref{thm: AKY1}
to \S \ref{section: Vassiliev}, 
and
complete the proofs of 
Theorem \ref{thm: AKY1-stable} and Theorem \ref{thm: I}
by assuming it. 
\par\vspace{2mm}\par
\begin{proof}[Proof of Theorem \ref{thm: AKY1-stable}]
We set $D_*=D_*(d_1,\cdots ,d_r;m)$ and
let ${\Bbb F}$ denote  the field $\Z/p$ ($p$: prime) or $\Bbb Q$.
Since $m\geq 2$, by  (\ref{fiber sequence})
the map $\Omega^mp_{\Sigma}$ is a homotopy equivalence.
Then, from the diagram (\ref{diagram: key}) and Theorem \ref{thm: AKY1},
we see that the map $\gamma_m^{\#}\circ i^{\p}$
induces an epimorphism on homology groups
$H_k(\ ,{\Bbb F})$ for any $k\leq D_*$.
However, since there is a homotopy equivalence 
$F(\RP^m,\XS;g)\simeq \Omega^m\XS$,
$\dim_{\Bbb F}H_k(F(\RP^m,\XS ;g),{\Bbb F})=
\dim_{\Bbb F}H_k(\Omega^m\XS ,{\Bbb F})<\infty$ for any $k$.
Therefore, the map $\gamma^{\#}_m\circ i^{\p}$ induces an isomorphism on
$H_k(\ ,\Bbb F)$ for any $k\leq D_*$.
Thus, from  the diagram (\ref{diagram: key}), we see that so does the map
$i_D^{\p}$.
It then follows from the universal coefficient theorem that 
the map
$i_D^{\p}$ is a homology equivalence through dimension 
$D_*=D_*(d_1,\cdots ,d_r;m).$
Because
$\lim_{k \to \infty}D_*(d_1+ka_1,\cdots ,d_r+ka_r;m)=\infty$,
the diagram
(\ref{diagram_sd}), implies that the map
$s_{D,\infty}$ is a homology equivalence.
\end{proof}
\begin{proof}[Proof of Theorem \ref{thm: I}]

The assertion easily follows from  (\ref{diagram_sd}), Theorem \ref{thm: AKY1-stable}
and Theorem \ref{thm: sd}.
\end{proof}

\section{The Vassiliev spectral sequence.}\label{section: Vassiliev}

\paragraph{Spectral sequences induced from the Veronese simplicial resolution.}
Let $Z_D$ be the tautological normalization of $\Sigma_D$, and let
$\pi_D:Z_D\to \Sigma_D$ denote the first projection as in (ii) of
Definition \ref{Def: 3.1}.
Let $({\cal Z}^D,\pi_D:{\cal Z}^D\to \Sigma_D)$
denote the (degenerate) simplicial resolution
of the surjective map $\pi_D :Z_D\to \Sigma_D$ 
defined from the (generalized) Veronese embedding as in
\cite[page 782]{AKY1}.
We have the following natural filtration
$$
\phi ={\cal Z}^D_0\subset {\cal Z}^D_1=\Sigma_D\subset
{\cal Z}^D_2\subset {\cal Z}^D_3\subset 
\cdots \subset \bigcup_{k\geq 1}{\cal Z}^D_k={\cal Z}^D.
$$
By the same method as in (\ref{Al}) and 
(\ref{SS}), we obtain a spectral sequence
\begin{equation}\label{SSS1}
\{\hat{E}^t_{k,s},\ \hat{d}^t:\hat{E}^t_{k,s}\to \hat{E}^t_{k+t,s+t-1}\}
\quad
\Rightarrow 
\quad
H_{s-k}(A_D(m,\XS;g),\Z)
\end{equation}
such that
$\hat{E}^1_{k,s}=\tilde{H}_c^{2N_D +k-s-1}({\cal Z}^D_k\setminus {\cal Z}^D_{k-1},\Z).$
\begin{lmm}\label{lmm: hatE1}
\begin{enumerate}
\item[$\I$]
If $1\leq k\leq \lfloor \frac{\db +1}{2}\rfloor$, there is a natural isomorphism
$$
\hat{E}^1_{k,s}\cong 
\tilde{H}_c^{2rk-s}(C_k,\pm \Z)
$$
\item[$\II$]
If $r<0$ or $s<0$ or $s\leq (2\rmin -m)k-1$, then
$\hat{E}^1_{k,s}=0$.
\end{enumerate}
\end{lmm}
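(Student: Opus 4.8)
The plan is to mimic almost verbatim the analysis of the non-degenerate resolution carried out in Lemma \ref{lemma: vector bundle*}, Lemma \ref{lemma: E11} and Lemma \ref{lemma: E1}, but now keeping track of the fact that the Veronese (degenerate) resolution is built from a generalized Veronese embedding of degree roughly $2k$, so that the ``$k$ independent linear conditions'' estimate of \cite[Lemma 4.3]{AKY1} is available not up to $k\le \db$ but only up to $k\le\lfloor(\db+1)/2\rfloor$. First I would recall from \cite[page 782]{AKY1} that the $k$-th stratum ${\cal Z}^D_k\setminus{\cal Z}^D_{k-1}$ of the Veronese resolution fibres over the configuration space $C_k$ (the same $C_k$ as in (\ref{Ck}), a cell complex of dimension $(m+2r-2\rmin)k$), with fibre the product of an open $(k-1)$-simplex and the affine space cut out by the interpolation conditions $F(\textbf{\textit{x}}_j)=s_j$, $1\le j\le k$. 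The point where the Veronese embedding enters is exactly the verification that these conditions are independent: because the embedding has ``degree'' about $2k$, the relevant Vandermonde-type non-vanishing holds once $2k\le \db+1$, i.e. $k\le\lfloor(\db+1)/2\rfloor$; for such $k$ the fibre is an affine bundle of rank $l_{D,k}=2N_D-2kr+k-1$, just as in Lemma \ref{lemma: vector bundle*}.

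Granting this, part $\I$ is a Thom-isomorphism computation identical to Lemma \ref{lemma: E11}: the one-point compactification of ${\cal Z}^D_k\setminus{\cal Z}^D_{k-1}$ is the Thom space of this affine bundle over $C_k$, the sign representation of $S_k$ forces the twisted coefficients $\pm\Z$ (as on \cite[pp.~37--38, 114, 254]{Va}), and the numerical shift is
$$
(2N_D+k-s-1)-l_{D,k}=(2N_D+k-s-1)-(2N_D-2kr+k-1)=2rk-s,
$$
giving $\hat{E}^1_{k,s}\cong\tilde{H}_c^{2rk-s}(C_k,\pm\Z)$ for $1\le k\le\lfloor(\db+1)/2\rfloor$. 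For part $\II$: the stratum is empty (so the group vanishes) when $k<0$; the group vanishes for $s<0$ since $\tilde{H}_c^{2N_D+k-s-1}$ of a space of dimension $<2N_D+k-s-1$ vanishes; and the vanishing for $s\le(2\rmin-m)k-1$ follows, in the range where $\I$ applies, from $2rk-s>\dim C_k=(m+2r-2\rmin)k\iff s<2\rmin k-mk$, exactly as in Lemma \ref{lemma: E1}(iii). For $k$ outside the range of $\I$ one still has the crude bound $\dim({\cal Z}^D_k\setminus{\cal Z}^D_{k-1})\le \dim C_k+(2N_D-2kr+k-1)$ coming from the simplicial-resolution construction, which again kills $\tilde{H}_c^{2N_D+k-s-1}$ once $s\le(2\rmin-m)k-1$, so the claimed vanishing holds for all $k$.

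The main obstacle is the independence-of-conditions step underlying the affine bundle structure: one must be careful that the Veronese embedding chosen in \cite{AKY1} really does yield $k$ independent interpolation conditions precisely up to $k\le\lfloor(\db+1)/2\rfloor$ and not merely up to some smaller or larger bound, since this single numerical threshold is what propagates into the index range of part $\I$ and, ultimately, into the stabilization dimension $D_*(d_1,\dots,d_r;m)$ of Theorem \ref{thm: AKY1}. Everything else is a routine transcription of the arguments already given for the non-degenerate and truncated resolutions, with $\db$ replaced by $\lfloor(\db+1)/2\rfloor$ in the ``good range''.
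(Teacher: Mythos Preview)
Your plan is essentially the paper's own proof, and your treatment of part $\II$ (the uniform dimension bound $\dim({\cal Z}^D_k\setminus{\cal Z}^D_{k-1})\le (2N_D-2rk)+\dim C_k+(k-1)$ valid for all $k$) matches the paper exactly.

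There is, however, one misattribution in your account of part $\I$ that you should correct. You locate the source of the constraint $k\le\lfloor(\db+1)/2\rfloor$ in the independence of the interpolation conditions $F(\textbf{\textit{x}}_j)=s_j$; the paper does not. The interpolation-independence step (\cite[Lemma 4.3]{AKY1}) already works for all $k\le\db$, just as in the non-degenerate case of Lemma \ref{lemma: vector bundle*}; it is not the bottleneck. What actually forces the smaller range is the well-definedness of the projection $\pi_k:{\cal Z}^D_k\setminus{\cal Z}^D_{k-1}\to C_k$ itself: the fixed Veronese embedding used to build ${\cal Z}^D$ satisfies the general-position condition (i) of (\ref{def: 2.1}$)_k$ only when $k\le\lfloor(\db+1)/2\rfloor$, and beyond that range the $k$ image points need not span a $(k-1)$-simplex, so one cannot recover the configuration $\{x_1,\dots,x_k\}$ from an interior point $u$ of the simplex, and $\pi_k$ is not even defined. (Your phrase ``the embedding has `degree' about $2k$'' has the dependence backwards: the Veronese embedding has fixed degree governed by $\db$, and one needs $2k\le\db+1$ for $2k$ image points to be in general position.) This does not affect the validity of your plan or the numerical bound, but since you flag this step as ``the main obstacle'' it is worth getting the mechanism right.
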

\begin{proof}
(i)
By using the same argument as in the proof of Lemma \ref{Ck}
and \cite[Lemma 4.4 and Lemma 4.6]{AKY1},
we can show that
${\cal Z}^D_k\setminus {\cal Z}^D_{k-1}$ is an open disk bundle over
$C_k$ with rank $l_{D,k}$ if $1\leq k\leq \lfloor \frac{\db +1}{2}\rfloor$.
(Note that the projection $\pi_k :{\cal Z}^D_k\setminus {\cal Z}^D_{k-1}
\to C_k$ is well-defined only if $k\leq \lfloor \frac{\db +1}{2}\rfloor$,
because the condition (i) of (\ref{def: 2.1}$)_k$ is not satisfied
for $k>\lfloor \frac{\db +1}{2}\rfloor$.)
We can now prove the assertion (i) in exactly the same way as Lemma \ref{lemma: vector bundle*}.
\par
(ii)
It suffices to show that $\hat{E}^1_{k,s}=0$ if
$s\leq (2\rmin -m)k-1$ when $k,s\geq 0$.
In general,
\begin{eqnarray*}
\dim ({\cal Z}^D_k\setminus {\cal Z}^D_{k-1})
&\leq&
(2N_D-2rk)+\dim C_k +(k-1)
\\
&=& (2N_D-2rk)+(\dim \ZS +m)k+(k-1)
\\
&=&
2N_D-(2\rmin -m)k+k-1
\end{eqnarray*}
Since 
$2N_D+k-1-s>2N_D-(2\rmin -m)k+k-1$
$\Leftrightarrow$
$s\leq (2\rmin-m)k-1$
and
$\hat{E}^1_{k,s}=\tilde{H}_c^{2N_D +k-s-1}({\cal Z}^D_k\setminus {\cal Z}^D_{k-1},\Z)$,
$\hat{E}^1_{k,s}=0$ if $s\leq (2\rmin-m)k-1$ and (ii) follows.
\end{proof}

\paragraph{The Vassiliev spectral sequence.}
We now 
recall the spectral sequence constructed by V. Vassiliev
\cite[page 109--115]{Va}.
\par
From now on, we will assume that $m\leq N$ and that $X$ is 
a finite dimensional $N$-connected simplicial complex $C^{\infty}$-imbedded in $\R^L$.
We regard $S^m$ and $X$ as subspaces $S^n\subset \R^{m+1},\   X\subset \R^L$,
respectively.
We also choose and fix a map
$\varphi :S^m\to X$.
Observe that $\Map (S^m,\R^{L})$ is a linear space
and consider
the complements
${\frak A}_m(X)=\Map (S^m,\R^{L})\setminus \Map (S^m,X)$ and
$\tilde{\frak A}_m(X)=\Map^* (S^m,\R^{L})\setminus \Map^*(S^m,X).$
Note that  ${\frak A}_m(X)$ consists of all continuous maps
$f:S^m\to \R^{L}$ intersecting $\R^L\setminus X.$
We will denote by $\Theta^d_{\varphi}(X)\subset \Map (S^m,\R^{L})$ the subspace
consisting of all maps $f$ of the forms $f=\varphi +p$,
where $p$ is the restriction to $X$ of a polynomial map
$S^m\to \R^L$ of total degree $\leq d$.
Let
$\Theta^d_X\subset \Theta^d_{\varphi}(X)$ denote the subspace
consisting of all $f\in \Theta_{\varphi}^d(X)$
intersecting $\R^L\setminus X.$ 
In \cite[page 111-112]{Va} Vassiliev uses the space 
$\Theta^d(X)$ as a finite dimensional approximation of 
${\frak A}_m(X)$.\footnote{%
Note that the proof 
 of this fact given by Vassiliev makes use of the Stone-Weierstrass theorem, 
 so, although we are now not using the stable result of \cite[Theorem 2.1]{AKY1}, something like it is also implicitly  involved here. }
\par
Let $\tilde{\Theta}^d_X$ denote the subspace of $\Theta^d_X$ consisting of all maps 
$f\in \Theta^d_X$ which preserve
the base points.
By a variation of the preceding argument, Vassiliev also shows that  
$\tilde{\Theta}^d_X$ can be used as a finite dimensional approximation of  
$\tilde{\frak A}_m(X)$ \cite[page 112]{Va}.
\par
Let ${\cal X}_d\subset \tilde{\Theta}^d_X\times \R^{L}$ denote the 
subspace consisting of all pairs 
$(f,\alpha)\in\tilde{\Theta}^d_X \times \R^{L}$
such that $f(\alpha )\in \R^L\setminus X$, and
let $p_d:{\cal X}_d\to \tilde{\Theta}^d_X$ be the projection onto the first factor.
Then,  by making use of (non-degenerate) simplicial resolutions of the surjective maps 
$\{p_d:d\geq 1\}$,
one can construct a simplicial resolution 
$\{\tilde{\frak A}_m(X)\}$ of $\tilde{\frak A}_m,$ 
whose cohomology is naturally isomorphic to the homology of $\Map^*(S^m,X)=\Omega^mX$. 
From the natural filtration 
$\dis F_1\subset F_2\subset F_3\subset 
\cdots \subset \bigcup_{d=1}^{\infty}F_d=\{\tilde{\frak A}_m(X)\},$
we obtain the associated spectral sequence
(for $m\leq N$)
\begin{equation}\label{SSSSS}
\{E^t_{k,s},d^t:
E^t_{k,s}\to
E^t_{k+t,s+t-1}\}
\Rightarrow
H_{s-k}(\Omega^mX,\Z).
\end{equation}
\par\vspace{2mm}\par
Since $U({\cal K}_{\Sigma})$ is $2(\rmin -1)$-connected (by Lemma \ref{lmm: connectivity}),
we may assume that $m\leq N=2(\rmin -1)$ and consider the spectral sequence (\ref{SSSSS}) for
$X=U({\cal K}_{\Sigma})=\C^r\setminus \ZS$.
Note that the construction of this simplicial resolution is almost identical to that of the resolution ${\cal Z}^D$.
The only difference between the two concerns the following two points.
First, we use all polynomials passing through $\ZS$
of total degree $\leq d$
 instead of homogenous polynomials passing through $\ZS$
 satisfying the condition
 (\ref{H(k)}.2),
 for $d$ some fixed large integer.
Secondly we use a family of embeddings satisfying the condition
(\ref{def: 2.1}$)_k$ instead of a fixed embedding.
However, since $\hat{E}^1_{k,s}$ is determined independently of
the choice of embeddings if $d$ is sufficiently large, one can prove the following result 
by using  the same method
as in the case of the Veronese resolution. 

\begin{lmm}[\cite{Va}]
\label{lemma: Vass}
If $1\leq m\leq 2(\rmin -1)$,
there is a spectral sequence
\begin{equation}\label{VSS}
\{E^t_{k,s},d^t:
E^t_{k,s}\to
E^t_{k+t,s+t-1}\}
\Rightarrow
H_{s-k}(\Omega^mU({\cal K}_{\Sigma}),\Z)
\end{equation}
satisfying the following two conditions:
\begin{enumerate}
\item[$\I$]
If $k\geq 1$,
$E^1_{k,s}=
\tilde{H}_c^{2rk-s}(C_k,\pm \Z).$
\item[$\II$]
If $k<0$ or $s<0$ or $s\leq (2\rmin -m)k-1$, then $E^1_{k,s}=0$.
\qed
\end{enumerate}
\end{lmm}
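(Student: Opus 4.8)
The plan is to obtain Lemma~\ref{lemma: Vass} directly from Vassiliev's construction recalled above, applied to the $2(\rmin-1)$-connected complex $X = U(\mathcal{K}_{\Sigma}) = \C^r\setminus\ZS$ (Lemma~\ref{lmm: connectivity}), and to compute its $E^1$-term exactly as in the Veronese case, Lemma~\ref{lmm: hatE1}. Since $1 \le m \le 2(\rmin-1)$ we are in the range $m \le N$ in which the simplicial resolution $\{\tilde{\frak A}_m(X)\}$ of the discriminant has cohomology isomorphic to $H_*(\Omega^m X,\Z)$, so the skeletal filtration $F_1 \subset F_2 \subset \cdots$ of that resolution yields the spectral sequence (\ref{SSSSS}), which for this choice of $X$ is the one asserted in the statement; thus the existence part is automatic. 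To match up the $E^1$-terms one uses that $X$ is an open subset of $\C^r = \R^{2r}$, so that $L = 2r$ and $\R^L\setminus X = \ZS$, and that the base-point condition confines the points $\alpha\in S^m$ at which a map $f\colon S^m\to\R^{2r}$ takes a value in $\R^L\setminus X$ to $S^m\setminus\{*\} = \R^m$; this is what makes the stratifying spaces of the resolution coincide with the spaces $C_k$ of (\ref{Ck}).

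For part (i), the plan is to prove, for \emph{every} $k \ge 1$, that $F_k\setminus F_{k-1}$ is the total space of a real affine bundle over $C_k$ of rank $l_{D,k} = 2N - 2rk + k - 1$ (with $2N$ the dimension of the ambient affine space of degree-$\le d$ polynomial maps; $N$ cancels at the end), just as in Lemma~\ref{lemma: vector bundle*} and the proof of Lemma~\ref{lmm: hatE1}(i). The one place where this improves on the Veronese resolution is that the projection $\pi_k\colon(f,u)\mapsto\{(x_j,f(\mathbf{x}_j))\}_{j=1}^k$ is well-defined for all $k$ rather than only for $k\le\lfloor\frac{\db+1}{2}\rfloor$: the family of embeddings entering Vassiliev's resolution is chosen to satisfy condition (i) of $(\ref{def: 2.1})_k$ for every $k$ (as in Remark~\ref{Remark: non-degenerate}), so the points $x_1,\dots,x_k$ are recovered unambiguously from $u$, and for $d$ sufficiently large \cite[Lemma 4.3]{AKY1} still shows the $k$ evaluation conditions $f(\mathbf{x}_j) = s_j$ cut out an affine subspace of codimension $k$, so each fibre of $\pi_k$ is an open $(k-1)$-simplex times an affine space. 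Applying the Thom isomorphism as in the proof of Lemma~\ref{lemma: E11} --- with the sign representation of $S_k$ producing the twisted coefficients $\pm\Z$ --- together with the identity $(2N+k-s-1) - l_{D,k} = 2rk - s$, then gives $E^1_{k,s}\cong\tilde{H}_c^{2rk-s}(C_k,\pm\Z)$.

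For part (ii) the plan is the dimension estimate of Lemma~\ref{lmm: hatE1}(ii): from
\[
\dim(F_k\setminus F_{k-1}) \le (2N - 2rk) + \dim C_k + (k-1) = 2N - (2\rmin - m)k + k - 1
\]
and $E^1_{k,s} = \tilde{H}_c^{2N+k-s-1}(F_k\setminus F_{k-1},\Z)$ one reads off $E^1_{k,s} = 0$ as soon as $2N+k-s-1 > 2N-(2\rmin-m)k+k-1$, i.e. as soon as $s \le (2\rmin-m)k-1$; the case $k<0$ is vacuous, while $s<0$ is subsumed because $m\le 2(\rmin-1)$ forces $2\rmin-m\ge 2$, so $s\le -1\le(2\rmin-m)k-1$ for every $k\ge 0$. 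The step I expect to be the genuine obstacle is not any of these bookkeeping computations but the verification --- which I would take over wholesale from Vassiliev's framework \cite[page 109--115]{Va}, including the Stone--Weierstrass approximation that legitimizes using the finite-dimensional model $\tilde{\Theta}^d_X$ in place of $\tilde{\frak A}_m(X)$ --- that the $E^1$-term is independent both of the chosen family of embeddings and of the truncation degree $d$ once $d$ is large, so that the concrete homogeneous computation of Lemma~\ref{lmm: hatE1} may legitimately be transplanted to the resolution of the genuine infinite-dimensional discriminant. Once that independence is secured, the affine bundle structure and the Thom isomorphism finish the argument.
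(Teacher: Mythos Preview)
Your proposal is correct and follows essentially the same approach as the paper: the paper's ``proof'' is really the discussion preceding the lemma, which says only that Vassiliev's construction differs from the Veronese one in using polynomials of total degree $\le d$ and a non-degenerate family of embeddings, and that since the $E^1$-term is independent of these choices for $d$ large the computation of Lemma~\ref{lmm: hatE1} carries over verbatim. You have spelled out precisely this argument --- including the key point that non-degeneracy of the embeddings makes $\pi_k$ well-defined for \emph{all} $k\ge 1$, which is what upgrades the range of validity from $k\le\lfloor\frac{\db+1}{2}\rfloor$ to all $k$ --- and correctly flagged that the substantive input is Vassiliev's Stone--Weierstrass-type approximation.
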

\par\vspace{2mm}\par
Now we can give the proof of Theorem \ref{thm: AKY1}.

\begin{proof}[Proof of Theorem \ref{thm: AKY1}]
Consider the  spectral sequences (\ref{SSS1}) and (\ref{VSS}).
Note that the image of the  map $\iota_D$ lies 
in a space of mappings that arise from restrictions of polynomial mappings
$\R^{m+1}\to \C^r=\R^{2r}$.
Since  ${\cal X}^D$ is a non-degenerate simplicial resolution,  the map
 $\iota_D$ naturally extends to a filtration
preserving map 
$\tilde{\pi}: {\cal X}^D\to \{\tilde{\frak A}_m(U({\cal K}_{\Sigma}))\}$
between resolutions.
By \cite[Lemma 2.2]{KY4} there is a filtration
preserving homotopy equivalence 
$q^{\Delta} :{\cal X}^D\stackrel{\simeq}{\rightarrow}{\cal Z}^D$.
The filtration preserving maps
$
\begin{CD}
{\cal Z}^D @<q^{\Delta}<\simeq< {\cal X}^D @>\tilde{\pi}>>
\{\tilde{\frak A}_m(U({\cal K}_{\Sigma}))\}
\end{CD}$
induce a homomorphism of spectral sequences
$\{ \theta^t_{k,s}:\hat{E}^t_{k,s}\to E^t_{k,s}\},$
where
$$
\{\hat{E}^t_{k,s},\hat{d}^t\}\Rightarrow H_{s-k}(A_D(m,\XS;g),\Z)
\quad
\mbox{ and }\quad
\{E^t_{k,s},d^t\}\Rightarrow H_{s-k}(\Omega^m U({\cal K}_{\Sigma} ),\Z).
$$
Then by the naturality of the Thom isomorphism and
the  argument used in the proof of Lemma \ref{lemma: Thom},
we can show that
$\theta^1_{k,s}:\hat{E}^1_{k,s}\stackrel{\cong}{\rightarrow}
E^1_{k,s}$ is an isomorphism for any $s$ 
as long as $k\leq \lfloor \frac{\db+1}{2}\rfloor .$
By Lemma \ref{lmm: hatE1} and
\ref{lemma: Vass}, we see that
$\theta^{\infty}_{k,s}:
\hat{E}^{\infty}_{k,s}\stackrel{\cong}{\rightarrow}
E^{\infty}_{k,s}$ is always an isomorphism for any $s$
if $k\leq \lfloor \frac{\db+1}{2}\rfloor$.
Now, consider the positive integer $D_{\rm min}$:
$$
D_{\rm min}=\min\{
N\in \Z_{\geq 1}\ \vert \ N\geq s-k,\ 
s\geq (2\rmin -m)k,\ 
1\leq k<\lfloor \frac{\db +1}{2}\rfloor +1
\}.
$$
Clearly  $D_{\rm min}$ is the
largest integer $N$ which satisfies the inequality
$(2\rmin -m)k> k+N$
for $k=\lfloor\frac{\db +1}{2}\rfloor +1$.
Hence,
$D_{\rm min}
=
(2\rmin -m-1)(\lfloor\frac{\db +1}{2}\rfloor +1)-1=D_*(d_1,\cdots ,d_r;m ).$
We note that, for dimensional reasons,
$\theta^{\infty}_{k,s}:
\hat{E}^{\infty}_{k,s}
\stackrel{\cong}{\rightarrow} 
E^{\infty}_{k,s}$ is always
an isomorphism when
$k\leq \lfloor \frac{\db +1}{2}\rfloor$ and $s-k\leq D_*(d_1,\cdots ,d_r;m )$.
Note also that by Lemma \ref{lmm: hatE1} and
Lemma \ref{lemma: Vass},
$\hat{E}^1_{k,s}=E^1_{k,s}=0$ when
$s-k\leq D_*(d_1,\cdots ,d_r;m )$ and $k>\lfloor\frac{\db +1}{2}\rfloor$.
Hence, we see that
$\theta^{\infty}_{k,s}:
\hat{E}^{\infty}_{k,s}\stackrel{\cong}{\rightarrow} 
E^{\infty}_{k,s}$ is always
an isomorphism if $s\leq k+D_*(d_1,\cdots ,d_r;m)$.
Thus, it follows from the Comparison Theorem of spectral sequences that
the map $\iota_D$ is a homology equivalence through dimension
$D_*(d_1,\cdots ,d_r;m)$.
\end{proof}

\section{
Some facts and  conjectures.}\label{section: Appendix}

\paragraph{A basic lemma.}
The aim of the first part of this  section is to provide a simple basic lemma used in this paper for which we do not know any convenient reference.
\par\vspace{1mm}\par

\par
\begin{lmm}\label{lmm: A1}
Let $K$ be a CW complex and
$X=K\cup_fe^m$ with $\dim K<m$.
For $g\in \Map^* (K,Y)$, let
$F(X,Y;g)$ denote the space
given by
$F(X,Y;g)=\{h\in \Map^* (X,Y): h\vert K=g\}$.
If $F(X,Y;g)\not= \emptyset$,
there is a homotopy equivalence
$F(X,Y;g)\simeq  \Omega^mY$.
\end{lmm}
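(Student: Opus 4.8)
The plan is to exhibit $F(X,Y;g)$ as the (homotopy) fibre of the restriction map $\mathrm{res}\colon\Map^*(X,Y)\to\Map^*(K,Y)$ over $g$, and then identify that fibre with $\Omega^mY$ using the cofibration $K\hookrightarrow X$ together with the explicit attaching of a single $m$-cell. First I would recall that for a CW pair $(X,K)$, the inclusion $i\colon K\hookrightarrow X$ is a cofibration, hence the induced restriction map $i^*\colon\Map^*(X,Y)\to\Map^*(K,Y)$ is a Hurewicz fibration (this is standard; the homotopy extension property for $(X,K)$ gives the covering homotopy property for $i^*$). By definition $F(X,Y;g)=(i^*)^{-1}(g)$ is literally the fibre over the point $g\in\Map^*(K,Y)$. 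Since we are told $F(X,Y;g)\neq\emptyset$, fix a basepoint $h_0\in F(X,Y;g)$ of this fibre.

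Next I would compute the fibre directly from the cell structure. Because $\dim K<m$ and $X=K\cup_f e^m$ with attaching map $f\colon S^{m-1}\to K$, the quotient $X/K$ is homeomorphic to $S^m$, and collapsing $K$ gives a cofibre sequence $K\xrightarrow{i} X\xrightarrow{q} S^m$. A based map $h\colon X\to Y$ with $h|_K=g$ is the same data as a based map on the $m$-cell whose restriction to the boundary sphere $S^{m-1}$ is the prescribed map $g\circ f$; that is, $F(X,Y;g)$ is the space of based extensions of the fixed map $g\circ f\colon S^{m-1}\to Y$ over the disk $D^m$. Writing this as a pullback: $F(X,Y;g)$ is homeomorphic to the space of paths in $\Map^*(S^{m-1},Y)$-land... more precisely, $F(X,Y;g)=\{\phi\in\Map((D^m,\partial D^m\cup *),(Y,*))\mid \phi|_{\partial D^m}=g\circ f\}$, a fibre of the restriction $\Map^*(D^m,Y)\to\Map^*(S^{m-1},Y)$. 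Since $D^m$ is contractible, $\Map^*(D^m,Y)\simeq *$, so this fibre is (weakly, hence by CW considerations genuinely) homotopy equivalent to the loop space of $\Map^*(S^{m-1},Y)$ based at $g\circ f$ — and that loop space is homotopy equivalent to $\Omega\Map^*(S^{m-1},Y)\simeq \Omega\,\Omega^{m-1}Y=\Omega^mY$ once one trivializes using the chosen extension $h_0$ (the nonemptiness hypothesis is exactly what provides this trivialization of the torsor).

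The cleanest way to organize the last step is via the fibration $F(X,Y;g)\to \Map^*(X,Y)\xrightarrow{i^*}\Map^*(K,Y)$: applying $\Map^*(-,Y)$ to the cofibre sequence $S^{m-1}\xrightarrow{f}K\xrightarrow{i}X$ yields a fibration sequence, and the fibre of $i^*$ over $g$ sits in a fibration whose total space $\Map^*(X,Y)$ maps to $\Map^*(K,Y)$ with the connecting map $\Map^*(K,Y)\to\Map^*(S^{m-1},Y)$ given by $f^*$; the fibre of $i^*$ over $g$ is then the homotopy fibre of $f^*$ over $g$, which by the path-loop fibration over $\Map^*(S^{m-1},Y)$ is $\Omega_{g\circ f}\Map^*(S^{m-1},Y)\simeq\Omega^mY$. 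I expect the main (only real) obstacle to be bookkeeping the basepoints and the identification $\Omega\Map^*(S^{m-1},Y)\simeq\Omega^mY$ carefully — in particular using $F(X,Y;g)\neq\emptyset$ to pick a section making the relevant torsor trivial so that "homotopy fibre" may be replaced by "actual fibre up to homotopy equivalence" — but none of this is deep; it is the standard cofibration/evaluation-fibration package applied to a single attached cell.
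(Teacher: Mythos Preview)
Your proposal is correct and follows essentially the same route as the paper: both arguments identify $F(X,Y;g)$, via the characteristic map of the attached $m$-cell, with the fibre of the restriction fibration $r\colon \Map^*(D^m,Y)\to \Map^*(S^{m-1},Y)=\Omega^{m-1}Y$ over the point $g\circ f$, and then use contractibility of $\Map^*(D^m,Y)$ to conclude that this fibre is homotopy equivalent to $\Omega^m Y$. The paper's version is simply more terse---it omits your preliminary remark that $i^*\colon\Map^*(X,Y)\to\Map^*(K,Y)$ is a fibration and your alternative cofibre-sequence reformulation---but the substance is identical.
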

\begin{proof}
By using the characteristic map of the top cell in $X$, 
$F(X,Y;g)$ can be identified with the space of all based maps
$h:D^m \to Y$ which restrict to the same fixed map on the boundary $S^{m-1}$,
and it can be regarded as the fiber of the map
$r:\Map^* (D^m,Y)\to \Map^*(S^{m-1},Y)=\Omega^{m-1}Y$ given by
$r(h)=h\vert S^{m-1}$.
Since $r$ is a fibration with fiber $\Omega^mY$
and $\Map^*(D^m,Y)$ is contractible,
there is a homotopy equivalence
$F(X,Y;g)\simeq \Omega^mY.$
\end{proof}

\paragraph{Minimal degree of algebraic maps.}
In the second part of this section we define  the minimal degree of an algebraic map $\RP^m$ to $\XS$. It plays no role in the current paper but we think it is of sufficient independent interest and hope to make use of it in the future. 
First, we give the proof of Proposition \ref{prp: the minimal degree}
stated in \S \ref{section 1}.
%
%


\begin{proof}[Proof of Proposition \ref{prp: the minimal degree}]
Since $f:\RP^m\to \XS$ is an algebraic map, 
there is a Zariski open subset $U$ of $\CP^m$ such that $U$ contains 
the set of $\mathbb{R}$-valued points of $\RP^m$, which we also denote by $\RP^m$ by abuse of notation, 
and there is a regular map $\varphi:U\to \XS$ such that $f=\varphi|_{\mathbb{R}P^m}$.
Without loss of generality
we may assume that $U$ is the largest open subset of $\CP^m$ where $f$ is defined.
Then $\CP^m\setminus U$ has codimension at least two in 
$\CP^m$,
since $X_{\Sigma}$ is proper and $\CP^m$ is normal.
The following proof is almost the same as that of \cite[Theorem 3.1]{C},
but in our case, the map $\varphi$ is defined only on the Zariski open subset $U$ of $\CP^m$,
which requires some modifications.
Let $\pi:\mathbb{C}^{m+1}\setminus \{{\bf 0}\}\to \CP^m$ be the Hopf fibering,
and set $\tilde{U}=\pi^{-1}(U)$.
We also denote by $\pi$ its restriction $\tilde{U}\to U$, and
let
$p_{\Sigma}:\mathbb{C}^r\setminus Z_{\Sigma}\to \XS$ be the natural toric morphism as in (\ref{fiber sequence}).
We shall show the existence of $D$ and $(f_1,\cdots,f_r)$.
\par
Let 
$(\mathcal{O}_{X_{\Sigma}}(D_{\rho}),\iota_{\rho},c_{\chi^{\bf m}})
_{\rho\in \Sigma (1),{\bf m}\in \Z^n}
$ denote
the universal $\Sigma$-collection defined in \cite[page 252]{C}
and let 
$(L_{\rho}, u_{\rho}, c_{\bf m})
_{\rho\in \Sigma (1),{\bf m}\in \Z^n}
$ 
be its pull back by $\varphi$.
Since $\varphi$ is a regular map, $L_{\rho}$ is an algebraic line bundle on $U$.
Moreover, since $\CP^m\setminus U$ has codimension at least two in $\CP^m$,
 $L_{\rho}$ can be extended to a line bundle on $\CP^m$
which is isomorphic to $\mathcal{O}_{\CP^m}(d_{\rho})$
for some integer $d_{\rho}$.
This isomorphism induces an isomorphism 
$H^0(U,L_{\rho})\cong H^0(\CP^m,\mathcal{O}_{\CP^m}(d_{\rho}))$.
Let $f'_{\rho}\in H^0(\CP^m,\mathcal{O}_{\CP^m}(d_{\rho}))$ 
denote the element corresponding to $u_\rho\in H^0(U,L_{\rho})$ by the above isomorphism.
The isomorphisms $c_{\bf m}$ on $U$ can be extended to those on $\CP^m$
which induce isomorphisms 
$c'_{\bf m}:
\otimes_{\rho}\mathcal{O}_{\CP^m}(d_{\rho})^{\langle {\bf m},\mathbf{n}_{\rho}\rangle}
\cong 
\mathcal{O}_{\mathbb{C}\P^m}$
on $\mathbb{C}\P^m$.
In terms of (\ref{H(k)}.1), this implies that $\sum_{k=1}^rd_k{\bf n}_k={\bf 0}$.
A collection $(\mathcal{O}_{\CP^m}(d_{\rho}), f'_{\rho}, c'_{\bf m})$ 
is not necessarily a $\Sigma$-collection on $\CP^m$
because sections $f'_{\rho}$ do not satisfy the non-degeneracy 
condition outside $U$.
However its restriction $(\mathcal{O}_{\CP^m}(d_{\rho}), f'_{\rho}, c'_{\bf m})|_U$ to $U$ is 
a $\Sigma$-collection on $U$. 
For each ${\bf m}\in \Z^n$, we have 
a canonical isomorphism $c_{\bf m}^{\textrm{can}}:\otimes_{\rho}\mathcal{O}_{\CP^m}(d_{\rho})^{\langle {\bf m},\mathbf{n}_{\rho}\rangle}
\stackrel{\cong}{\rightarrow} \mathcal{O}_{\mathbb{C}\P^m}$
as in the proof of  \cite[Theorem 3.1]{C}.
Then, as in the final paragraph of the proof of \cite[Theorem 3.1]{C},
by taking $\lambda_p\in \mathbb{C}$ suitably and setting $f_{\rho}=\lambda_{\rho}f'_{\rho}$,
we have an equivalence $(\mathcal{O}_{\CP^m}(d_{\rho}), f'_{\rho}, c'_{\bf m})|_U\sim (\mathcal{O}_{\CP^m}(d_{\rho}), f_{\rho}, c^{\textrm{can}}_{\bf m})|_U$
of $\Sigma$-collection on $U$.
Now  define a morphism $F:\tilde{U}\to \mathbb{C}^r\setminus Z_{\Sigma}$
by $F(\textbf{\textit{x}})=(f_{\rho}(\textbf{\textit{x}}))_{\rho\in \Sigma(1)}$ for $\textbf{\textit{x}}\in \tilde{U}$.
Note that the non-degeneracy of $f_{\rho}$ on $U$ ensures that 
$(f_{\rho}(\textbf{\textit{x}}))_{\rho\in \Sigma(1)}\notin Z_{\Sigma}$
for all $\textbf{\textit{x}}\in \tilde{U}$.
Since the pull back of $(\mathcal{O}_{\CP^m}(d_{\rho}), f_{\rho}, c^{\textrm{can}}_{\bf m})|_U$ to $\tilde{U}$ is 
$(\mathcal{O}_{\tilde{U}}, f_{\rho}, 1)$,
we see that $\varphi\circ\pi=p_{\Sigma}\circ F$.
By reindexing $({f_{\rho}})_{\rho\in \Sigma(1)}$ as $(f_1,\cdots,f_r)$ 
in accordance with (\ref{H(k)}.1) and setting $D=(d_1,\cdots,d_r)$,
we obtain an element $(f_1,\cdots ,f_r)\in A_{D,\Sigma}(m)$.
Since $\CP^m\setminus U$ has codimension at least two in $\CP^m$,
we see that 
$\dim_{\mathbb{C}}pr_1^{-1}(F)<m$, 
where $F=(f_1,\cdots ,f_r)$ and $pr_1$ is the projection as in \S 1. 
Therefore, we have $(f_1,\cdots ,f_r)\in A_{D,\Sigma}(m)^{\circ}$.
\par
Suppose that $(h_1,\cdots ,h_r)\in A_{D',\Sigma}(m)^{\circ}$ also represents the same algebraic map $f$
for some $r$-tuple $D'=(d'_1,\dots,d'_r)\in \Z^r$
such that $\sum_{k=1}^rd'_k{\bf n}_k={\bf 0}$.
Set $H=(h_1,\cdots ,h_r)$ and $V=\CP^m\setminus \pi(pr_1^{-1}(H))$.
Then $V\subset U$ and 
$(\mathcal{O}_{\CP^m}(d_{\rho}), f_{\rho}, c^{\textrm{can}}_{\bf m})|_V\sim (\mathcal{O}_{\CP^m}(d'_{\rho}), h_{\rho}, c^{\textrm{can}}_{\bf m})|_V$.
Since $\CP^m\setminus V$ has codimension at least two in $\CP^m$, 
the isomorphism $\mathcal{O}_{\CP^m}(d_{\rho})|_V\cong \mathcal{O}_{\CP^m}(d'_{\rho})|_V$ is given by a nonzero constant $\mu_{\rho}\in \mathbb{C}$.
Therefore $d_{\rho}=d'_{\rho}$ for all $\rho\in \Sigma(1)$ and we have $D=D'$.
Moreover, as in the proof of \cite[Theorem 3.1]{C}, this implies that 
$(\mu_1,\cdots ,\mu_r)\in\GS$ and $(h_1,\cdots ,h_r)=(\mu_1f_1,\cdots ,\mu_rf_r)$.
\end{proof}
By Proposition \ref{prp: the minimal degree}, we can define
the minimal degree of an algebraic map as follows.

\begin{dfn}\label{dfn: degree}
{\rm
Let
$D=(d_1,\cdots ,d_r)\in \Z^r$ and let
$f:\RP^m\to \XS$ be an algebraic map. 
Then the map
$f$ is called
an algebraic map {\it of minimal degree} $D$ if
it can be represented as 
$f=j_D^{\p}(f_1,\cdots ,f_r)=[f_1,\cdots ,f_r]$ for some
$(f_1,\cdots ,f_r)\in A_{D,\Sigma}(m)^{\circ}.$
}
\end{dfn}
Note that the minimal degree 
depends only on the map itself
and not on its representative.

\begin{rmk}\label{rmk: homeo}
{\rm 
(i)
We denote by $\Alg_{D,{\rm min}}(\RP^m,\XS)\subset \Alg_D(\RP^m,\XS)$ the subspace
consisting of all
algebraic maps $f:\RP^m\to \XS$ of minimal degree $D$.
Since $A_{D,\Sigma}(m)^{\circ}$ is a $\GS$-invariant subspace of
$A_{D,\Sigma}(m)$,
let $\widetilde{A_{D}}(m,\XS)^{\circ}$ denote the orbit space
$\widetilde{A_{D}}(m,\XS)^{\circ}=A_{D,\Sigma}(m)^{\circ}/\GS.$
Then
we can easily see that
there is a homeomorphism
\begin{equation}
\widetilde{A_{D}}(m,\XS)^{\circ}
\stackrel{\cong}{\rightarrow}
\Alg_{D,{\rm min}}(\RP^m,\XS).
\end{equation}
\par
(ii)
Let
$\textbf{\textit{a}}=(a_1,\cdots ,a_r)\in (\Z_{\geq 1})^r$ and suppose that 
$\sum_{k=1}^ra_k{\bf n}_k={\bf 0}$.
Then if we set $\tilde{g}=\sum_{k=1}^rz_k^2$ and
$f=[f_1,\cdots ,f_r]\in \Alg_D(\RP^m,\XS)$,
then we can easily see that
$f=[\tilde{g}^{a_1}f_1,\cdots ,\tilde{g}^{a_r}f_r]\in 
\Alg_{D+\textbf{\textit{a}}}(\RP^m,\XS).$
Hence,
the space $\Alg_D(\RP^m,\XS)$ can be identified with the subspace of $\Alg_{D+\textbf{\textit{a}}}(\RP^m,\XS)$
by
\begin{equation*}\label{equ: representation}
\Alg_D(\RP^m,\XS) \stackrel{\subset}{\rightarrow} \Alg_{D+\textbf{\textit{a}}}(\RP^m,\XS);\qquad
[f_1,\cdots ,f_r] \mapsto [f_1\tilde{g}^{a_1},\cdots ,f_r\tilde{g}^{a_r}].
\end{equation*}
Note that $\Map_D(\RP^m,\XS)=\Map_{D^{\p}}(\RP^m,\XS)$ may happen
even if $D\not=D^{\p}$.
However, $\Alg_{D,{\rm min}}(\RP^m,\XS)\cap\Alg_{D^{\p},{\rm min}}(\RP^m,\XS)=
\emptyset$ if $D\not=D^{\p}$.
\par
(iii)
It may happen that $d_k<0$ for some $k$;
the section $f_k\in H^0(\CP^m,{\cal O}(d_k))$
 will be zero if $d_k<0$ 
(see Example \ref{exa: example}).}
\qed
\end{rmk}

\begin{exa}\label{exa: example}
{\rm
Let $H(k)$ and $\{\mathbf{n}_k\}_{k=1}^4$ be the Hirzebruch surface 
and its primitive generators as in Example~\ref{exa: Hirzebruch}.
Suppose that $(d_1,d_2,d_3,d_4)=(1,-k,1,0)$.
Then we have $\sum_{j=1}^4d_j\mathbf{n}_j=\mathbf{n}_1-k\mathbf{n}_2+\mathbf{n}_3=\mathbf{0}$.
Let $k\geq 0$ and we may regard 
$H(k)$ as the orbit space
$$
H(k)=\{(y_1,y_2,y_3,y_4)\in\C^4\ \vert \  (y_1,y_3)\not= (0,0),\ (y_2,y_4)\not= (0,0)\}/\GS ,
$$
where
$\GS =\{(\mu_1,\mu_2,\mu_1,\mu_1^k\mu_2)\ \vert \ \mu_1,\mu_2\in \C^*\}$
as in (\ref{H(k)}).
Now consider the algebraic map $f:\RP^1\to H(k)$ 
defined by $f([x_0:x_1])=[(x_0^2+x_1^2)x_0,0,(x_0^2+x_1^2)x_1,1]$.
Then clearly $f\in \Alg_{(3,-3k,3,0)} (\RP^1,H(k))$.
However, by the $\GS$-action, we have  $f([x_0:x_1])=[x_0,0,x_1,1]$.
Hence $f\in \Alg_{(1,-k,1,0), \textrm{min}} (\RP^1,H(k))$.
The map $f$ is also the limit of the family of algebraic maps of minimal degree $(3,-3k,3,0)$.
Indeed it is the limit of a family $f_t([x_0:x_1])=[(x_0^2+x_1^2)x_0,0,(x_0^2+tx_1^2)x_1,1]$
for $t>1$.
\qed
}
\end{exa}

\paragraph{Conjectures concerning spaces of algebraic maps.}
The purpose of the third part of this section is to formally state the analogues 
of Theorem \ref{thm: I} and Corollary \ref{cor: II} concerning  approximation of spaces of continuous maps by algebraic maps. 
As explained in the introduction,  the analogous results are true in the complex case.
\begin{conj}[cf. \cite{AKY1}, Conjecture 3.8]\label{conj: Alg}
Under the same assumptions as Theorem \ref{thm: I}, the natural projection maps
$$
\begin{cases}
\Psi^{\p}_{D}:A_D(m,\XS;g)\to \Alg_D^*(\RP^m,\XS;g),
\quad
\Psi_{D}:A_D(m,\XS)\to \Alg^*_D(\RP^m,\XS)
\\
\Gamma_{D}:\widetilde{A_D}(m,\XS)\to \Alg_D(\RP^m,\XS)
\end{cases}
$$
are homotopy equivalences.
\qed
\end{conj}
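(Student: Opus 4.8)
The plan is to prove that the natural projection $\Gamma_D:\widetilde{A_D}(m,\XS)\to\Alg_D(\RP^m,\XS)$ is a quasi-fibration; after the identifications made in \S\ref{section 1} the maps $\Psi_D$ and $\Psi^{\p}_{D}$ are restrictions of $\Gamma_D$ to a ($\GS$-saturated) subspace and to the slice over $g$, so the same argument handles all three. Since each fibre of $\Gamma_D$ is contractible, a quasi-fibration structure gives a weak homotopy equivalence, and since $\widetilde{A_D}(m,\XS)$ is a manifold and $\Alg_D(\RP^m,\XS)$ has the homotopy type of a CW complex, this upgrades to a homotopy equivalence.

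First I would make the contractibility of the fibres precise. Given $f\in\Alg_D(\RP^m,\XS)$ with minimal-degree representative $(h_1,\cdots ,h_r)$ as in Proposition~\ref{prp: the minimal degree}, every $(f_1,\cdots ,f_r)\in A_{D,\Sigma}(m)$ with $[f_1,\cdots ,f_r]=f$ is obtained from $(h_1,\cdots ,h_r)$ by pointwise multiplication by a positive-valued homogeneous rational function whose coordinates are constrained to lie in $\GS$; the collection of such multipliers with the prescribed degrees $d_1,\cdots ,d_r$ is carved out by linear conditions inside a cone of positive forms, hence is convex, and passing to the $\GS$-quotient preserves contractibility. I would then stratify $\Alg_D(\RP^m,\XS)$ by minimal degree — equivalently, by the dimension of the fibre of $\Gamma_D$. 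Over the open dense stratum $\Alg_{D,{\rm min}}(\RP^m,\XS)$ the map $\Gamma_D$ is a homeomorphism onto its image by Remark~\ref{rmk: homeo}(i), because on $A_{D,\Sigma}(m)^{\circ}$ a representative is unique up to $\GS$, while over each lower stratum the fibres are the positive-dimensional convex sets just described.

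The substantial step, and the one I expect to be the main obstacle, is the gluing: one must verify the Dold--Thom criterion along each stratum, i.e.\ that a suitable neighbourhood in $\widetilde{A_D}(m,\XS)$ of the part lying over a lower stratum deformation retracts onto it compatibly with $\Gamma_D$, so that no homotopy is lost when a map of higher minimal degree is approached by maps of minimal degree $D$. This amounts to controlling how the degree-$D$ representatives of a family of algebraic maps, and in particular the positive multipliers relating them to the minimal representative, behave as the minimal degree drops — precisely the local-triviality-up-to-homotopy of $\Gamma_D$ that the introduction flags as unavailable. I would attempt it by adapting the partial-normalization and simplicial-resolution techniques of \S\ref{section 3}, applied now to the map $\Gamma_D$ itself, to trivialize over an open cover of each stratum and patch with the two gluing lemmas of Dold--Thom; a direct construction of a global section of $\Gamma_D$, i.e.\ of a canonical choice of representative, would suffice equally well, but the absence of any such canonical choice is the heart of the difficulty. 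Short of a full resolution, Theorem~\ref{thm: I} together with a separate homology estimate for the inclusion $\hat{i}_D^{\C}:\Alg_D^*(\RP^m,\XS;g)\hookrightarrow F(\RP^m,\XS;g)$ yields at least a homology equivalence through a range.
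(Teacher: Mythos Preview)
The statement you are attempting to prove is labelled a \emph{Conjecture} in the paper, and the paper does not prove it. Immediately after stating it, the authors write that they ``strongly believe'' it is true, note that the fibres of $\Psi_D$ are contractible, and observe that if one could show $\Psi_D$ is a quasi-fibration (or satisfies a Vietoris--Begle-type hypothesis) the result would follow; they then say ``the general case seems difficult as the topology of the quotient space $\Alg^*_D(\RP^m,\XS)$ is complicated.'' There is therefore no proof in the paper to compare your attempt against.

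Your proposal is not a proof but a strategy, and you yourself identify its gap: the Dold--Thom gluing step along the minimal-degree stratification. That is exactly the obstruction the authors flag in the introduction and in \S\ref{section: Appendix}, and nothing in your outline overcomes it. The earlier parts of your sketch (contractibility of fibres via convexity of the space of positive multipliers, and the homeomorphism over the top stratum from Remark~\ref{rmk: homeo}(i)) are fine and are already implicit in the paper's discussion. But the proposal to ``adapt the partial-normalization and simplicial-resolution techniques of \S\ref{section 3}'' to the map $\Gamma_D$ itself is not a concrete plan: those resolutions are built for discriminants inside affine spaces and compute homology via Alexander duality, whereas here you would need local homotopy control of the map $\Gamma_D$ near the lower strata, which is a different kind of statement. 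Likewise, your fallback suggestion --- combining Theorem~\ref{thm: I} with a separate estimate for $\hat{i}_D^{\C}$ --- presupposes exactly the unavailable comparison between $A_D(m,\XS;g)$ and $\Alg_D^*(\RP^m,\XS;g)$, so it is circular. In short, your outline correctly locates the difficulty but does not resolve it; the statement remains open.
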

We strongly believe that Conjecture \ref{conj: Alg} is true.
As we have mentioned before, the natural projection $\Psi_D$ has contractible fibers.
If $\Psi_D$ is a quasi-fibration or satisfies the condition of the Vietoris-Begle theorem or has some other property of this kind,  then it must be a homotopy (or at least homology) equivalence.
We proved this for the simplest case in \cite{KY5}, but
the general case seems difficult as
the topology of the quotient space $\Alg^*_D(\RP^m,\XS)$
is complicated.



\vspace{1cm}



\vspace{0.5cm}

\profile{Andrzej \textsc{Kozlowski}}
{
Institute of Applied Mathematics and Mechanics,
University of Warsaw 
\\
Banacha 2, 02-097
Warsaw, Poland
\\
akoz@mimuw.edu.pl}

\profile{Masahiro \textsc{Ohno}}
{
Department of Mathematics,
University of Electro-Communications 
\\
Chofu, Tokyo 182-8585, Japan
\\
masahiro-ohno@uec.ac.jp
}

\profile{Kohhei \textsc{Yamaguchi}}
{
Department of Mathematics,
University of Electro-Communications
\\
Chofu, Tokyo 182-8585, Japan
\\
kohhei@im.uec.ac.jp}

\label{finishpage}


\begin{thebibliography}{Abc}


\bibitem{AJ}M.F. Atiyah and J.D.S. Jones,
Topological aspects of Yang-Mills theory, Commun. Math. Phys.
{\bf 59} (1978), 97--118.
\bibitem{AKY1}
M. Adamaszek, A. Kozlowski  and K. Yamaguchi,
Spaces of algebraic and continuous maps between real algebraic varieties,
 Quart. J. Math. {\bf 62} (2011), 771--790.
\bibitem{BHM}
C.P. Boyer,  J.C. Hurtubise and  R.J. Milgram,
Stability theorems for spaces of rational curves, 
Int. J. Math. {\bf 12} (2001), 223-262.
\bibitem{BP}V.M. Buchstaber and T.E. Panov,
Torus actions and their applications in topology and combinatorics,
Univ. Lecture Note Series {\bf 24}, Amer. Math. Soc. Providence, 2002.
\bibitem{CJS}
R.L. Cohen, J.D.S. Jones and  G.B. Segal,
Stability for holomorphic spheres and Morse Theory, 
Contemporary Math. {\bf 258} (2000), 87--106.
\bibitem{C}
D.A. Cox,
The functor of a smooth toric variety, Tohoku Math. J. {\bf 47} (1995), 251--262.
\bibitem{CLS}
D.A. Cox, J.B. Little and H.K. Schenck,
Toric varieties, Graduate Studies in Math. {\bf 124}, Amer. Math. Soc., 2011.
\bibitem{GM}M. Goresky and R. MacPherson,
Stratified Morse theory,
A Series of Modern Surveys in Math.,
Springer-Verlag, 1980. 
\bibitem{Gu2}M.A. Guest,
The topology of the space of rational curves on a toric variety,
Acta Math. {\bf 174} (1995), 119--145.
\bibitem{GKY2}
M.A. Guest, A. Kozlowski and K. Yamaguchi,
Spaces of polynomials with roots of bounded
multiplicity,
Fund. Math. {\bf 116} (1999), 93--117.
\bibitem{Ha}R. Hartshorne,
Algebraic geometry,
Graduate Texts in Math. {\bf 52}, 
Springer-Verlag, 1977.
\bibitem{KY1}A. Kozlowski and K. Yamaguchi,
Topology of complements of discriminants and resultants, 
J. Math. Soc. Japan
{\bf 52} (2000), 949--959.
\bibitem{KY3}A. Kozlowski and K. Yamaguchi,
Spaces of algebraic maps 
from real projective spaces into
complex projective spaces, 
Contemporary Math. {\bf 519} (2010), 145--164.
\bibitem{KY4}A. Kozlowski and K. Yamaguchi,
Simplicial resolutions and spaces of algebraic maps between real
projective spaces, Topology Appl. {\bf 160} (2013), 87--98.
\bibitem{KY5}A. Kozlowski and K. Yamaguchi,
Spaces of equivariant algebraic maps
from real projective spaces into
complex projective spaces,
RIMS K$\hat{\mbox{o}}$ky$\hat{\mbox{u}}$roku Bessatsu, {\bf B39} (2013), 51--61.
\bibitem{KOY2}
A. Kozlowski and K. Yamaguchi,
Spaces of algebraic maps from real projective spaces to real toric varieties,
in preparation.
 
\bibitem{Mo2}J. Mostovoy,
Spaces of rational maps and the Stone-Weierstrass Theorem, 
Topology {\bf 45} (2006), 281--293.
\bibitem{Mo3}J. Mostovoy,
Truncated simplicial resolutions and spaces of rational maps,
Quart. J. Math. {\bf 63} (2012), 181--187.
\bibitem{MV}J. Mostovoy and E. Munguia-Villanueva,
Spaces of morphisms from a projective space to a toric variety,
preprint (arXiv:1210.2795).
\bibitem{Pa1}T.E. Panov,
Geometric structures on moment-angle manifolds,
Russian Math. Surveys {\bf 68} (2013), 503--568.
\bibitem{Se}G.B. Segal,
The topology of spaces of rational functions,
Acta Math. {\bf 143} (1979), 39--72.
\bibitem{Va}V.A. Vassiliev,
Complements of discriminants of smooth maps, 
Topology and Applications, 
Amer. Math. Soc.,
Translations of Math. Monographs \textbf{98},
1992 (revised edition 1994).





\end{thebibliography}
\end{document}